\numberwithin{equation}{section}
\newtheorem{thm}{Theorem}
\newtheorem{corr}[thm]{Corollary}
\newtheorem{lem}[thm]{Lemma}
\newtheorem{prop}[thm]{Proposition}
\newtheorem{conj}[thm]{Conjecture}
\newtheorem{ques}[thm]{Question}
\theoremstyle{definition}
\theoremstyle{remark}
\newtheorem{rem}[thm]{Remark}
\newtheorem*{ack}{Acknowledgment}
\def\R{\mathbb R}
\def\H{\mathbb H}
\def\SS{\mathbb S}
\def\pt{\partial}
\begin{document}
\title[Scaling inequalities for Steklov eigenvalues]{Scaling inequalities for Steklov eigenvalues in space forms and sharp eigenvalue estimates on warped product manifolds}
\author[Z.~Lv]{Zongyi~Lv}
\address{School of Mathematics, Sichuan University, Chengdu 610065, Sichuan,  P.~R.~China}
\email{\href{mailto:15828971863@163.com}{15828971863@163.com}}
\author[C.~Xiong]{Changwei~Xiong}
\address{School of Mathematics, Sichuan University, Chengdu 610065, Sichuan,  P.~R.~China}
\email{\href{mailto:changwei.xiong@scu.edu.cn}{changwei.xiong@scu.edu.cn}}
\author[Y.~Zou]{Yuxun~Zou}
\address[a]{School of Mathematical Sciences, University of Chinese Academy of Sciences, Beijing 100049, P.~R.~China;}
\address[b]{Institute of Applied Mathematics, Academy of Mathematics and Systems Science, Chinese Academy of Sciences, Beijing 100190, P.~R.~China}
\email{\href{mailto:zouyuxun25@mails.ucas.ac.cn}{zouyuxun25@mails.ucas.ac.cn}}

\date{\today}
\subjclass[2020]{{35P15}, {58J50}, {53C21}}
\keywords{Scaling inequality; Steklov eigenvalue; eigenvalue estimate; geodesic ball; warped product manifold}

\maketitle

\begin{abstract}
In the first part, we derive monotonicity of the normalized spectra for the second-order Steklov problem and two fourth-order Steklov problems on the $2$-dimensional geodesic disks with respect to the geodesic radius in the sphere and the hyperbolic space. The normalizations are made using four natural geometric factors. As corollaries, we get Escobar-type bounds for Steklov eigenvalues on $2$-dimensional geodesic disks with varying curvature in space forms. We also get two monotonicity results for higher-dimensional cases. In the second part, we obtain some sharp bounds concerning the spectra of the two fourth-order Steklov problems on warped product manifolds with non-negative Ricci curvature and a strictly convex boundary. In particular, we confirm Qiaoling Wang and Changyu Xia's conjecture (2018) on the sharp lower bound of the first non-zero eigenvalue of a fourth-order Steklov problem in the case of $3$-dimensional warped product manifolds.
\end{abstract}

\section{Introduction}\label{sec1}

One of the most extensively-investigated topics in the spectral geometry is the estimates on various eigenvalues. In the first part of this work we are concerned with the scaling properties of the Steklov eigenvalues, which may lead to estimates on eigenvalues. The scaling properties of eigenvalues in the Euclidean space are well-known in the literature. For example, for a bounded Euclidean domain $M\subset \R^n$, all its Dirichlet eigenvalues $\lambda(M)$ satisfy
\begin{align*}
t^2\lambda(tM)=\lambda(M),\quad t>0.
\end{align*}
In comparison, this kind of scaling properties is less known in the curved spaces such as the spherical and hyperbolic spaces. In \cite{LL23b} Langford and Laugesen studied the scaling properties for Dirichlet and Neumann eigenvalues in the spherical and hyperbolic spaces, mainly in the $2$-dimensional cases. Here we intend to investigate the scaling properties for three kinds of Steklov eigenvalues in these curved spaces. For $2$-dimensional cases, we can get a quite complete description; and interestingly for certain higher-dimensional cases we are also able to get monotonicity results. In the second part of this work, we consider the more general context of warped product manifolds and derive some sharp results of the fourth-order Steklov eigenvalues on such manifolds with non-negative Ricci curvature and a strictly convex boundary.

The motivation of our investigation in this work includes not only the scaling properties of Steklov eigenvalues in curved spaces, but also fine estimates of Steklov eigenvalues in general Riemannian manifolds. In other words, we will obtain estimates of eigenvalues on general Riemannian manifolds with boundary, provided that comparison between them and those on model spaces such as the warped product manifolds with rotationally symmetric metrics can be established (e.g., the comparison results \cite[Thms.~1 and 2]{Esc00}), and meanwhile nice estimates of eigenvalues on these model spaces are derived. Therefore, to estimate eigenvalues properly on model spaces is one of the main steps to estimate them on general Riemannian manifolds. In fact, for well-known Dirichlet and Neumann eigenvalues, considerable interest has been devoted to getting satisfactory estimates for them on manifolds with rotationally symmetric metrics; see, e.g., \cite{Har25,Ber23,Kri22,BF17,Bag90,Bag91,BCG83,Art16,BB06,Cam94,DGM81,FH76,Gag80,Gri99,HMP16,Kat03,MT82,McK70,Pin81,Pin94,Pin78,Pin79,Sat82,Sav09,Wan95} and references therein. Hence it is natural to carry out similar studies for the Steklov eigenvalues.

Let $(M^n,g)$ be an $n$-dimensional smooth compact Riemannian manifold with boundary $\pt M$. We consider the following three types of Steklov eigenvalue problems
\begin{equation}\label{problem1}
	\begin{cases}
		\Delta u =0,&\text{ in }  M, \\
		\dfrac{\partial u }{\partial \nu }=\sigma u, &\text{ on }  \partial M,
	\end{cases}
\end{equation}
\begin{equation}\label{problem2}
	\begin{cases}
		\Delta ^{2}u =0,&\text{ in }  M, \\
		\dfrac{\partial u }{\partial \nu }=0, \ \dfrac{\partial \left ( \Delta u  \right )}{\partial \nu }+\xi u =0,&\text{ on }  \partial M,
	\end{cases}
\end{equation}
and
\begin{equation}\label{problem3}
	\begin{cases}
		\Delta ^{2} u =0,&\text{ in }  M, \\
		u =0, \ \Delta u =\eta \dfrac{\partial u }{\partial \nu },&\text{ on }  \partial M,
	\end{cases}
\end{equation}
where $\nu$ denotes the outward unit normal to $\partial M$. The problem \eqref{problem1} was introduced by Steklov \cite{Ste02} around 1900; see \cite{CGGS24,GP17,KKK14} for nice surveys and historical remarks on the second-order Steklov problem. The problems \eqref{problem2} and \eqref{problem3} were studied first by J.~R.~Kuttler and V.~G.~Sigillito \cite{KS68} in 1968 and by L.~E.~Payne \cite{Pay70} in 1970; see, e.g., \cite{BK22} for introduction on generalized fourth-order Steklov eigenvalue problems. See also Section~\ref{subsec2.3} for some introduction on these Steklov problems. In particular, we use $\sigma_m$, $\xi_m$ and $\eta_m$ starting from $m=0$ to denote the eigenvalues. Thus $\sigma_0=0$ and $\xi_0=0$. Moreover, we use $\sigma_{(m)}$, $\xi_{(m)}$ and $\eta_{(m)}$ to denote the eigenvalues without counting multiplicity.

In this work we are mainly concerned with the warped product Riemannian manifold $M^n=[0,R]\times \SS^{n-1}$ equipped with the warped product metric
\begin{align*}
g=dr^2+h^2(r)g_{\SS^{n-1}}.
\end{align*}
Throughout this paper we impose the following Assumption (A) on the warping factor $h(r)$.
\begin{itemize}
  \item[(A)] $h\in C^\infty([0,R])$, $h(r)>0$ for $r\in (0,R]$, $h'(0)=1$ and $h^{(2k)}(0)=0$ for all integers $k\geq 0$.
\end{itemize}
In particular, when $h(r)=\sin r$, $M$ is in the sphere; when $h(r)=\sinh r$, $M$ is in the hyperbolic space. See Section~\ref{subsec2.1} for more details on the setting. Besides, by the results in Section~\ref{subsec2.3}, the Steklov eigenvalues on warped product manifolds generally admit multiplicity. Thus we only need to study the eigenvalues $\sigma_{(m)}$, $\xi_{(m)}$ and $\eta_{(m)}$ without counting multiplicity.

Next we discuss the geometric factors for scaling in the sphere and the hyperbolic space. First note that the Steklov problems above admit the following scaling properties:
\begin{align*}
\sigma_k((M,c^2g))&=c^{-1}\sigma_k((M,g)), \ c>0,\\
\xi_k((M,c^2g))&=c^{-3}\xi_k((M,g)), \ c>0,\\
\eta_k((M,c^2g))&=c^{-1}\eta_k((M,g)), \ c>0.
\end{align*}
For the $2$-dimensional geodesic disk in the sphere ($h(r)=\sin r$), there are four natural geometric factors (see Figure~\ref{Fig1}): the geodesic radius (or the aperture) $R$, the Euclidean radius of the boundary circle $\sin R$, the stereographic radius $\tan (R/2)$, and the area $4\pi \sin^2 (R/2)$. Therefore in view of the scaling properties above it is natural to consider the monotonicity with respect to $R$ of the following quantities
\begin{align*}
\sigma_{(m)} R,\quad \sigma_{(m)} \sin R,\quad \sigma_{(m)} \tan (R/2),\quad \sigma_{(m)} \sin (R/2),\\
\xi_{(m)} R^3,\quad \xi_{(m)} \sin^3 R,\quad \xi_{(m)} \tan^3 (R/2),\quad \xi_{(m)} \sin^3 (R/2),\\
\eta_{(m)} R,\quad \eta_{(m)} \sin R,\quad \eta_{(m)} \tan (R/2),\quad \eta_{(m)} \sin (R/2).
\end{align*}
\begin{figure}
\centering
\begin{tikzpicture}[scale=3.2]

 \draw [<-] (0,1.3)node [right] {$z$}--(0,-1.2) ;
 \draw [->] (-1.2,0)--(1.2,0)node [right] {$x$};
 \node [above left] at (0,0) {$0$};
 \draw [dashed] (0,0)--(0.6,0.8);
 \draw [-] (-0.6,0.8) to [out=-20, in=200] (0.6,0.8);
 \draw [dashed] (-0.6,0.8) to [out=20, in=160] (0.6,0.8);
 \node [above] at (0.3,0.954) {$R$};
 \draw [dashed] (0,0.8) --(0.6,0.8);
 \draw [<-] (0.25,0.83)--(0.5,1.0) node [above right] {$\sin R$};
 \draw [dashed] (0,-1)--(0.6,0.8);
 \draw [<-] (0.16,-0.05) -- (0.4,-0.3) node [right] {$\tan (R/2)$};
 \draw (0,0.15) node [above right] {$R$} to [out=0, in=145] (0.09,0.12);
 \node [above left] at (0,1) {$\mathrm{Area}=4\pi \sin^2 (R/2)$};
 \node [above right] at (1,0) {$1$};
 \node [above right] at (0,1) {$N$};
 \node [below right] at (0,-1) {$S$};

  \draw (0,0) circle (1);
\end{tikzpicture} 
\caption{Geometric factors in $\SS^2$}
\label{Fig1}
\end{figure}
In the hyperbolic space ($h(r)=\sinh r$), we may consider similar problems concerning natural geometric factors (see Section~\ref{subsec2.2}).

In the remaining part of Section~\ref{sec1}, we will present and discuss our main results. In Sections \ref{subsec1.1}--\ref{subsec1.3} we shall consider the $2$-dimensional space forms, give a complete description of the scaling properties of the Steklov eigenvalues, and as corollaries present Escobar-type monotonicity results for these Steklov eigenvalues on geodesic disks with varying curvature. Then in Section~\ref{subsec1.4} we move to the higher-dimensional space forms and give two monotonicity results. Last in Section~\ref{subsec1.5} we discuss some sharp estimates on warped product manifolds.
\begin{rem}
Analogous results for Dirichlet and Neumann eigenvalues to those in Sections~\ref{subsec1.1}--\ref{subsec1.4} were derived by Langford and Laugesen in \cite{LL23b}. More precisely, they studied on $2$-dimensional geodesic disks in space forms the monotonicity of the full Dirichlet and Neumann spectra normalized by the stereographic radius and the Euclidean radius of the boundary circle, the second Neumann eigenvalue normalized by the area, the first and second Dirichlet eigenvalues normalized by the geodesic radius. As corollaries, they obtained Bandle-type bounds for Neumann eigenvalues \cite{Ban72} \cite[Cor.~3.9]{Ban80} and Cheng-type bounds for Dirichlet eigenvalues \cite{Che75} on geodesic disks with varying curvature. More recently, Harman \cite{Har25} studied scaling inequalities for Robin and Dirichlet eigenvalues.
\end{rem}
\begin{rem}
For the results in Section~\ref{subsec1.5}, the original motivation to get them is to study problems similar to the Escobar's conjecture \cite[p.~115]{Esc99}. See \cite{Esc97,Xio22,Xio21,XX24,Xio25} for the detailed motivation and related open problems.
\end{rem}


\subsection{Scaling inequalities in the $2$-dimensional sphere}\label{subsec1.1}

For the $2$-dimensional sphere, we have nice expressions for $\sigma_{(m)}$, $\xi_{(m)}$ and $\eta_{(m)}$. Precisely, we have (see, e.g., \cite[Prop.~4]{Esc00}\cite[(1.3)]{Xio21})
\begin{align*}
\sigma_{(m)}&=\frac{m}{\sin R}.
\end{align*}
And we have $\xi_{(m)}$ and $\eta_{(m)}$ as in Lemma~\ref{lem2} below in the sphere. Using these explicit expressions, we are able to get a quite complete description on the scaling inequalities in the $2$-dimensional sphere.

First consider the second-order Steklov problem. Since our main purpose of this work is to estimate eigenvalues and for the $2$-dimensional case we have already known the explicit expression for $\sigma_{(m)}=m/\sin R$, there is no need to derive the monotonicity properties with respect to $R$ of $\sigma_{(m)}R$, $\sigma_{(m)}\sin{R}$, $\sigma_{(m)}\tan{(R/2)}$, and $\sigma_{(m)}\sin{(R/2)}$. Just for comparison purpose it is straightforward to check that on $(0,\pi)$ the function $\sigma_{(m)}R$ is strictly increasing, $\sigma_{(m)}\sin{R}$ is constant, $\sigma_{(m)}\tan{(R/2)}$ is strictly increasing, and $\sigma_{(m)}\sin{(R/2)}$ is also strictly increasing.


The non-trivial cases are for the two fourth-order Steklov eigenvalue problems. Next we give results for $\xi_{(m)}$ of the type one fourth-order Steklov eigenvalue problem \eqref{problem2}.
\begin{thm}\label{thm2}
For the eigenvalue $\xi_{(m)}$ $(m \geq 1)$ of the geodesic disk of radius $R$ in the $2$-dimensional sphere, we have the following conclusions.
\begin{enumerate}
  \item The function $\xi_{(m)}R^{3}$ is strictly increasing on $(0,\pi)$.
  \item The function $\xi_{(m)}\sin^3{R}$ is strictly decreasing on $(0,\pi)$.
  \item The function $\xi_{(m)}\tan^3{(R/2)}$ is strictly increasing on $(0,\pi)$.
  \item The function $\xi_{(m)}\sin^3{(R/2)}$ is strictly increasing on $(0,\pi)$.
\end{enumerate}
\end{thm}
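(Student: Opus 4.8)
The proof will rest on the closed-form expression for $\xi_{(m)}$ on the $2$-dimensional geodesic disk in $\SS^{2}$ supplied by Lemma~\ref{lem2}. Separating variables $u=f(r)e^{im\theta}$ in \eqref{problem2}, $\Delta u$ must be a constant multiple of the regular harmonic function $(\tan(r/2))^{m}e^{im\theta}$, and solving the resulting inhomogeneous radial ODE and imposing the two boundary conditions yields an elementary formula for $\xi_{(m)}$. The plan is to work in the variable $w=\tan^{2}(R/2)$, which is a strictly increasing bijection of $(0,\pi)$ onto $(0,\infty)$ (this is the stereographic picture, in which the disk is a Euclidean disk of radius $\tan(R/2)$); after simplification the formula takes the shape
\[
\xi_{(m)}=\frac{m^{2}(1+w)\,w^{\,m-1/2}}{4\,\Psi_{m}(w)},\qquad
\Psi_{m}(w):=\int_{0}^{w}\frac{t^{m}}{(1+t)^{2}}\,dt ,
\]
so in particular $\Psi_{m}(0)=0$ and $\Psi_{m}'(w)=w^{m}/(1+w)^{2}$. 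Each of the four conclusions thereby becomes a monotonicity statement in $w$.

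I would first reduce parts (1) and (3) to part (4). Indeed
\[
\xi_{(m)}\tan^{3}(R/2)=\big(\xi_{(m)}\sin^{3}(R/2)\big)\,\sec^{3}(R/2),\qquad
\xi_{(m)}R^{3}=\big(\xi_{(m)}\sin^{3}(R/2)\big)\Big(\tfrac{R}{\sin(R/2)}\Big)^{3},
\]
and on $(0,\pi)$ both $\sec(R/2)$ and $R/\sin(R/2)$ are positive and strictly increasing: the first is clear, and for the second the numerator $\sin(R/2)-\tfrac{R}{2}\cos(R/2)$ of its derivative is positive because $x\mapsto\sin x-x\cos x$ vanishes at $0$ and has derivative $x\sin x>0$ on $(0,\pi/2)$. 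Since a product of positive strictly increasing functions is strictly increasing, (4) implies (1) and (3).

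It remains to prove (2) and (4) from the formula. Rewriting the relevant quantities in $w$ and computing logarithmic derivatives, one finds that (2) is equivalent to
\[
\Psi_{m}(w)<\frac{w^{m+1}}{(1+w)\big[(m+1)+(m-1)w\big]}=:A(w),\qquad w>0,
\]
while (4) is equivalent to
\[
\Psi_{m}(w)>\frac{2\,w^{m+1}}{(1+w)\big[2(m+1)+(2m+1)w\big]}=:C(w),\qquad w>0 .
\]
Both $\Psi_{m}-A$ and $\Psi_{m}-C$ vanish at $w=0$, so it suffices to pin down the sign of their derivatives on $(0,\infty)$. Using $\Psi_{m}'=w^{m}/(1+w)^{2}$ and differentiating the rational functions $A,C$, the inequalities $\Psi_{m}'<A'$ and $\Psi_{m}'>C'$ become, after clearing the (positive) denominators, the positivity on $(0,\infty)$ of two explicit polynomials in $w$ whose coefficients are polynomials in $m\ge1$. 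Establishing this positivity uniformly in $m$ is the step I expect to be the main technical point; I anticipate that the resulting polynomials have only nonnegative coefficients (so the inequalities are in fact strict for $w>0$), which then gives $\Psi_{m}<A$ and $\Psi_{m}>C$, i.e.\ (2) and (4). A minor companion point is checking that $(1+w)[(m+1)+(m-1)w]$, $(1+w)[2(m+1)+(2m+1)w]$ and $\Psi_{m}(w)$ are all positive for $w>0$, $m\ge1$, which is what makes the logarithmic-derivative manipulations legitimate.

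Strictness on all of $(0,\pi)$ is then automatic, since the polynomial inequalities are strict for $w>0$; as a sanity check one notes the Euclidean-limit values $\xi_{(m)}R^{3}\to 2m^{2}(m+1)$ and $\xi_{(m)}\sin^{3}R\to 2m^{2}(m+1)$ as $R\to 0^{+}$, consistent with (1) being increasing and (2) being decreasing.
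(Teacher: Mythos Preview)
Your plan is correct and, once the polynomial computation is carried out, yields a complete proof. The substitution $w=\tan^{2}(R/2)$ and the formula $\xi_{(m)}=m^{2}(1+w)w^{m-1/2}/(4\Psi_{m}(w))$ are right, and the equivalences with $\Psi_{m}<A$ and $\Psi_{m}>C$ are set up correctly (the denominators $(m+1)+(m-1)w$ and $2(m+1)+(2m+1)w$ are indeed positive for $m\ge 1$, so the logarithmic-derivative manipulations are legitimate). Your anticipation about the polynomials is also borne out: after clearing denominators one finds
\[
A'(w)-\Psi_{m}'(w)=\frac{2\,w^{m+1}}{(1+w)^{2}\big[(m+1)+(m-1)w\big]^{2}},\qquad
\Psi_{m}'(w)-C'(w)=\frac{w^{m+1}\big[(6m+4)+(6m+3)w\big]}{(1+w)^{2}\big[2(m+1)+(2m+1)w\big]^{2}},
\]
both strictly positive for $w>0$, so you should simply record these identities rather than leave the step as an expectation.

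Your route differs genuinely from the paper's. The paper exploits the identity $\xi_{(m)}\sin^{2}R=m^{2}\eta_{(m)}$ to reduce parts (1)--(3) of this theorem to the corresponding monotonicity statements for $\eta_{(m)}$ in Theorem~\ref{thm3}, and proves part~(4) by a chain of auxiliary functions $q,q_{1},q_{2}$ in the variable $R$. In particular, the paper's proof of (1) rests on Theorem~\ref{thm3}(1) (monotonicity of $\eta_{(m)}R$), which is the longest argument in that section. Your approach is self-contained: the stereographic change of variable rationalizes everything, (1) and (3) become immediate corollaries of (4), and (2) and (4) reduce to two one-line polynomial checks. What you lose is the structural link to the $\eta$-spectrum; what you gain is a shorter and more transparent computation that does not depend on Theorem~\ref{thm3} at all.
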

\begin{rem}
As $R\to 0+$, the function $\xi_{(m)}R^3$ converges to the $m$th Steklov eigenvalue on the Euclidean unit disk, $2m^2(m+1)$. Consequently, by (2) and (4) of Theorem~\ref{thm2}, we get
\begin{align*}
\xi_{(m)}\sin^3 R<2m^2(m+1)<8\xi_{(m)}\sin^3 \frac{R}{2},\quad R\in (0,\pi),
\end{align*}
or
\begin{align*}
\frac{m^2(m+1)}{4\sin^3 \frac{R}{2}}<\xi_{(m)}<\frac{2m^2(m+1)}{\sin^3 R},\quad R\in (0,\pi).
\end{align*}
Thus we get a two-sided estimate for $\xi_{(m)}$. Similar arguments apply to Theorems~\ref{thm3}, \ref{thm5} and \ref{thm6}.
\end{rem}

Last we get results for $\eta_{(m)}$ of the type two fourth-order Steklov eigenvalue problem \eqref{problem3}.
\begin{thm}\label{thm3}
For the eigenvalue $\eta_{(m)}$ $(m \geq 1)$ of the geodesic disk of radius $R$ in the $2$-dimensional sphere, we have the following conclusions.
\begin{enumerate}
  \item The function $\eta_{(m)}R$ is strictly increasing on $(0,\pi)$.
  \item The function $\eta_{(m)}\sin{R}$ is strictly decreasing on $(0,\pi)$.
  \item The function $\eta_{(m)}\tan{(R/2)}$ is strictly increasing on $(0,\pi)$.
  \item For $m=1$, there exists an $R_1\in (0,\pi)$, such that the function $\eta_{(1)} \sin (R/2)$ is strictly decreasing on $(0,R_1)$ and strictly increasing on $(R_1,\pi)$; while for $m\geq 2$, the function $\eta_{(m)}\sin{(R/2)}$ is strictly increasing on $(0,\pi)$.
\end{enumerate}
\end{thm}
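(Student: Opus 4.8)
The plan is to argue from the closed-form expression for $\eta_{(m)}$ in the $2$-sphere recorded in Lemma~\ref{lem2}. Introduce the variable $u=\tan^{2}(R/2)$, so that $R\mapsto u$ is an increasing diffeomorphism of $(0,\pi)$ onto $(0,\infty)$; with this substitution the formula of Lemma~\ref{lem2} reads
\[
\eta_{(m)}=\frac{u^{m+1/2}}{(1+u)\,I_{m}(u)},\qquad I_{m}(u):=\int_{0}^{u}\frac{s^{m}}{(1+s)^{2}}\,ds ,
\]
so that $I_{m}'(u)=u^{m}(1+u)^{-2}$. The four products in the statement become elementary functions of $u$:
\begin{align*}
\eta_{(m)}R&=\frac{2u^{m+1/2}\arctan\sqrt u}{(1+u)I_{m}(u)},\\
\eta_{(m)}\sin R&=\frac{2u^{m+1}}{(1+u)^{2}I_{m}(u)},\\
\eta_{(m)}\tan\tfrac{R}{2}&=\frac{u^{m+1}}{(1+u)I_{m}(u)},\\
\eta_{(m)}\sin\tfrac{R}{2}&=\frac{u^{m+1}}{(1+u)^{3/2}I_{m}(u)} .
\end{align*}
Since $u=u(R)$ is increasing, in each case it suffices to determine the sign of the $u$-derivative; I would read this off from the logarithmic derivative and, after multiplying through by the manifestly positive quantity $u(1+u)^{2}I_{m}(u)$, reduce the question to the sign of an explicit auxiliary function $N(u)$.

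For (2), (3) and (4) with $m\ge2$ this $N$ has the shape $N(u)=(a+bu)(1+u)I_{m}(u)-cu^{m+1}$ with $a,c>0$ and $b\ge0$ depending on $m$ (for instance $a=b+1=m+1$, $c=1$ in (3)), and the claim amounts to $N>0$ on $(0,\infty)$ (respectively $N<0$ in (2)). I would prove this by a ladder of derivatives: $N$ together with a few of its derivatives vanish at $u=0$ (the order of vanishing grows with $m$), while $I_{m}$ and all its derivatives of order $\ge1$ are rational, so differentiating $N$ a fixed small number of times ($1$ more than the degree of its polynomial factor) removes the $I_{m}$-term altogether and leaves an inequality between rational functions, i.e.\ a polynomial positivity statement; integrating back up from $u=0$ then recovers the sign of $N$. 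Two subtleties arise: in (4) with $m=2$ the leading Taylor coefficient of $N$ at $u=0$ vanishes, so one must carry the expansion one term further to see that $N>0$ near the origin; and in (1) the logarithmic derivative also contributes the term $\bigl((1+u)\arctan\sqrt u\bigr)^{-1}$, which cannot be discarded -- the polynomial part of the associated $N$ cancels $u^{m+1}$ to top order at $u=0$ -- so one must retain that term and estimate $\arctan$ from below.

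The genuinely subtle case is (4) with $m=1$, where $I_{1}(u)=\ln(1+u)-u/(1+u)$ and the auxiliary function is $N(u)=(4+u)(1+u)I_{1}(u)-2u^{2}$; one must show $N$ has exactly one zero $u_{1}$ in $(0,\infty)$, with $N<0$ on $(0,u_{1})$ and $N>0$ on $(u_{1},\infty)$. Running the ladder in the opposite direction, one computes $N(0)=N'(0)=N''(0)=0$ together with
\[
N''(u)=2\ln(1+u)-\frac{3u}{1+u},\qquad N'''(u)=\frac{2u-1}{(1+u)^{2}} .
\]
Hence $N'''$ changes sign exactly once, at $u=\tfrac12$; therefore $N''$, which vanishes at $0$ and tends to $+\infty$, first decreases and then increases and so has a unique positive zero; therefore $N'$, which vanishes at $0$ and tends to $+\infty$, has a unique positive zero; therefore $N$, which vanishes at $0$, is negative immediately to its right, and tends to $+\infty$, has a unique positive zero $u_{1}$. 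Translating through $R=2\arctan\sqrt u$ yields $R_{1}=2\arctan\sqrt{u_{1}}$ and the asserted monotonicity. I expect this $m=1$ case, together with making the rational-positivity step uniform in $m$ for (1)--(3), handling the $m=2$ borderline in (4), and controlling the $\arctan$-contribution in (1), to be where almost all the work lies; the change of variables and the reduction to the functions $N(u)$ are routine.
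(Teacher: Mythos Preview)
Your substitution $u=\tan^2(R/2)$ is a genuinely different route from the paper, which works throughout in the variable $R$. For (2), (3) and (4) your outline is correct and in several places cleaner than the paper's: after two or three $u$-derivatives the $I_m$-coefficient reduces to a constant and the sign is manifest (for instance $N''(u)=2mI_m(u)+mu^m/(1+u)>0$ in (3), and $N'''(u)=-2mu^{m-1}/(1+u)^2<0$ in (2)), while your explicit ladder for $I_1(u)=\ln(1+u)-u/(1+u)$ handles the unimodal $m=1$ case of (4) correctly. By contrast the paper deduces (3) from (1) via the monotone factor $\tan(R/2)/R$, and treats (2), (4) by a single differentiation in $R$ together with integration by parts.

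The gap is part (1) for $m=1$. Your own framework shows why: the crude bound $\arctan x<x$ (i.e.\ $\sqrt u/\arctan\sqrt u>1$) turns the expression governing (1) into exactly $2N_{(4)}(u)$, which you have just shown is negative on an initial interval when $m=1$. So that reduction succeeds for $m\ge2$ but fails precisely where it matters. To extract more from the $\arctan$-term you would need a sharper \emph{upper} bound on $\arctan$ (your phrase ``estimate $\arctan$ from below'' points the wrong way), and no standard two-sided estimate is tight enough to absorb the $O(u^3)$ deficit of $N_{(4)}$ near $u=0$ while remaining valid on all of $(0,\infty)$. The paper handles (1) for every $m\ge1$ by a substantially longer argument directly in $R$: it first shows that
\[
J(R)=(2m+\cos R)\,M(R)-(\tan\tfrac{R}{2})^{2m}\sin^2 R
\]
changes sign exactly once, and then on the interval where $J<0$ introduces a further chain $F_1,F_2,F_3$ of auxiliary functions, the decisive sign coming from $F_3'(R)<0$ via the elementary inequality $2m\cos R+1-3m^2<0$. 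Your proposal does not supply any analogue of this step, so the $m=1$ case of (1) remains unproved.
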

\begin{rem}
For $m=0$, by Lemma ~\ref{lem2} below we see that $\eta_{(0)}=\cot(R/2)$. Then it is straightforward to check that on $(0,\pi)$ the function $\eta_{(0)}R$ is strictly decreasing, $\eta_{(0)}\sin{R}$ is strictly decreasing, $\eta_{(0)}\tan{(R/2)}$ is constant, and $\eta_{(0)}\sin{(R/2)}$ is strictly decreasing.
\end{rem}

\subsection{Scaling inequalities in the $2$-dimensional hyperbolic space}\label{subsec1.2}

As in the sphere, for the second-order Steklov eigenvalue problem in the $2$-dimensional hyperbolic space we have explicitly (see, e.g., \cite[Prop.~4]{Esc00}\cite[(1.3)]{Xio21})
\begin{align*}
\sigma_{(m)}=\frac{m}{\sinh R},
\end{align*}
and for the fourth-order problems we have explicit expressions for $\xi_{(m)}$ and $\eta_{(m)}$ as in Lemma~\ref{lem3} below. Then we may check directly that on $(0,+\infty)$ the function $\sigma_{(m)}R$ is strictly decreasing, the function $\sigma_{(m)}\sinh{R}$ is constant, the function $\sigma_{(m)}\tanh(R/2)$ is strictly decreasing, and the function $\sigma_{(m)}\sinh(R/2)$ is strictly decreasing.

Next, we turn to the non-trivial problems, the two fourth-order eigenvalue problems. First we have the following result for the problem \eqref{problem2}.
\begin{thm}\label{thm5}
For the eigenvalue $\xi_{(m)}$ $(m \geq 1)$  of the geodesic disk of radius $R$ in the $2$-dimensional hyperbolic space, we have the following conclusions.
\begin{enumerate}
  \item The function $\xi_{(m)}R^3$ is strictly decreasing on $(0,+\infty)$.
  \item The function $\xi_{(m)}\sinh^3R$ is strictly increasing on $(0,+\infty)$.
  \item The function $\xi_{(m)}\tanh^3(R/2)$ is strictly decreasing on $(0,+\infty)$.
  \item The function $\xi_{(m)}\sinh^3(R/2)$ is strictly decreasing on $(0,+\infty)$.
\end{enumerate}
\end{thm}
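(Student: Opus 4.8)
The starting point is an explicit description of $\xi_{(m)}(R)$, as recorded in Lemma~\ref{lem3} below; concretely, separating variables in \eqref{problem2} for a mode-$m$ solution $u=(A\,\phi_m(r)+B\,\psi_m(r))\cos(m\theta)$, where $\phi_m(r)=\tanh^m(r/2)$ is the bounded mode-$m$ harmonic function on the geodesic disk and $\psi_m(r)$ is a solution, regular at the origin, of $\Delta(\psi_m\cos m\theta)=\phi_m\cos m\theta$, the boundary conditions of \eqref{problem2} force $\xi_{(m)}(R)=\phi_m'(R)^2/W(R)$ with $W:=\psi_m'\phi_m-\psi_m\phi_m'$. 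The radial ODE yields the two identities $(\sinh r\,W)'=\sinh r\,\phi_m^2$ and $\sinh r\,\phi_m'(r)=m\,\phi_m(r)$; hence, writing $I(R):=\int_0^R\sinh s\,\phi_m(s)^2\,ds$, one obtains the workable form
\[
\xi_{(m)}(R)=\frac{\sinh R\,\phi_m'(R)^2}{I(R)}=\frac{m^2\,\phi_m(R)^2}{\sinh R\;I(R)},\qquad I'=\sinh R\,\phi_m^2,\qquad I''=(\cosh R+2m)\phi_m^2 .
\]
As a sanity check, $\xi_{(m)}(R)R^3\to 2m^2(m+1)$ as $R\to 0^+$, the Euclidean value.

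Next, for $f\in\{R,\ \sinh R,\ \tanh(R/2),\ \sinh(R/2)\}$, a direct computation using these identities gives
\[
\frac{d}{dR}\log\bigl(\xi_{(m)}(R)\,f(R)^3\bigr)=-\coth R+\frac{2m}{\sinh R}-\frac{\sinh R\,\phi_m^2}{I}+3\,\frac{f'}{f},
\]
with $f'/f$ equal to $1/R$, $\coth R$, $1/\sinh R$, $\tfrac12\coth(R/2)$, respectively. Multiplying by the positive quantity $\sinh R\,I$ (times an extra $R$ in case (1), an extra $2$ in case (4)) and using $\coth(R/2)=(\cosh R+1)/\sinh R$, conclusions (1)--(4) become the assertions that on $(0,\infty)$
\begin{align*}
Q &:= R\sinh^2 R\,\phi_m^2-(3\sinh R-R\cosh R+2mR)\,I>0,\\
J &:= (2\cosh R+2m)\,I-\sinh^2 R\,\phi_m^2>0,\\
M &:= \sinh^2 R\,\phi_m^2-(2m+3-\cosh R)\,I>0,\\
N &:= 2\sinh^2 R\,\phi_m^2-(\cosh R+4m+3)\,I>0,
\end{align*}
and each of $Q,J,M,N$ vanishes at $R=0$.

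To prove these I would differentiate and repeatedly use $I'=\sinh R\,\phi_m^2$ together with $\sinh R\,\phi_m'=m\phi_m$ to cancel all terms of indefinite sign. This collapses $J'$ and $M'$ to
\[
J'=2\sinh R\,I,\qquad M'=3\sinh R\,\phi_m^2(\cosh R-1)+\sinh R\,I,
\]
both manifestly positive, so (2) and (3) follow at once. Likewise $N'=\sinh R\,\bigl(3\phi_m^2(\cosh R-1)-I\bigr)$, whose bracket vanishes at $R=0$ and has derivative $6\phi_m\phi_m'(\cosh R-1)+2\sinh R\,\phi_m^2>0$ (using $\phi_m,\phi_m'>0$ on $(0,\infty)$); hence the bracket, then $N'$, then $N$ are positive, giving (4). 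The delicate case is $Q$: here $Q(0)=Q'(0)=0$, so one differentiation does not suffice, but a second differentiation — again powered by $\sinh R\,\phi_m'=m\phi_m$ — collapses $Q''$ to
\[
Q''=\Bigl[\tfrac32\bigl(2R\cosh(2R)-\sinh(2R)\bigr)+R\sinh^2 R+6m\bigl(R\cosh R-\sinh R\bigr)\Bigr]\phi_m^2+\bigl(R\cosh R-\sinh R\bigr)I ,
\]
and every summand is positive because $x\cosh x-\sinh x>0$ for $x>0$. Since $Q(0)=Q'(0)=0$, this gives $Q>0$, hence (1).

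The step I expect to be the main obstacle is case (1): one must first clear the $1/R$ coming from $f'/f$ to isolate the right auxiliary function $Q$, then recognize that two differentiations are needed, the crucial point being that only after using $\sinh R\,\phi_m'(R)=m\,\phi_m(R)$ to eliminate the indefinite-sign terms does $Q''$ break up into manifestly nonnegative pieces. I also note that all the positive coefficients appearing above remain positive for every integer $m\ge 1$, so the argument is uniform in $m$; and since each of the displayed derivatives is strictly positive on $(0,\infty)$, the monotonicity is strict. The four hyperbolic statements are the sign-reversed analogues of the spherical ones in Theorem~\ref{thm2}; in principle they could be deduced from the latter by analytic continuation of the Step-1 formula, but the direct route above seems cleaner and more self-contained.
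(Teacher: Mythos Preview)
Your proof is correct. The explicit formula you start from is precisely Lemma~\ref{lem3}, and the four auxiliary functions $J,M,N,Q$ you introduce are (up to harmless positive factors) exactly the ones that arise when one differentiates $\xi_{(m)}f^3$ directly; I checked your collapse of $J',M',N'$ and of $Q''$ and they are right, so all four monotonicities follow.

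The route differs from the paper's in how it handles (1) and (3). The paper first proves (4) (your $N>0$, essentially the same computation), and then observes that both $R/\sinh(R/2)$ and $\tanh(R/2)/\sinh(R/2)$ are strictly decreasing on $(0,\infty)$; writing $\xi_{(m)}R^3=\xi_{(m)}\sinh^3(R/2)\cdot\bigl(R/\sinh(R/2)\bigr)^3$ and similarly for $\tanh^3(R/2)$ gives (1) and (3) for free. (For (2) the paper uses $\xi_{(m)}\sinh^3 R=m^2\eta_{(m)}\sinh R$ and the already-proved Theorem~\ref{thm6}(2), but your direct $J'=2\sinh R\,I$ is just as short.) So what you gain is a completely self-contained argument that never invokes $\eta_{(m)}$ or Theorem~\ref{thm6}; what the paper gains is avoiding the second differentiation $Q''$ and the verification of $Q'(0)=0$ in case~(1), which is where most of your effort goes.
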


Second we have the result for the problem \eqref{problem3}.
\begin{thm}\label{thm6}
For the eigenvalue $\eta_{(m)}$ $(m \geq 1)$ of the geodesic disk of radius $R$ in the $2$-dimensional hyperbolic space, we have the following conclusions.
\begin{enumerate}
  \item When $m=1$, the function $\eta_{(m)}R$ is strictly increasing on $(0,+\infty)$; while for $m\geq 2$, there exists a positive constant $R_m$, which depends only on $m$, such that the function $\eta_{(m)} R$ is strictly decreasing on $(0, R_m)$, and strictly increasing on $(R_m,+\infty)$.
  \item The function $\eta_{(m)}\sinh R$ is strictly increasing on $(0,+\infty)$.
  \item The function $\eta_{(m)}\tanh(R/2)$ is strictly decreasing on $(0,+\infty)$.
  \item When $m \leq 2$, the function $\eta_{(m)}\sinh(R/2)$ is strictly increasing on $(0,+\infty)$; while for $m\geq 3$, there exists a positive constant $\bar{R}_m$, which depends only on $m$, such that the function $\eta_{(m)}\sinh(R/2)$ is strictly decreasing on $(0, \bar{R}_m)$, and strictly increasing on $(\bar{R}_m,+\infty)$.
\end{enumerate}
\end{thm}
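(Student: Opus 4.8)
The plan is to run everything off the explicit formula for $\eta_{(m)}$ on the hyperbolic geodesic disk supplied by Lemma~\ref{lem3} (which one also recovers directly: separating variables for \eqref{problem3} in the metric $dr^2+\sinh^2 r\,g_{\SS^1}$ in the angular mode $e^{im\theta}$, one builds the regular biharmonic solution from the regular harmonic radial function $\phi_m(r)=\tanh^m(r/2)$ and a regular particular solution $g_m$ of $L_m[g_m]=\phi_m$, where $L_m$ is the radial part of $\Delta$; the conditions $u|_{r=R}=0$ and $\Delta u=\eta\,\partial_\nu u$ at $r=R$ collapse to
\begin{align*}
\eta_{(m)}(R)=\frac{\sinh R\,\tanh^{2m}(R/2)}{\int_0^R \sinh s\,\tanh^{2m}(s/2)\,ds}.
\end{align*}
The decisive step is the change of variable $x=\tanh^2(R/2)$, a strictly increasing smooth bijection $(0,+\infty)\to(0,1)$: it rationalizes the three algebraic geometric factors,
\begin{align*}
\sinh R=\frac{2\sqrt x}{1-x},\qquad \tanh\frac R2=\sqrt x,\qquad \sinh\frac R2=\sqrt{\frac{x}{1-x}},
\end{align*}
it turns the integral into the elementary $I_m(x):=\int_0^x w^m(1-w)^{-2}\,dw=\sum_{k\ge 0}\frac{k+1}{m+k+1}\,x^{m+k+1}$, and it leaves only $R=2\operatorname{arctanh}\sqrt x$ transcendental, so that $\eta_{(m)}=x^{m+1/2}/\bigl((1-x)I_m(x)\bigr)$. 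From the power series one reads off the boundary data anchoring all four statements: $\eta_{(m)}R\to 2(m+1)$ and $\eta_{(m)}\sinh(R/2)\to m+1$ as $R\to 0^+$, while $(1-x)I_m(x)\to 1$ and hence $\eta_{(m)}(R)\to 1$ (and $\eta_{(m)}R,\ \eta_{(m)}\sinh(R/2)\to+\infty$) as $R\to+\infty$.

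Conclusions (2) and (3) assert unconditional monotonicity, so I would take logarithmic derivatives in $x$. With $I_m'(x)=x^m(1-x)^{-2}$, clearing positive denominators shows that (2) is equivalent to $(1-x)\bigl(m+1-(m-1)x\bigr)I_m(x)>x^{m+1}$ and (3) to $(1-x)(m+1-mx)I_m(x)<x^{m+1}$ on $(0,1)$. In each case the two sides agree to leading order as $x\to 0^+$, and the next coefficient already has the asserted sign, namely $\tfrac{2}{(m+1)(m+2)}$ for (2) and $-\tfrac{m}{(m+1)(m+2)}$ for (3). The global inequality I would get by the cleanest available route: expand the difference as a power series in $x$ and check that every coefficient has the required sign. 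Concretely, for (2) the coefficient of $x^{m+k+1}$ in $(1-x)(m+1-(m-1)x)I_m(x)-x^{m+1}$ is $(m+1)c_k-2mc_{k-1}+(m-1)c_{k-2}$ with $c_k=\frac{k+1}{m+k+1}$, which one checks is $>0$ for all $k\ge 1$ (it is $\tfrac{2}{(m+1)(m+2)}$ for $k=1$, $\tfrac{4m}{(m+1)(m+2)(m+3)}$ for $k=2$, and a short computation handles general $k$); (3) is symmetric. As a backup, both inequalities also follow from the monotonicity fact — immediate from the series, since $(m+1)I_m(x)-xI_m'(x)=-\sum_{k\ge1}\frac{k(k+1)}{m+k+1}x^{m+k+1}<0$ — that $x\mapsto x^{m+1}/I_m(x)$ is strictly decreasing on $(0,1)$, together with the shape of the polynomial factors $(1-x)(m+1-(m-1)x)$ and $(1-x)(m+1-mx)$, which are decreasing on $(0,1)$ (vertices at $x=\tfrac{m}{m-1},\tfrac{2m+1}{2m}>1$).

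Conclusion (4) concerns $\eta_{(m)}\sinh(R/2)=x^{m+1}(1-x)^{-3/2}I_m(x)^{-1}$, whose monotonicity in $R$ is governed by the sign of
\begin{align*}
\psi_m(x):=(1-x)\Bigl(m+1-\bigl(m-\tfrac12\bigr)x\Bigr)-\frac{x^{m+1}}{I_m(x)},
\end{align*}
with $\psi_m(0^+)=0$, $\psi_m'(0^+)=\tfrac{2-m}{2(m+2)}$, and $\psi_m(1^-)=0^+$ (since $I_m(x)\psi_m(x)\to\tfrac12$ while $I_m(x)\to+\infty$). For $m\le 2$ the target is $\psi_m>0$ on $(0,1)$, again via positivity of the coefficients of $I_m(x)\psi_m(x)$ — for the borderline $m=2$ one must push the expansion near $x=0$ to order $x^5$, where the coefficient $\tfrac1{20}$ first appears. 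For $m\ge 3$, $\psi_m<0$ near $0$ and $>0$ near $1$, and the task is to show it changes sign exactly once, which gives the single threshold $\bar R_m$; I would do this through $\Phi_m(x):=I_m(x)\psi_m(x)$, whose derivative one computes to be $P_m'(x)I_m(x)+\tfrac{3x^{m+1}}{2(1-x)}$ with $P_m(x)=(1-x)(m+1-(m-\tfrac12)x)$ and $P_m'<0$ on $[0,1]$, and then show this quantity changes sign at most once on $(0,1)$, reducing (4) to the positivity of one explicit combination of $I_m$ and powers of $x$. Conclusion (1) is the least algebraic because of the factor $R=2\operatorname{arctanh}\sqrt x$: here $\partial_R\ln(\eta_{(m)}R)=\sqrt x(1-x)\bigl(\tfrac{m+1/2}{x}+\tfrac1{1-x}-\tfrac{I_m'}{I_m}\bigr)+\tfrac1R$, and the small-$R$ expansion $\eta_{(m)}R=2(m+1)\bigl(1-\tfrac{m-1}{6(m+2)}R^2+\cdots\bigr)$ (whose $R^2$-coefficient vanishes exactly at $m=1$) together with $\eta_{(m)}R\to+\infty$ show that this derivative tends to $0^+$ at $R=0$ for $m=1$ and to $0^-$ for $m\ge2$, and to $0^+$ as $R\to+\infty$. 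For $m=1$ I would prove $\eta_{(1)}R=\dfrac{2x^{3/2}\operatorname{arctanh}\sqrt x}{x+(1-x)\ln(1-x)}$ is increasing directly; for $m\ge 2$ I would show $\partial_R\ln(\eta_{(m)}R)$ has a unique zero, e.g.\ by proving its derivative changes sign at most once, or by rewriting the critical-point equation as an equality between an increasing and a decreasing function of $R$.

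The main obstacle is precisely this last family of statements: the ``changes sign exactly once'' assertions — (1) for $m\ge 2$ and (4) for $m\ge 3$ — and the ``never changes sign'' assertions in (2), (3) and in (4) for $m\le 2$, all of which must hold \emph{uniformly in $m$}. The substitution $x=\tanh^2(R/2)$ reduces every claim to an inequality among $I_m(x)$, $I_m'(x)$ and low-degree rational functions, and the soft inputs ($x^{m+1}/I_m(x)$ strictly decreasing, partial-sum bounds for the power series of $I_m$) are easy; the real work is converting these into sharp global inequalities, especially as $x\to1^-$, where $\eta_{(m)}\to1$ and the competing products share their limits, so the sign is decided purely by lower-order behavior. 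The threshold values $m=1$ in (1) and $m=2$ in (4) are exactly where the leading small-$R$ coefficient degenerates, forcing an extra order of expansion in the borderline cases, and the $\operatorname{arctanh}$ factor makes (1) genuinely harder than the other three, so that case I expect to need a second differentiation in $R$ rather than a one-variable rational inequality.
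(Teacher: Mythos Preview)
Your substitution $x=\tanh^2(R/2)$ is a genuinely different route from the paper's, which works directly in $R$: the paper writes each product $\eta_{(m)}\cdot(\text{factor})$ as a ratio with numerator $(\tanh\frac R2)^{2m}$ times an elementary function and denominator $M(R)=\int_0^R(\tanh\frac r2)^{2m}\sinh r\,dr$, differentiates once, and reduces the sign question to an auxiliary function whose derivative factors explicitly (e.g.\ for (3) one lands on $P_2'(R)=(1-\cosh R)\,\frac{d}{dR}(\tanh\frac R2)^{2m}<0$, and for (4) on $Q_1'(R)=\frac{(\tanh\frac R2)^{2m}\sinh R}{L(R)^2}(\cosh R-1)(3\cosh R-4m+5)$). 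This gives all four parts by one or two differentiations each, with no series and no case analysis beyond reading off the sign of a single explicit factor. Your rationalized setup is elegant and makes the asymptotics transparent, but the price is that the resulting inequalities in $I_m(x)$ are no longer one-line.

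There is, however, a concrete error in your plan for (3). The power-series argument does \emph{not} work: writing $g(x)=(1-x)(m+1-mx)I_m(x)-x^{m+1}$, the coefficient of $x^{m+k+1}$ (for $k\ge1$) equals
\[
\frac{m\,(k-m-1)}{(m+k-1)(m+k)(m+k+1)},
\]
which is negative for $1\le k\le m$, zero at $k=m+1$, and \emph{positive} for all $k\ge m+2$. So ``every coefficient has the required sign'' is false for (3); the situation is not symmetric with (2), where the same computation gives $\frac{2mk}{(m+k-1)(m+k)(m+k+1)}>0$ and your argument is valid. Your backup does not rescue (3) either: knowing that $x^{m+1}/I_m(x)$ and $(1-x)(m+1-mx)$ are both strictly decreasing from $m+1$ to $0$ on $(0,1)$ gives no comparison between them. (One can still prove $g<0$ on $(0,1)$ by a shape argument---$g(0)=g(1^-)=0$ and $g''$ changes sign exactly once---but that is a different proof from the one you propose.)

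Parts (1) for $m\ge2$ and (4) for $m\ge3$ remain sketches in your write-up: you correctly identify the boundary behavior, but ``show $\Phi_m'$ changes sign at most once'' and ``prove $\partial_R\ln(\eta_{(m)}R)$ has a unique zero'' are restatements of the goal, not reductions to something checked. The paper handles these uniformly by the factorizations above: for (4) the factor $3\cosh R-4m+5$ immediately gives the unique turning point of $Q_1$, and for (1) the chain $F_1,F_2,F_3,F_4$ terminates at $F_4'(R)=\frac{2(\tanh\frac R2)^{2m}}{\sinh R}(m-\cosh R)$, whose sign pattern (always negative for $m=1$, one sign change for $m\ge2$) propagates back. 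If you want to push your $x$-variable approach through, you will need analogous explicit factorizations rather than coefficient positivity.
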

\begin{rem}
For $m=0$, by Lemma ~\ref{lem3} below we see that $\eta_{(0)}=\coth(R/2)$. Then it is straightforward to check that on $(0,+\infty)$ the function $\eta_{(0)}R$ is strictly increasing, $\eta_{(0)}\sinh{R}$ is strictly increasing, $\eta_{(0)}\tanh{(R/2)}$ is constant, and $\eta_{(0)}\sinh{(R/2)}$ is strictly increasing.
\end{rem}

\subsection{Monotonicity of Steklov eigenvalues on geodesic disks with varying curvature}\label{subsec1.3} Compared with the Bandle-type bounds for Neumann eigenvalues \cite{Ban72} \cite[Cor.~3.9]{Ban80} and Cheng-type bounds for Dirichlet eigenvalues \cite{Che75} derived in \cite{LL23b}, the bounds for Steklov eigenvalues in this subsection may be called Escobar-type in view of \cite[Thms.~1 and 2]{Esc00}. For the results in this subsection, we need the metric expression of spaces of constant curvature $K\in \R$ as the warped product Riemannian manifold. Let $M^n_K=[0,r_0)\times \SS^{n-1}$ be the space form of constant curvature $K\in \R$. Then its warped product metric is given by
\begin{align*}
g=dr^2+h_K^2(r)g_{\SS^{n-1}},
\end{align*}
where
\begin{align*}
h_K(r)=
\begin{cases}
\dfrac{1}{\sqrt{K}}\sin (\sqrt{K}r),\quad &r\in [0,\dfrac{\pi}{\sqrt{K}}),\quad r_0=\dfrac{\pi}{\sqrt{K}},\quad \text{if }K>0,\\
 r,\quad &r\in [0,+\infty),\quad r_0=+\infty,\quad \text{if }K=0,\\
\dfrac{1}{\sqrt{-K}}\sinh (\sqrt{-K}r),\quad &r\in [0,+\infty),\quad r_0=+\infty,\quad \text{if }K<0.
\end{cases}
\end{align*}
We may prove the following result for the second-order Steklov eigenvalue.
\begin{corr}\label{thm7}
For the second-order Steklov eigenvalue $\sigma_{(m)}$ $(m \geq 1)$, we have the following results.
\begin{enumerate}
  \item Fix $A>0$. The $m$th Steklov eigenvalue $\sigma_{(m)}(K;A)$ on a geodesic disk of area $A$ in the space form $M_K^2$ is a strictly increasing  function of the curvature $K\in (-\infty,4\pi/A)$.
  \item Fix $\rho>0$. The $m$th Steklov eigenvalue $\sigma_{(m)}(K;\rho)$ on a geodesic disk of radius $\rho$ in the space form $M_K^2$ is a strictly increasing  function of the curvature $K\in (-\infty,(\pi/\rho)^2)$.
\end{enumerate}
\end{corr}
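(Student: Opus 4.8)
The plan is to reduce both statements to the explicit formula $\sigma_{(m)}=m/\sin R$ on the unit sphere $\SS^2$ (and its hyperbolic analogue $\sigma_{(m)}=m/\sinh R$) together with the scaling law $\sigma_k((M,c^2g))=c^{-1}\sigma_k((M,g))$ recorded above. Since $M_K^2$ with $K>0$ is isometric to $(\SS^2,\tfrac1K g_{\SS^2})$, and a geodesic disk of radius $\rho$ in $M_K^2$ corresponds to one of radius $\sqrt K\,\rho$ in the unit sphere, the scaling law gives $\sigma_{(m)}(K;\rho)=\sqrt K\,m/\sin(\sqrt K\,\rho)=m/h_K(\rho)$; the case $K<0$ is analogous, and $K=0$ gives the Euclidean value $m/\rho=m/h_0(\rho)$. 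Thus in both parts it suffices to analyze the monotonicity in $K$ of $h_K(\rho)$, with $\rho$ free in part~(2) and slaved to the area constraint in part~(1).

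For part~(2) I would fix $\rho>0$ and write $h_K(\rho)=\rho\,\phi(K\rho^2)$, where $\phi(x)=\sin(\sqrt x)/\sqrt x$ for $x\in(0,\pi^2)$, $\phi(0)=1$, and $\phi(x)=\sinh(\sqrt{-x})/\sqrt{-x}$ for $x<0$. One checks that $\phi$ extends to a smooth, strictly decreasing function on $(-\infty,\pi^2)$ --- for instance from the convergent series $\phi(x)=\sum_{k\ge0}(-x)^k/(2k+1)!$ and a sign analysis of $\phi'$, or from the elementary inequalities $\sin t<t$ on $(0,\pi)$ and $\sinh t>t$ on $(0,\infty)$. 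Since $K\mapsto K\rho^2$ is increasing and carries $(-\infty,(\pi/\rho)^2)$ onto $(-\infty,\pi^2)$, the map $K\mapsto h_K(\rho)$ is strictly decreasing there, and hence $\sigma_{(m)}(K;\rho)=m/h_K(\rho)$ is strictly increasing on $(-\infty,(\pi/\rho)^2)$.

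For part~(1) I would fix $A>0$ and first compute the area of the geodesic disk of radius $\rho$ in $M_K^2$, namely $2\pi\int_0^\rho h_K(r)\,dr$, which integrates to $\tfrac{2\pi}{K}(1-\cos(\sqrt K\,\rho))$ for $K>0$, to $\pi\rho^2$ for $K=0$, and to $\tfrac{2\pi}{-K}(\cosh(\sqrt{-K}\,\rho)-1)$ for $K<0$. Imposing ``area $=A$'' determines $\rho=\rho(K;A)$ uniquely (and such a disk exists) exactly when $K\in(-\infty,4\pi/A)$; eliminating the trigonometric and hyperbolic functions via $\sin^2=1-\cos^2$ and $\sinh^2=\cosh^2-1$ then yields, uniformly across the three regimes, the single identity
\[
h_K\bigl(\rho(K;A)\bigr)^2 \;=\; \frac{A}{\pi}\Bigl(1-\frac{KA}{4\pi}\Bigr).
\]
Consequently $\sigma_{(m)}(K;A)^2=m^2\pi\big/\bigl(A(1-KA/(4\pi))\bigr)$, and since $K\mapsto A(1-KA/(4\pi))$ is positive and strictly decreasing on $(-\infty,4\pi/A)$, the eigenvalue $\sigma_{(m)}(K;A)$ is strictly increasing there.

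The computations are elementary, so the main things to get right are bookkeeping rather than analysis: the uniform treatment of the three curvature regimes --- in particular verifying that the displayed area identity, and the resulting formula for $\sigma_{(m)}(K;A)^2$, are genuine identities valid at $K=0$ and not merely limits --- and pinning down the precise ranges $(-\infty,4\pi/A)$ and $(-\infty,(\pi/\rho)^2)$ on which a geodesic disk of prescribed area, respectively radius, actually exists, since this is what makes the stated intervals sharp. I do not expect any serious analytic obstacle, which is presumably why this is recorded as a corollary.
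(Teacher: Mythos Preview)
Your proposal is correct. Both you and the paper start from the scaling identity $\sigma_{(m)}(K;\cdot)=\sqrt{|K|}\,\sigma_{(m)}(\pm 1;\cdot)$ together with the explicit formula $\sigma_{(m)}=m/h_K(\rho)$, but the reductions diverge from there. The paper splits into the cases $K>0$ and $K<0$ and rewrites $\sigma_{(m)}(K;A)$ and $\sigma_{(m)}(K;\rho)$ as constant multiples of the already-analyzed normalized quantities $\sigma_{(m)}(\Theta)\sin(\Theta/2)$, $\sigma_{(m)}(\Theta)\sinh(\Theta/2)$, $\sigma_{(m)}(\Theta)\Theta$, so that the corollary literally falls out of the scaling-inequality framework of Sections~\ref{subsec1.1}--\ref{subsec1.2}; this is why the same template also proves Corollaries~\ref{thm8} and~\ref{thm9}, where closed forms are not available. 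You instead eliminate the case split by producing a single closed-form expression valid for all $K$ --- $h_K(\rho)=\rho\,\phi(K\rho^2)$ in part~(2) and $h_K(\rho(K;A))^2=\tfrac{A}{\pi}\bigl(1-\tfrac{KA}{4\pi}\bigr)$ in part~(1) --- from which the strict monotonicity is immediate. Your route is shorter and more self-contained for this particular corollary, but it relies on the explicit solvability of $\sigma_{(m)}$ in two dimensions and would not extend to the fourth-order eigenvalues $\xi_{(m)}$, $\eta_{(m)}$ treated in the companion corollaries.
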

Next we consider the type one fourth-order Steklov eigenvalue problem.
\begin{corr}\label{thm8}
For the fourth-order Steklov eigenvalue $\xi_{(m)}$ $(m \geq 1)$, we have the following results.
\begin{enumerate}
  \item Fix $A>0$. The $m$th Steklov eigenvalue $\xi_{(m)}(K;A)$ on a geodesic disk of area $A$ in the space form $M_K^2$ is a strictly increasing function of the curvature $K\in (-\infty,4\pi/A)$.
   \item Fix $\rho>0$. The $m$th Steklov eigenvalue $\xi_{(m)}(K;\rho)$ on a geodesic disk of radius $\rho$ in the space form $M_K^2$ is a strictly increasing  function of the curvature $K\in (-\infty,(\pi/\rho)^2)$.
\end{enumerate}
\end{corr}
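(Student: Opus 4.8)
The plan is to deduce Corollary~\ref{thm8} from the monotonicity statements of Theorems~\ref{thm2} and~\ref{thm5} by means of the homothety identifying $M_K^2$ with the round sphere (when $K>0$) or the standard hyperbolic plane (when $K<0$), combined with the scaling law $\xi_k((M,c^2g))=c^{-3}\xi_k((M,g))$. First I would record the geometric dictionary: for $K>0$, a geodesic disk of radius $\rho\in(0,\pi/\sqrt K)$ in $M_K^2$ is isometric to the geodesic disk $D_R\subset\SS^2$ of radius $R=\sqrt K\,\rho$ carrying the rescaled metric $K^{-1}g_{\SS^2}$, and its area equals $\tfrac{4\pi}{K}\sin^2(R/2)$; for $K<0$ the same holds with $\SS^2,\sin,K$ replaced by $\H^2,\sinh,-K$; and $K=0$ is the Euclidean case. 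Applying the scaling law with $c=1/\sqrt{|K|}$ then gives
\[
\xi_{(m)}(K;\rho)=K^{3/2}\,\xi^{\SS^2}_{(m)}(R)\ \ (K>0),\qquad \xi_{(m)}(K;\rho)=(-K)^{3/2}\,\xi^{\H^2}_{(m)}(R)\ \ (K<0),
\]
where $\xi^{\SS^2}_{(m)}(R)$ and $\xi^{\H^2}_{(m)}(R)$ denote the $m$th eigenvalue on the radius-$R$ geodesic disk in $\SS^2$ and $\H^2$, respectively.

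For part (2), with $\rho$ fixed I would use $|K|^{3/2}=R^3/\rho^3$ to rewrite the two identities above as $\xi_{(m)}(K;\rho)=\rho^{-3}R^3\xi^{\SS^2}_{(m)}(R)$ and $\xi_{(m)}(K;\rho)=\rho^{-3}R^3\xi^{\H^2}_{(m)}(R)$. On $(0,(\pi/\rho)^2)$ the map $K\mapsto R=\sqrt K\,\rho$ is a strictly increasing bijection onto $(0,\pi)$, and $R^3\xi^{\SS^2}_{(m)}(R)$ is strictly increasing by Theorem~\ref{thm2}(1); on $(-\infty,0)$ the map $K\mapsto R=\sqrt{-K}\,\rho$ is strictly \emph{decreasing} onto $(0,\infty)$, and $R^3\xi^{\H^2}_{(m)}(R)$ is strictly \emph{decreasing} by Theorem~\ref{thm5}(1), so the composition is again strictly increasing. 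Gluing the two branches at $K=0$ (where, by the remark after Theorem~\ref{thm2}, $R^3\xi_{(m)}(R)\to 2m^2(m+1)$ from both sides, matching the Euclidean value $2m^2(m+1)/\rho^3$) produces a function strictly increasing on all of $(-\infty,(\pi/\rho)^2)$.

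For part (1), with the area $A$ fixed I would instead use the area constraint to eliminate $K$ in favor of $R$: for $K>0$, $\tfrac{4\pi}{K}\sin^2(R/2)=A$ yields $K^{3/2}=(4\pi/A)^{3/2}\sin^3(R/2)$, hence $\xi_{(m)}(K;A)=(4\pi/A)^{3/2}\sin^3(R/2)\,\xi^{\SS^2}_{(m)}(R)$, which is strictly increasing in $R$ by Theorem~\ref{thm2}(4), while $K=\tfrac{4\pi}{A}\sin^2(R/2)$ is a strictly increasing bijection of $(0,\pi)$ onto $(0,4\pi/A)$; for $K<0$, $\tfrac{4\pi}{-K}\sinh^2(R/2)=A$ gives $\xi_{(m)}(K;A)=(4\pi/A)^{3/2}\sinh^3(R/2)\,\xi^{\H^2}_{(m)}(R)$, strictly \emph{decreasing} in $R$ by Theorem~\ref{thm5}(4), while $K=-\tfrac{4\pi}{A}\sinh^2(R/2)$ is strictly \emph{decreasing} in $R$, so again the composition is strictly increasing. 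Once more gluing at $K=0$ (the one-sided limits both being $(4\pi/A)^{3/2}\cdot\tfrac14 m^2(m+1)=2m^2(m+1)(\pi/A)^{3/2}$, the Euclidean value on a disk of area $A$) completes the argument.

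The only point requiring genuine care is the matching at $K=0$: one must verify that $\xi_{(m)}(\,\cdot\,;\rho)$ and $\xi_{(m)}(\,\cdot\,;A)$ extend continuously across $K=0$ with the Euclidean eigenvalue as value, and that the spherical and hyperbolic one-sided limits agree with it. This is precisely the limiting behaviour $R^3\xi_{(m)}(R)\to 2m^2(m+1)$ as $R\to 0^+$ in both space forms, already recorded in the remark following Theorem~\ref{thm2} (and readily confirmed from the explicit expressions in Lemmas~\ref{lem2} and~\ref{lem3}). Beyond this, the proof is bookkeeping with the two homotheties and the signs of the reparametrizations, so no further analytic obstacle arises once Theorems~\ref{thm2} and~\ref{thm5} are in hand; the argument runs exactly parallel to that of Corollary~\ref{thm7}.
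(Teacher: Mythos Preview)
Your argument is correct and follows essentially the same route as the paper: rescale to curvature $\pm 1$, then invoke Theorem~\ref{thm2}(1),(4) for $K>0$ and Theorem~\ref{thm5}(1),(4) for $K<0$, tracking the sign of the reparametrization $K\mapsto R$. Your explicit treatment of the gluing at $K=0$ via the Euclidean limit $R^3\xi_{(m)}(R)\to 2m^2(m+1)$ is a point the paper leaves implicit, so in that respect your write-up is slightly more complete.
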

Last we consider the type two fourth-order Steklov eigenvalue problem.
\begin{corr}\label{thm9}
For the fourth-order Steklov eigenvalue $\eta_{(m)}$ $(m \geq 1)$, we have the following results.
\begin{enumerate}
   \item Fix $A>0$. Let the $m$th Steklov eigenvalue $\eta_{(m)}(K;A)$ on a geodesic disk of area $A$ in the space form $M_K^2$ be the function of the curvature $K\in (-\infty,4\pi/A)$. For $m=1$, there exists a $K_1>0$, such that $\eta_{(m)}(K;A)$ is strictly decreasing on $(-\infty, K_1)$, and strictly increasing on $(K_1,4\pi/A)$; for $m=2$, $\eta_{(m)}(K;A)$ is strictly decreasing on $(-\infty, 0)$, and strictly increasing on $(0,4\pi/A)$; while for $m\geq 3$, there exists a $K_m<0$, such that $\eta_{(m)}(K;A)$ is strictly decreasing on $(-\infty, K_m)$, and strictly increasing on $(K_m,4\pi/A)$.
  \item Fix $\rho>0$. Let the $m$th Steklov eigenvalue $\eta_{(m)}(K;\rho)$ on a geodesic disk of radius $\rho$ in the space form $M_K^2$ be the function of the curvature $K\in (-\infty,(\pi/\rho)^2)$. For $m=1$, $\eta_{(m)}(K;\rho)$ is strictly decreasing on $(-\infty,0)$, and strictly increasing on $(0,(\pi/\rho)^2)$; while for $m\geq 2$, there exists a $\bar{K}_m<0$, such that $\eta_{(m)}(K;\rho)$ is strictly decreasing on $(-\infty,\bar{K}_m)$, and strictly increasing on $(\bar{K}_m,(\pi/\rho)^2)$.
\end{enumerate}
\end{corr}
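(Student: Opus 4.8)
The strategy is to deduce Corollary~\ref{thm9} from Theorems~\ref{thm3} and \ref{thm6} via the homothety relating a geodesic disk in the space form $M_K^2$ to a geodesic disk in a unit model space. From the formula for $h_K$, a geodesic disk of radius $\rho$ in $M_K^2$ with $K>0$ is isometric to the geodesic disk of radius $\sqrt K\,\rho$ in $\SS^2$ carrying $K^{-1}$ times its round metric; for $K<0$ it is isometric to the geodesic disk of radius $\sqrt{-K}\,\rho$ in $\H^2$ carrying $(-K)^{-1}$ times its hyperbolic metric; and for $K=0$ it is the flat disk of radius $\rho$. Writing $R=\sqrt{|K|}\,\rho$ and using the homogeneity $\eta_k((M,c^2g))=c^{-1}\eta_k((M,g))$ recorded in Section~\ref{sec1}, this yields, with $\eta_{(m)}^{\SS^2}(R)$ and $\eta_{(m)}^{\H^2}(R)$ the eigenvalues of \eqref{problem3} on the unit-model disks,
\begin{align*}
\eta_{(m)}(K;\rho) &= \frac{1}{\rho}\, R\,\eta_{(m)}^{\SS^2}(R) \quad (K>0), \\
\eta_{(m)}(K;\rho) &= \frac{1}{\rho}\, R\,\eta_{(m)}^{\H^2}(R) \quad (K<0).
\end{align*}
For the area-normalized statement one substitutes the elementary identity $\mathrm{Area}=\frac{4\pi}{|K|}\sin^2(R/2)$ for $K>0$ (and with $\sinh$ in place of $\sin$ for $K<0$) for a geodesic disk of radius $\rho$ in $M_K^2$, so that $\sqrt{|K|}=\frac{2\sqrt{\pi}}{\sqrt{A}}\sin(R/2)$, and the same identities become
\begin{align*}
\eta_{(m)}(K;A) &= \frac{2\sqrt{\pi}}{\sqrt{A}}\,\eta_{(m)}^{\SS^2}(R)\sin(R/2) \quad (K>0), \\
\eta_{(m)}(K;A) &= \frac{2\sqrt{\pi}}{\sqrt{A}}\,\eta_{(m)}^{\H^2}(R)\sinh(R/2) \quad (K<0).
\end{align*}
Thus the fixed-radius problem is governed by the geodesic-radius normalization $\eta_{(m)}R$, and the fixed-area problem by the ``area'' normalization $\eta_{(m)}\sin(R/2)$, $\eta_{(m)}\sinh(R/2)$ --- precisely the quantities treated in Theorems~\ref{thm3} and \ref{thm6}.

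Next I would track the substitution $K\mapsto R$. On the spherical branch $K$ is a strictly increasing smooth bijection of $(0,(\pi/\rho)^2)$ (resp. $(0,4\pi/A)$) onto $R\in(0,\pi)$, so the variation of $K\mapsto\eta_{(m)}(K;\cdot)$ reproduces that of $R\mapsto R\,\eta_{(m)}^{\SS^2}(R)$ (resp. $R\mapsto\eta_{(m)}^{\SS^2}(R)\sin(R/2)$): by Theorem~\ref{thm3}(1) it is strictly increasing on $\{K>0\}$ for every $m$ in the radius case; by Theorem~\ref{thm3}(4) it is strictly increasing there for $m\ge2$ in the area case, while for $m=1$ it decreases then increases, with minimum at $K_1=\frac{4\pi}{A}\sin^2(R_1/2)>0$. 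On the hyperbolic branch, by contrast, $K=-|K|$ is strictly \emph{decreasing} in $R\in(0,+\infty)$, so monotonicity in $K$ is reversed relative to monotonicity in $R$; pushing Theorem~\ref{thm6}(1) and \ref{thm6}(4) through this orientation reversal turns ``strictly increasing on $(0,+\infty)$'' into ``strictly decreasing on $(-\infty,0)$'' (namely $m=1$ in the radius case, $m\le2$ in the area case), and turns ``decreasing on $(0,R_m)$ then increasing on $(R_m,+\infty)$'' into ``increasing just to the left of $0$, then decreasing further left'', with transition value $-(R_m/\rho)^2<0$ (resp. $-\frac{4\pi}{A}\sinh^2(\bar{R}_m/2)<0$) for the remaining $m$.

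Finally I would glue the two branches at $K=0$. The map $K\mapsto\eta_{(m)}(K;\rho)$ (resp. $K\mapsto\eta_{(m)}(K;A)$) is continuous at $0$: as $K\to0$ the disk of fixed radius (resp. area) in $M_K^2$ converges smoothly, with its boundary, to the flat disk, and the spectrum of \eqref{problem3} depends continuously on the metric (alternatively this can be read off the explicit formulas in Lemmas~\ref{lem2} and \ref{lem3}). In every case in which $0$ is not the global minimum, the function is already strictly increasing on a one-sided neighborhood of $0$ on each side, and continuity promotes this to strict monotonicity across $0$; the pieces then assemble into exactly the three patterns in part (1) ($m=1$, $m=2$, $m\ge3$) and the two patterns in part (2) ($m=1$, $m\ge2$) of Corollary~\ref{thm9}. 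No single estimate is the difficulty here, since everything is inherited from Theorems~\ref{thm3} and \ref{thm6}; the one place where care is essential is the bookkeeping --- pairing each curvature normalization with the correct geometric factor, and keeping strict track of the orientation-reversing substitution $K\mapsto R$ on the hyperbolic branch --- so that the monotone pieces, their transition points, and the common Euclidean value at $K=0$ all line up.
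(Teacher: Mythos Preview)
Your proposal is correct and follows essentially the same approach as the paper: both reduce to Theorems~\ref{thm3}(1),(4) and \ref{thm6}(1),(4) via the scaling identity $\eta_{(m)}(K;\cdot)=\sqrt{|K|}\,\eta_{(m)}(\pm 1;\cdot)$, obtaining the same normalized quantities $\eta_{(m)}(\Theta)\Theta$ and $\eta_{(m)}(\Theta)\sin(\Theta/2)$ (resp.\ $\sinh$), with $\Theta$ increasing in $K$ on the spherical branch and decreasing on the hyperbolic branch. Your write-up is in fact more careful than the paper's, which simply cites the relevant parts of Theorems~\ref{thm3} and \ref{thm6} and leaves the gluing at $K=0$ and the assembly of the monotone pieces implicit.
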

\begin{rem}
From Corollary ~\ref{thm9} we find that both $\eta_{(m)}(K;A)$ and $\eta_{(m)}(K;\rho)$ are unimodal with respect to $K$. Moreover, numerical experiments indicate that the location of the peak moves leftwards as $m$ increases.
\end{rem}
\begin{rem}
For $m=0$, recall
\begin{align*}
h(r) =
\begin{cases}
\dfrac{1}{\sqrt{-K}}\sinh (\sqrt{-K}r), & K < 0, \\[6pt]
\dfrac{1}{\sqrt{K}}\sin (\sqrt{K}r), & K > 0.
\end{cases}
\end{align*}
Substituting these into ~\eqref{eq-eta0} below (noting that $n=2$), we obtain
\begin{align*}
\eta_{(0)} =
\begin{cases}
\sqrt{-K}\coth\dfrac{\sqrt{-K}R}{2}, & K < 0, \\[6pt]
\sqrt{K}\cot\dfrac{\sqrt{K}R}{2}, & K > 0.
\end{cases}
\end{align*}
Then we may check that the following conclusions hold.
\begin{enumerate}
\item Fix $A>0$. The function $\eta_{(0)}(K;A)$ is strictly decreasing on $(-\infty,4\pi/A)$.
\item Fix $\rho>0$. The function $\eta_{(0)}(K;\rho)$ is strictly decreasing on $(-\infty,(\pi/\rho)^2)$.
\end{enumerate}
\end{rem}

\subsection{Scaling inequalities for $n\geq 3$}\label{subsec1.4}
We first consider second-order Steklov eigenvalues, and we have the following conclusion.

\begin{thm}\label{thm 10}
For the case $n\geq 3$ and $m\geq 1$, the function $\sigma_{(m)} \sin R$ is strictly increasing on $(0,\pi)$ in the sphere $\SS^n$ and the function $\sigma_{(m)} \sinh R$ is strictly decreasing on $(0,+\infty)$ in the hyperbolic space $\H^n$.
\end{thm}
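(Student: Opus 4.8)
The plan is to separate variables, reduce the assertion to the monotonicity of a single scalar quantity built from the radial profile, and then trap that quantity on the correct side of an explicit algebraic barrier. On $M^n=[0,R]\times\SS^{n-1}$ with $g=dr^2+h^2(r)g_{\SS^{n-1}}$, inserting $u=\phi(r)Y$ (with $Y$ a degree-$m$ spherical harmonic) into \eqref{problem1} shows $\sigma_{(m)}(R)=\phi'(R)/\phi(R)$, where $\phi=\phi_m$ is the solution of
\begin{equation*}
\phi''+(n-1)\frac{h'}{h}\phi'-\frac{m(m+n-2)}{h^2}\phi=0
\end{equation*}
that is regular at $r=0$, normalized so that $\phi(r)\sim r^m$ as $r\to 0^+$; that $\phi_m$ realizes the $m$-th distinct eigenvalue is explained in Section~\ref{subsec2.3}. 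A short argument on the ODE---were $r_1$ the first zero of $\phi'$ in $(0,r_0)$, then $\phi(r_1)>0$ and the equation forces $\phi''(r_1)=m(m+n-2)h(r_1)^{-2}\phi(r_1)>0$, contradicting that $\phi$ has a local maximum at $r_1$---shows $\phi>0$ and $\phi'>0$ on $(0,r_0)$ (with $r_0=\pi$ for $\SS^n$ and $r_0=+\infty$ for $\H^n$). Hence $v(r):=h(r)\phi'(r)/\phi(r)$ is smooth and positive there, and since $v(R)=\sigma_{(m)}(R)h(R)$, the theorem asserts precisely that $v$ is strictly increasing on $(0,\pi)$ when $h=\sin r$ and strictly decreasing on $(0,+\infty)$ when $h=\sinh r$.

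Next I would turn the second-order equation into a first-order one for $v$. With $w=\phi'/\phi$ (so $w'+w^2+(n-1)(h'/h)w=m(m+n-2)/h^2$), the substitution $v=hw$ gives
\begin{equation*}
v'=\frac{1}{h}\big(m(m+n-2)-v^2-(n-2)h'\,v\big),\qquad \lim_{r\to 0^+}v(r)=m.
\end{equation*}
Since $h>0$ and $v>0$, the sign of $v'$ equals that of the quadratic $p(v,r):=m(m+n-2)-v^2-(n-2)h'(r)v$, and for $v>0$ one has $p(v,r)>0$ if and only if $v<v_+(r)$, where $v_+(r):=\tfrac12\big(\!-(n-2)h'+\sqrt{(n-2)^2h'^2+4m(m+n-2)}\,\big)>0$ is the positive root of $p(\cdot,r)$. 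Moreover $2v_+(r)+(n-2)h'(r)=\sqrt{(n-2)^2h'^2+4m(m+n-2)}>0$, so implicit differentiation of $p(v_+,r)=0$ gives $v_+'=-(n-2)h''v_+\big/\big(2v_++(n-2)h'\big)$, which is $>0$ on $(0,\pi)$ for $h=\sin r$ (there $h''=-h<0$) and $<0$ on $(0,+\infty)$ for $h=\sinh r$ (there $h''=h>0$). It therefore suffices to prove $v<v_+$ on all of $(0,\pi)$ in the spherical case and $v>v_+$ on all of $(0,+\infty)$ in the hyperbolic case.

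The delicate point---and where $n\geq 3$ enters---is that $v$ and $v_+$ are tangent at $r=0$: using $\sqrt{(n-2)^2+4m(m+n-2)}=2m+n-2$ one gets $v_+(0)=m=v(0^+)$, and a Frobenius-type expansion (whose linear terms vanish because $2m+n-1>0$), writing $h(r)=r+\varepsilon r^3/6+O(r^5)$ with $\varepsilon=-1$ for $\SS^n$ and $\varepsilon=+1$ for $\H^n$, yields
\begin{equation*}
v(r)=m-\frac{\varepsilon\,m(n-2)}{2(2m+n)}\,r^2+O(r^3),\qquad v_+(r)=m-\frac{\varepsilon\,m(n-2)}{2(2m+n-2)}\,r^2+O(r^3).
\end{equation*}
As $n\geq 3$ and $2m+n-2<2m+n$, this gives $v<v_+$ for small $r>0$ in the spherical case ($\varepsilon=-1$) and $v>v_+$ for small $r>0$ in the hyperbolic case ($\varepsilon=+1$); for $n=2$ the two profiles agree to this order, reflecting that $\sigma_{(m)}\sin R\equiv m$ there. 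I would then close by continuation: if $v$ first met $v_+$ at some $r_1\in(0,r_0)$, then $v(r_1)=v_+(r_1)$ forces $v'(r_1)=p(v_+(r_1),r_1)/h(r_1)=0$, while the constant-sign gap $v_+-v$ on $(0,r_1)$, vanishing at $r_1$, forces $(v_+-v)'(r_1)=v_+'(r_1)$ to have a sign contradicting the sign of $v_+'(r_1)\neq 0$ computed above; hence no such $r_1$ exists, $v'$ keeps a constant strict sign, and the monotonicity of $v(R)=\sigma_{(m)}(R)h(R)$ follows. The main obstacle is exactly this tangency at the origin: since $v$ and $v_+$ share their zeroth- and first-order Taylor data, everything hinges on comparing the second-order coefficients---obtained from the Riccati ODE for $v$ and algebraically for $v_+$---and it is there that the restriction $n\geq 3$ (equivalently $n-2>0$) is indispensable, whereas the continuation step is a routine maximum-principle-type argument.
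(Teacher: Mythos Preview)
Your proof is correct, but the paper argues differently. Its main proof does not pass to the Riccati equation at all: integrating the radial ODE once gives $h^{n-1}\psi'=\tau_m\int_0^R h^{n-3}\psi\,dr$, so $\sigma_{(m)}h=\tau_m\int_0^R h^{n-3}\psi\,dr\big/\big(h^{n-2}\psi\big)$; differentiating this ratio and then the numerator of the resulting expression collapses (after using the ODE again) to $G'(R)=-\frac{n-2}{\tau_m}h^{n-1}h''\psi\psi'$, whose sign is read off from $h''$. The paper also records an alternative (in the proof of Theorem~\ref{thm 14} and the remark following it) that \emph{is} Riccati-based and concerns exactly your $v=z_m$, but instead of trapping $v$ against the barrier $v_+$ it sets $F=z_m^2+(n-2)h'z_m-\tau_m$ (which is $-p(z_m,\cdot)$ in your notation) and observes $F'+c(r)F=(n-2)h''z_m$ with $c(r)=(2z_m+(n-2)h')/h$; an integrating-factor argument then gives $F\le 0$ (resp.\ $\ge 0$) directly from $h''\le 0$ (resp.\ $\ge 0$), bypassing any expansion at $r=0$. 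Your barrier-plus-continuation route is longer precisely because $v$ and $v_+$ are tangent at the origin and you must go to second order to separate them, whereas both of the paper's arguments extract the sign from $h''$ in one line; on the other hand, your method makes the $r^2$-gap explicit and shows transparently that the strict monotonicity degenerates exactly when $n=2$.
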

\begin{rem}From Theorem ~\ref{thm 10} we have the following results.

\begin{enumerate}

\item  Since the functions $R/\sin R$, $\tan( R/2)/\sin R$, and $\sin( R/2)/ \sin R$ are strictly increasing on $(0,\pi)$, one can check that $\sigma_{(m)}R$, $\sigma_{(m)}\sin R$, $\sigma_{(m)}\tan(R/2)$, and $\sigma_{(m)} \sin(R/2)$ are strictly increasing on $(0,\pi)$.

\item Since the functions $R/\sinh R$, $\tanh (R/2)/\sinh R$, and $\sinh(R/2)/ \sinh R$ are strictly decreasing on $(0,+\infty)$, one can check that $\sigma_{(m)}R$, $\sigma_{(m)}\sinh R$, $\sigma_{(m)}\tanh(R/2)$, and $\sigma_{(m)}\sinh(R/2)$ are strictly decreasing on $(0,+\infty)$.

\end{enumerate}
\end{rem}

Moreover, for the higher-dimensional hyperbolic space, we are able to get one monotonicity result for $\eta_{(m)}$.
\begin{thm}\label{thm 11}
For the case $n\geq 3$ and $m\geq 0$, the function $\eta_{(m)} \sinh R$ is strictly increasing on $(0,+\infty)$ in the hyperbolic space $\H^n$.
\end{thm}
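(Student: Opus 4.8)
The plan is to make $\eta_{(m)}$ explicit by separation of variables and then convert the monotonicity in $R$ into a positivity statement for a single function of $r$ governed by a Riccati-type identity. \emph{Step 1 (explicit formula).} Separating variables for \eqref{problem3} on $[0,R]\times\SS^{n-1}$ with $h(r)=\sinh r$, write $u=f(r)Y_m(\theta)$ with $Y_m$ an eigenfunction of $\Delta_{\SS^{n-1}}$ of eigenvalue $\lambda=m(m+n-2)$. Then $\Delta^2u=0$ becomes $L_\lambda(L_\lambda f)=0$, where $L_\lambda f=f''+(n-1)\coth r\,f'-\lambda f/\sinh^2r$. Let $w_0$ be the solution of $L_\lambda w_0=0$ regular at $r=0$, normalized so that $w_0\sim c\,r^m$ with $c>0$; from $(\sinh^{n-1}r\,w_0')'=\lambda\sinh^{n-3}r\,w_0$ a bootstrap gives $w_0>0$ and $w_0'\ge0$ on $(0,\infty)$. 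Writing $w:=L_\lambda f$ (which solves $L_\lambda w=0$ and must be regular, hence a multiple of $w_0$, normalized to $w=w_0$), imposing $f(R)=0$ and $\Delta u=\eta\,\partial_\nu u$, and using that $W:=w_0f'-fw_0'$ satisfies $(\sinh^{n-1}r\,W)'=\sinh^{n-1}r\,w_0^2$ with vanishing boundary term at $0$, one finds $f'(R)=(\sinh^{n-1}R\;w_0(R))^{-1}\int_0^R\mu$ and hence
\[
\eta_{(m)}(R)=\frac{\mu(R)}{\int_0^R\mu(s)\,ds},\qquad \mu(r):=\sinh^{n-1}r\;w_0(r)^2 .
\]
For $m=0$, $w_0\equiv1$ and this is \eqref{eq-eta0}; the general formula may already be read off from Section~\ref{subsec2.3}.

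\emph{Step 2 (reduction to an ODE inequality).} Set $Q(r):=\mu(r)\sinh r=\sinh^nr\;w_0^2$; then $Q(0)=0$, so
\[
\eta_{(m)}(R)\sinh R=\frac{Q(R)}{\int_0^R\mu(s)\,ds}=\frac{\int_0^RQ'(s)\,ds}{\int_0^R\mu(s)\,ds}.
\]
By the elementary monotone-ratio principle — if $Q'/\mu$ is strictly increasing on $(0,\infty)$, then so is $R\mapsto(\int_0^RQ')/(\int_0^R\mu)$ — it suffices to show $Q'/\mu$ is strictly increasing. Using $\mu'/\mu=(n-1)\coth r+2w_0'/w_0$ one computes $Q'/\mu=n\cosh r+2P$, where $P(r):=\sinh r\,w_0'(r)/w_0(r)\ge0$ and $P(0^+)=m$. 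Thus the goal becomes $n\sinh r+2P'(r)>0$ on $(0,\infty)$.

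\emph{Step 3 (positivity via a linear ODE).} From $L_\lambda w_0=0$ one gets the Riccati-type identity $\sinh r\,P'=\lambda-P^2-(n-2)P\cosh r$. Introduce
\[
g(r):=\lambda-P(r)^2-(n-2)P(r)\cosh r+\tfrac n2\sinh^2r ,
\]
so that $\sinh r\,P'=g-\tfrac n2\sinh^2r$ and hence $(Q'/\mu)'=n\sinh r+2P'=2g/\sinh r$; the claim reduces to $g>0$ on $(0,\infty)$. Two facts give this. First, $P(0^+)=m$ together with $\lambda=m^2+(n-2)m$ yields $g(0^+)=0$. Second, differentiating the definition of $g$ and substituting $P'=(g-\tfrac n2\sinh^2r)/\sinh r$ produces the \emph{linear} first-order equation
\[
g'+\frac{2P+(n-2)\cosh r}{\sinh r}\,g=\sinh r\Big(2P+\tfrac{n^2}{2}\cosh r\Big),
\]
whose right-hand side is strictly positive on $(0,\infty)$ because $P\ge0$. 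Multiplying by the integrating factor $I(r)=\exp\!\left(\int\tfrac{2P+(n-2)\cosh r}{\sinh r}\,dr\right)$, which vanishes like $r^{2m+n-2}$ as $r\to0^+$ (so $I(r)g(r)\to0$), and integrating from $0$ gives $I(r)g(r)=\int_0^rI(s)\sinh s\,\big(2P(s)+\tfrac{n^2}{2}\cosh s\big)\,ds>0$ for $r>0$; hence $g>0$ on $(0,\infty)$. Therefore $Q'/\mu$ is strictly increasing and $\eta_{(m)}(R)\sinh R$ is strictly increasing on $(0,\infty)$. (When $m=0$ this is the trivial case $P\equiv0$, $g=\tfrac n2\sinh^2r$.)

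\emph{Expected main obstacle.} The heart of the matter is Step 3: once monotonicity is rephrased as $P'>-\tfrac n2\sinh r$, there is (for general $n$) no explicit formula for $w_0$ to fall back on, and the nontrivial point is that the particular combination $g$ not only vanishes at $r=0$ but, thanks to the harmonic equation for $w_0$, obeys a \emph{linear} first-order ODE whose forcing term collapses to a manifestly nonnegative quantity involving only $P$ and $\cosh r$ — after which positivity of $g$ is automatic. Identifying this combination and verifying that the forcing term simplifies as claimed are the real work; everything else (the explicit formula of Step 1, the monotone-ratio principle, and the sign and regularity of $w_0$) is routine.
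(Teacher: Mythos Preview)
Your proof is correct and follows essentially the same approach as the paper. Your $P$ is the paper's $z_m$, and your key auxiliary function $g$ is exactly one half of the paper's $H=nh^2+2\tau_m-2(n-2)h'z_m-2z_m^2$; the paper shows $H>0$ by a first-zero contradiction (at $R_0$ with $H(R_0)=0$ one gets $H'(R_0)=\frac{2}{h}(2h^2z_m+\frac{n^2}{2}h^2h')>0$), which is the same computation you perform when you collapse the forcing term in your linear ODE for $g$ to $\sinh r\,(2P+\tfrac{n^2}{2}\cosh r)$. The only cosmetic differences are your use of the integral L'H\^opital/monotone-ratio principle in Step~2 (the paper differentiates $F=\eta_{(m)}\sinh R$ directly) and your integrating-factor argument in Step~3 in place of the contradiction at the first zero.
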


\subsection{Sharp bounds for eigenvalues on warped product manifolds}\label{subsec1.5}
Now let $M^n=[0,R]\times \SS^{n-1}$ be an $n$-dimensional $(n\geq 2)$ smooth Riemannian manifold equipped with the warped product metric
\begin{align*}
g=dr^2+h^2(r)g_{\SS^{n-1}},
\end{align*}
where $h(r)$ satisfies Assumption (A) given above.

In this part, first we get the following sharp bounds for the fourth-order Steklov eigenvalue $\xi_{(m)}$.
\begin{thm}\label{thm 14}
Let $M^n=[0,R]\times \SS^{n-1}$ be an $n$-dimensional ($n\geq 2$) smooth Riemannian manifold equipped with the warped product metric
\begin{equation*}
g=dr^2+h^2(r)g_{\SS^{n-1}},
\end{equation*}
where the warping function $h(r)$ satisfies Assumption (A). Suppose that $M$ has nonnegative Ricci curvature and a strictly convex boundary. Denote by $\xi_{(m)}$ the $m$th eigenvalue of the Steklov problem \eqref{problem2} without counting multiplicity. Then for $m\geq 1$ we have
\begin{equation}\label{ineq-xi2}
m^2(2+2m)\frac{h'(R)}{h^3(R)}\leq\xi_{(m)}\leq m^2(2+2m)\frac{1}{h^3(R)}, \quad n=2,
\end{equation}
and
\begin{align}\label{ineq-xi}
\xi_{(m)} \geq
\begin{cases}
m^2(3+2m)\dfrac{h'(R)}{h^3(R)}, & n = 3, \\[6pt]
m^2(n+2m)\dfrac{1}{h^3(R)}, & n \geq 4,
\end{cases}
\end{align}
with any equality holding only if $M$ is isometric to the Euclidean ball.
\end{thm}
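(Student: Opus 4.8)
The plan is to separate variables, obtain a closed expression for $\xi_{(m)}$, and then estimate it using the consequences of $\mathrm{Ric}\ge0$ and boundary convexity. By the separation of variables of Section~\ref{subsec2.3}, writing an eigenfunction as $u=f(r)\phi(\theta)$ with $\phi$ a spherical harmonic of degree $m$ on $\SS^{n-1}$ turns \eqref{problem2} into $L_m^2f=0$ on $(0,R)$, $f'(R)=0$, $(L_mf)'(R)+\xi f(R)=0$, together with smoothness at $r=0$, where $\mu:=m(m+n-2)$ and $L_mf:=h^{1-n}(h^{n-1}f')'-\mu h^{-2}f$. Since $w:=L_mf$ satisfies $L_mw=0$ and is regular at $0$, we have $w=\kappa\,p_m$, where $p_m$ is the scale‑unique solution of $L_mp_m=0$ regular at $0$, normalized by $p_m\sim r^m$; one has $p_m>0$ and $p_m'>0$ on $(0,R]$ and $(h^{n-1}p_m')'=\mu h^{n-3}p_m$. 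Two integrations by parts of $\int_0^R h^{n-1}p_m\,(L_mf)\,dr$, using $f'(R)=0$, give $\kappa\int_0^R h^{n-1}p_m^2\,dr=-h^{n-1}(R)p_m'(R)f(R)$, which combined with the Green‑identity relation $\xi\,h^{n-1}(R)f(R)^2=\int_0^R h^{n-1}w^2\,dr$ yields
\begin{equation*}
\xi_{(m)}=\frac{h^{n-1}(R)\,p_m'(R)^2}{\displaystyle\int_0^R h^{n-1}(r)\,p_m(r)^2\,dr}
\end{equation*}
(the same value also arises by completing the square in the Rayleigh quotient of $\xi_{(m)}$ against the profile $p_m$). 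On the Euclidean unit ball $p_m=r^m$ and the formula equals $m^2(n+2m)$, which pins down the extremal constants, and by scaling it becomes $m^2(n+2m)h^{-3}(R)$ on the ball whose boundary is isometric to $\{R\}\times\SS^{n-1}$.

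\textbf{Step 2: consequences of the hypotheses.} For $g=dr^2+h^2g_{\SS^{n-1}}$, $\mathrm{Ric}\ge0$ forces $h''\le0$, hence $0<h'\le1$ and $h(r)\le r$ on $[0,R]$, while strict convexity of $\partial M$ gives $h'(R)>0$. From the Riccati equation $v'+(n-1)\tfrac{h'}{h}v+v^2=\mu h^{-2}$ for $v:=p_m'/p_m$ one checks that at any point where $v=\phi:=mh'/h$ one has $(v-\phi)'=\mu\,\tfrac{1-(h')^2}{h^2}-\tfrac{mh''}{h}\ge0$, and that for $n\ge3$, at any point where $v=m/h$ one has $(v-m/h)'=\tfrac{m(n-2)(1-h')}{h^2}\ge0$. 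Since $v$, $\phi$ and $m/h$ all agree (to first order) as $r\to0$, this gives $v\ge\phi$, i.e. $p_m/h^m$ is nondecreasing, hence $p_m(r)\le p_m(R)\bigl(h(r)/h(R)\bigr)^m$ and $\sigma_{(m)}:=p_m'(R)/p_m(R)\ge mh'(R)/h(R)$, and moreover $\sigma_{(m)}\ge m/h(R)$ when $n\ge3$.

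\textbf{Step 3: the lower bounds (the crux).} Multiplying $(h^{n-1}p_m')'=\mu h^{n-3}p_m$ by $h^2p_m$ and integrating gives
\begin{equation*}
(\mu-n)\int_0^R h^{n-1}p_m^2\,dr+\int_0^R h^{n+1}(p_m')^2\,dr+\mathcal E=h^n(R)p_m(R)^2\bigl(h(R)\sigma_{(m)}-h'(R)\bigr),
\end{equation*}
where $\mathcal E:=n\int_0^R h^{n-1}\bigl(1-(h')^2\bigr)p_m^2\,dr-\int_0^R h^nh''p_m^2\,dr\ge0$; multiplying instead by $h^2p_m'$ yields a companion identity for $\int_0^R h^{n+1}(p_m')^2\,dr$ carrying the coefficient $\tfrac{n-3}{2}$, whose sign is exactly what distinguishes $n=2$, $n=3$ and $n\ge4$. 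Substituting $\int_0^R h^{n-1}p_m^2\,dr=h^{n-1}(R)p_m'(R)^2/\xi_{(m)}$ from Step~1 and inserting the comparisons of Step~2 (the estimate $\sigma_{(m)}\ge mh'(R)/h(R)$ for $n=2,3$, and $\sigma_{(m)}\ge m/h(R)$ together with $p_m(r)\le p_m(R)(h(r)/h(R))^m$ for $n\ge4$), one rearranges to \eqref{ineq-xi} and the left inequality of \eqref{ineq-xi2}; equality propagates back through $\mathcal E=0$ and through equality in every comparison, which forces $h''\equiv0$, $h'\equiv1$, i.e. $h(r)=r$ and $M$ a Euclidean ball. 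I expect this last step — the bookkeeping — to be the main obstacle: the errors in the $\sigma_{(m)}$‑comparison and in the bound on $\int h^{n-1}p_m^2$ are correlated, and majorizing them separately loses an extra factor $h'(R)^2$, so one must retain $\int_0^R h^{n+1}(p_m')^2\,dr$ and use the companion identity in order to recover the exact constant $m^2(n+2m)$ with the correct power of $h'(R)$ in each dimension.

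\textbf{Step 4: the upper bound for $n=2$, and rigidity.} Here I would test with the transplant of the Euclidean‑ball eigenfunction through the radial diffeomorphism $\Psi(r,\theta)=(h(r),\theta)$ from $M$ onto the flat disk $B_{h(R)}$ of radius $h(R)$: if $\hat u=\hat f(\rho)\phi(\theta)$, $\hat f(\rho)=(A+B\rho^2)\rho^m$, is the $\xi_{(m)}$‑eigenfunction of $B_{h(R)}$, then $u:=\hat u\circ\Psi$ has $\pt_\nu u=0$ on $\partial M$ and matches $\hat u$ on the boundary, so $\xi_{(m)}(M)\le\int_M(\Delta_gu)^2\,dV_g\,/\,\int_{\partial M}u^2$. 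A direct computation gives $\Delta_gu-(h')^2\,(\Delta\hat u)\!\circ\!\Psi=\bigl(\hat f'(h)h''-m^2h^{-2}(1-(h')^2)\hat f(h)\bigr)\phi$, and, since $h''\le0$ and $(h')^2\le1$, an integration by parts shows $\int_M(\Delta_gu)^2\,dV_g\le\int_{B_{h(R)}}(\Delta\hat u)^2\,dA=\tfrac{m^2(2+2m)}{h^3(R)}\int_{\partial M}u^2$, with equality again only for $h(r)=r$. Combining Steps~3 and~4 proves the theorem.
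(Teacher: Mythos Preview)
Your Steps~1 and~2 are correct and coincide with the paper: the closed formula $\xi_{(m)}=h^{n-1}(R)\,p_m'(R)^2\big/\int_0^R h^{n-1}p_m^2\,dr$ is exactly Lemma~\ref{lem4}, and the comparisons $z_m:=h\,p_m'/p_m\ge m$ and $p_m'/p_m\ge mh'/h$ are the results from \cite{Xio21} that the paper invokes.

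Step~3 is where your proposal stalls. The integral identity obtained by multiplying $(h^{n-1}p_m')'=\mu h^{n-3}p_m$ by $h^2p_m$ is correct, and the companion identity with coefficient $\tfrac{n-3}{2}$ exists, but you never show how to combine them with the comparisons of Step~2 to produce the sharp constant $m^2(n+2m)$ with the correct power of $h'(R)$; you yourself flag this as ``the main obstacle''. The paper bypasses all integral bookkeeping: it sets
\[
G(R)=h^{n+2}(R)\,(p_m'(R))^2-m^2(n+2m)\int_0^R h^{n-1}p_m^2\,dr
\]
for $n\ge4$, and $I(R)=h^{n+2}(R)\,(p_m'(R))^2/h'(R)-m^2(n+2m)\int_0^R h^{n-1}p_m^2\,dr$ for $n\le3$, and computes $G'$ (resp.\ $I'$) using the ODE for $p_m$ together with the Riccati relation \eqref{eq-z}. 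One obtains $G'=h^{n-1}p_m^2\bigl(-m^2(n+2m)+2\tau_m z_m-(n-4)h'z_m^2\bigr)$, and after eliminating $h'$ via \eqref{eq-z} this is a polynomial in $z_m$, $z_m'$ that is \emph{pointwise} nonnegative once one inserts $z_m\ge m$ and $z_m'\ge0$ (the latter is proved first, again by a one–line differential inequality for $F=z_m^2+(n-2)h'z_m-\tau_m$). The sharp constant is automatic because the polynomial vanishes exactly at $z_m=m$, $z_m'=0$. For $I'$ with $n\le3$ the argument is even shorter, using only $z_m\ge m$ and $h''\le0$.

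Step~4 contains a genuine error. With your transplant, both the error term $h''\hat f'(h)-m^2h^{-2}(1-(h')^2)\hat f(h)$ and $(\Delta\hat u)\circ\Psi=4B(m+1)h^m\phi$ are \emph{nonpositive} multiples of $\phi$ (recall $B<0$, $h''\le0$, $\hat f,\hat f'\ge0$ on $[0,h(R)]$). Hence $|\Delta_g u|\ge (h')^2\,|(\Delta\hat u)\circ\Psi|$ pointwise, and after accounting for the Jacobian $h'$ in the change of variables the inequality $\int_M(\Delta_gu)^2\,dV_g\le\int_{B_{h(R)}}(\Delta\hat u)^2\,dA$ you assert goes the wrong way; no integration by parts repairs this, since the cross term and the squared error are both nonnegative. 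The paper's mechanism for the $n=2$ upper bound is entirely different and rests on a fact you did not use: when $n=2$ the Riccati equation \eqref{eq-z} reads $z_m'=(m^2-z_m^2)/h$ with $z_m(0)=m$, so $z_m\equiv m$ \emph{identically}. Plugging this into the expression for $G'$ above gives $G'=2m^2(h'-1)\,h\,p_m^2\le0$, hence $G\le0$ and the upper bound in \eqref{ineq-xi2} follows at once, with equality forcing $h'\equiv1$.
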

\begin{rem}
The lower bound in \eqref{ineq-xi2} was obtained in \cite[Thm.~6]{Xio22} using a different argument, and the results in \eqref{ineq-xi} are stronger than those in \cite[Thm.~6]{Xio22}. Besides, geometrically $h'(R)/h(R)$ is the principal curvature of the boundary $\pt M$ in $M$, and $h(R)=(|\pt M|/|\SS^{n-1}|)^{1/(n-1)}$ can be viewed as the normalized boundary area.
\end{rem}
\begin{rem}
Assumptions as in Theorem ~\ref{thm 14} except $\mathrm{Ric}_g \geq 0$ replaced by $\mathrm{Ric}_g\leq 0$. Arguing as in the proof of Theorem ~\ref{thm 14}, we obtain that for $m\geq 1$,
\begin{equation}\label{ineq-xi2'}
m^2(2+2m)\frac{h'(R)}{h^3(R)}\geq\xi_{(m)}\geq m^2(2+2m)\frac{1}{h^3(R)}, \quad n=2,
\end{equation}
and
 \begin{align}\label{ineq-xi'}
\xi_{(m)} \leq
\begin{cases}
m^2(n+2m)\dfrac{h'(R)}{h^3(R)}, & n = 3, \\[6pt]
m^2(n+2m)\dfrac{1}{h^3(R)}, & n \geq 4.
\end{cases}
\end{align}
And any equality holds only if $M$ is isometric to the Euclidean ball. The details are left to the interested readers.
\end{rem}

For $n=3$ and $m=1$, the result in Theorem~\ref{thm 14} verifies the following Wang--Xia's conjecture in the case of warped product manifolds.

\begin{conj}[{Qiaoling~Wang and Changyu~Xia \cite[p.~13]{WX18}}]\label{conj2}
Let $(M^n,g)$ $(n\geq 2)$ be a connected compact smooth Riemannian manifold with boundary. Assume that $\mathrm{Ric}_g\geq 0$ and that the principal curvatures of the boundary $\pt M$ are bounded below by a constant $c>0$. Denote by $\lambda_1=\lambda_1(\pt M)$ the first nonzero eigenvalue of the Laplacian of $\pt M$. Then the first nonzero Steklov eigenvalue $\xi_1$ has a lower bound
\begin{equation*}
\xi_1\geq \frac{n+2}{n-1}c\lambda_1,
\end{equation*}
with the equality only for the Euclidean ball of radius $1/c$.
\end{conj}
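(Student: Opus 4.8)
The plan is to combine the variational characterization of $\xi_1$ with Reilly's formula, feeding the two curvature hypotheses $\mathrm{Ric}_g\geq 0$ and $\mathrm{II}\geq c\,g_{\partial M}$ through a Bochner-type integration by parts. Testing the weak form of \eqref{problem2} against functions with vanishing normal derivative gives
\begin{equation*}
\xi_1=\min\left\{\frac{\int_M (\Delta u)^2\,dV}{\int_{\partial M} u^2\,dA}\ :\ \frac{\partial u}{\partial\nu}=0\text{ on }\partial M,\ \int_{\partial M} u\,dA=0\right\},
\end{equation*}
the mean-zero constraint being $L^2(\partial M)$-orthogonality to the constant eigenfunction of $\xi_0=0$. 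I would let $u$ be a first eigenfunction and set $z=u|_{\partial M}$.

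The central identity is Reilly's formula
\begin{equation*}
\int_M\big[(\Delta u)^2-|\nabla^2 u|^2\big]\,dV=\int_M \mathrm{Ric}_g(\nabla u,\nabla u)\,dV+\int_{\partial M}\Big[H\Big(\tfrac{\partial u}{\partial\nu}\Big)^2+2\tfrac{\partial u}{\partial\nu}\Delta_{\partial M}z+\mathrm{II}(\nabla_{\partial M}z,\nabla_{\partial M}z)\Big]\,dA.
\end{equation*}
Since $\partial u/\partial\nu=0$, the first two boundary integrands drop out (independently of any sign convention), leaving
\begin{equation*}
\int_M (\Delta u)^2\,dV=\int_M|\nabla^2 u|^2\,dV+\int_M \mathrm{Ric}_g(\nabla u,\nabla u)\,dV+\int_{\partial M}\mathrm{II}(\nabla_{\partial M}z,\nabla_{\partial M}z)\,dA.
\end{equation*}
I would then invoke the hypotheses in turn: $\mathrm{Ric}_g\geq 0$ discards the middle term from below; convexity $\mathrm{II}\geq c\,g_{\partial M}$ gives $\int_{\partial M}\mathrm{II}(\nabla_{\partial M}z,\nabla_{\partial M}z)\geq c\int_{\partial M}|\nabla_{\partial M}z|^2$; and the mean-zero property of $z$ with the variational definition of $\lambda_1(\partial M)$ yields $\int_{\partial M}|\nabla_{\partial M}z|^2\geq\lambda_1\int_{\partial M}z^2=\lambda_1\int_{\partial M}u^2$.

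Combining these with the pointwise Cauchy--Schwarz bound $|\nabla^2 u|^2\geq(\Delta u)^2/n$ produces
\begin{equation*}
\int_M(\Delta u)^2\,dV\ \geq\ \frac1n\int_M(\Delta u)^2\,dV+c\lambda_1\int_{\partial M}u^2\,dA,
\end{equation*}
hence $\tfrac{n-1}{n}\,\xi_1\geq c\lambda_1$, that is, $\xi_1\geq\frac{n}{n-1}\,c\lambda_1$. This is where the method, as stated, stops.

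The hard part is the missing factor: the conjecture demands $\frac{n+2}{n-1}$, whereas the step replacing $|\nabla^2 u|^2$ by $(\Delta u)^2/n$ loses exactly the factor $\frac{n+2}{n}$. Writing $\mathring{\nabla^2 u}=\nabla^2 u-\frac{\Delta u}{n}g$ for the trace-free Hessian, the quantity discarded is precisely $\int_M|\mathring{\nabla^2 u}|^2\,dV$, and on the Euclidean unit ball, where the first eigenfunction is $u=(|x|^2-3)x_1$ with $\xi_1=n+2$, this term is strictly positive; so the sharp constant cannot be obtained by throwing it away. One must instead establish a lower bound of the shape $\int_M|\mathring{\nabla^2 u}|^2\,dV\geq\frac{2}{n-1}\,c\lambda_1\int_{\partial M}u^2\,dA$, saturated on the ball, and this is the genuine obstacle. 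In the rotationally symmetric situation of Theorem~\ref{thm 14} the trace-free Hessian can be computed through separation of variables and the gap closes, which is exactly how the conjecture is confirmed for $3$-dimensional warped products (the case $n=3$, $m=1$ reproducing $\xi_1\geq\frac{n+2}{n-1}\,c\lambda_1$). For a general manifold one would need either a refined Reilly/Pohozaev identity tying $\int_M|\mathring{\nabla^2 u}|^2$ to the boundary data of the biharmonic eigenfunction, or an Escobar-type comparison reducing the estimate to the model ball; securing such a bound, together with the rigidity forcing the Euclidean ball of radius $1/c$ at equality (umbilic Hessian, $\mathrm{Ric}_g(\nabla u,\nabla u)\equiv 0$, $\mathrm{II}\equiv c\,g_{\partial M}$, and $z$ a first boundary eigenfunction), is precisely where the difficulty lies and why the full conjecture remains open beyond symmetric models.
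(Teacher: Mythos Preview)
Your proposal is accurate in both content and conclusion, but note that the statement you were asked to prove is presented in the paper as a \emph{conjecture}, not a theorem: the paper does not claim a proof of it in full generality. What the paper does prove is the special case of warped product manifolds (via Theorem~\ref{thm 14} with $n=3$, $m=1$), exactly as you note in your final paragraph.

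Your Reilly-formula computation leading to $\xi_1\geq \frac{n}{n-1}c\lambda_1$ is correct and matches precisely what the paper cites as the 2013 result of Wang and Xia \cite{XW13}; the paper explicitly says ``Wang and Xia proved the non-sharp lower bound $\xi_1>nc\lambda_1/(n-1)$ in 2013 using the Reilly's formula.'' Your diagnosis of the obstruction---that the Cauchy--Schwarz step $|\nabla^2u|^2\geq (\Delta u)^2/n$ discards $\int_M|\mathring{\nabla^2 u}|^2$ and that this loss is exactly the missing factor $(n+2)/n$---is the right one, and your observation that the warped product setting allows this term to be controlled via separation of variables is consistent with how the paper proceeds. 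In short, you have correctly identified that the general conjecture remains open and reproduced the known partial results; there is no further proof in the paper to compare against.
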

Wang--Xia's conjecture can be viewed as the fourth-order counterpart of the famous Escobar's conjecture on the sharp lower bound for the first non-zero eigenvalue of the second-order Steklov eigenvalue problem \eqref{problem1} and is harder than Escobar's conjecture \cite[p.~115]{Esc99}; see \cite{XX24} for the recent progress on Escobar's conjecture. Regarding Wang--Xia's conjecture, previously Wang and Xia \cite{XW13} proved the non-sharp lower bound $\xi_1>nc\lambda_1/(n-1)$ in 2013 using the Reilly's formula. The second-named author \cite{Xio22} verified Wang--Xia's conjecture for warped product manifolds of the dimension $n=2$ or $n\geq 4$, leaving the case $n=3$ unsolved.

Second we deduce a two-sided inequality for the fourth-order Steklov eigenvalue $\eta_{(m)}$, and a sharp lower bound on the eigenvalue ratio for $\eta_{(m)}$.
\begin{thm}\label{thm 12}
Let $M^n=[0,R]\times \SS^{n-1}$ be an $n$-dimensional ($n\geq 2$) smooth Riemannian manifold equipped with the warped product metric
\begin{equation*}
g=dr^2+h^2(r)g_{\SS^{n-1}},
\end{equation*}
where the warping function $h(r)$ satisfies Assumption (A). Suppose that $M$ has nonnegative Ricci curvature and a strictly convex boundary. Denote by $\eta_{(m)}$ the $m$th eigenvalue of the Steklov problem \eqref{problem3} without counting multiplicity. Then for $m\geq 0$ we have
\begin{align}\label{ineq-eta}
(n+2m)\frac{h'(R)}{h(R)} \leq \eta_{(m)} \leq (n+2m)\frac{1}{h(R)},
\end{align}
and
\begin{align}\label{ineq-eta-ratio}
\frac{\eta_{(m+1)}}{\eta_{(m)}} \geq \frac{n+2m+2}{n+2m},
\end{align}
with the equality only if $M$ is isometric to the Euclidean ball.
\end{thm}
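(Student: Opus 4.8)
The plan is to separate variables, reduce all three assertions to sharp estimates of a single radial integral, and then establish those estimates by sub/supersolution comparison and by suitable test functions. By the analysis of Section~\ref{subsec2.3}, every eigenfunction of \eqref{problem3} on the warped product splits as $u(r,\theta)=\phi(r)Y(\theta)$, with $Y$ a spherical harmonic of degree $m$ (so $-\Delta_{\SS^{n-1}}Y=\mu_m Y$, $\mu_m=m(m+n-2)$) and $\phi$ regular at $r=0$, where I write $\mathcal L_m\phi:=h^{1-n}(h^{n-1}\phi')'-\mu_m h^{-2}\phi$. Let $\psi_m$ be the radial part of the harmonic extension of $Y$, normalised by $\psi_m(R)=1$; equivalently $\psi_m>0$, $\psi_m$ is strictly increasing, and $\mathcal L_m\psi_m=0$. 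After rescaling, the mode-$m$ eigenfunction has radial part $\phi_m$ with $\mathcal L_m\phi_m=\psi_m$, $\phi_m(R)=0$, and then the boundary condition in \eqref{problem3} gives $\eta_{(m)}=1/\phi_m'(R)$. Multiplying $\mathcal L_m\phi_m=\psi_m$ by $\psi_m$, integrating over $[0,R]$ and integrating by parts twice (using $\mathcal L_m\psi_m=0$ and the vanishing of the boundary terms at $r=0$) yields
\[
\eta_{(m)}=\frac{h^{n-1}(R)}{I_m},\qquad I_m:=\int_0^R h^{n-1}(r)\,\psi_m^2(r)\,dr,
\]
which extends \eqref{eq-eta0} from $m=0$. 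Thus \eqref{ineq-eta} is equivalent to $\dfrac{h^n(R)}{n+2m}\le I_m\le\dfrac{h^n(R)}{(n+2m)h'(R)}$ and \eqref{ineq-eta-ratio} to $\dfrac{I_{m+1}}{n+2m+2}\le\dfrac{I_m}{n+2m}$. Finally, $\mathrm{Ric}_g\ge0$ together with strict convexity of $\pt M$ is equivalent to $h''\le 0$ on $[0,R]$ and $h'(R)>0$; hence $0<h'(R)\le h'(r)\le 1$ on $[0,R]$, and $h$ is concave with $h(0)=0$.

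\emph{Lower bound for $\eta_{(m)}$.} A direct computation gives $\mathcal L_m(h^m)=m\,h^{m-2}\big[(m+n-2)((h')^2-1)+hh''\big]\le 0$ on $[0,R]$, so $h^m$ is a positive supersolution of $\mathcal L_m$. Comparing it with $\psi_m$ via $W:=h^m\psi_m'-mh^{m-1}h'\psi_m$, for which $(h^{n-1}W)'=-h^{n-1}\psi_m\,\mathcal L_m(h^m)\ge 0$ and $h^{n-1}W\to 0$ as $r\to 0$, forces $W\ge 0$, i.e. $\psi_m/h^m$ is non-decreasing, whence $\psi_m(r)\le(h(r)/h(R))^m$ on $[0,R]$. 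Therefore $I_m\le h(R)^{-2m}\int_0^R h^{n+2m-1}\,dr$; and since $h'\ge h'(R)$ on $[0,R]$ one has $h^{n+2m-1}\le\dfrac{(h^{n+2m})'}{(n+2m)h'(R)}$, so $\int_0^R h^{n+2m-1}\,dr\le\dfrac{h(R)^{n+2m}}{(n+2m)h'(R)}$. Combining, $I_m\le\dfrac{h(R)^n}{(n+2m)h'(R)}$, i.e. $\eta_{(m)}\ge(n+2m)h'(R)/h(R)$, with equality only when $h'\equiv 1$, i.e. when $M$ is the Euclidean ball. For $m=0$ this is immediate (with $\psi_0\equiv 1$), as is the other side of \eqref{ineq-eta} for $m=0$, both following from $h(R)^n=n\int_0^R h^{n-1}h'\,dr$ and $h'(R)\le h'\le 1$.

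\emph{Upper bound for $\eta_{(m)}$.} I would use the variational characterisation $\eta_{(m)}=\min\big\{\int_0^R(\mathcal L_m\phi)^2h^{n-1}dr\,\big/\,h^{n-1}(R)\phi'(R)^2:\ \phi\text{ regular at }0,\ \phi(R)=0,\ \phi'(R)\ne 0\big\}$ with the test function $\phi=h^{m+2}-h(R)^2h^m$, which is the actual eigenfunction when $h(r)=r$. One finds $\phi'(R)=2h'(R)h(R)^{m+1}$ and $\mathcal L_m\phi=2(n+2m)h^m+E$, where $E$ is an explicit combination of $h^m\big[(m+n)((h')^2-1)+hh''\big]$ and $h(R)^2h^{m-2}\big[(m+n-2)((h')^2-1)+hh''\big]$, both $\le 0$ by the sign conditions above. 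Expanding $(\mathcal L_m\phi)^2$ and using $h'\le 1$, $h''\le 0$ to control the cross term $\int_0^R h^{m+n-1}E\,dr$ and the quadratic term $\int_0^R E^2h^{n-1}\,dr$, one should obtain $\int_0^R(\mathcal L_m\phi)^2h^{n-1}\,dr\le 4(n+2m)\,h'(R)^2h(R)^{n+2m}$, i.e. $\eta_{(m)}\le(n+2m)/h(R)$, again with equality only for $h\equiv r$. The delicate point is that the principal term $4(n+2m)^2\int_0^R h^{n+2m-1}\,dr$ already exceeds the target unless $h'\equiv 1$, so one must show that the sign-indefinite contribution of $E$ provides exactly the compensating deficit; I expect this bookkeeping to be the hardest part here. (An alternative is to prove $\eta_{(m+1)}\le\eta_{(m)}+2/h(R)$ using the test function $h\phi_m$ for the mode-$(m+1)$ problem and iterate from the $m=0$ estimate.)

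\emph{Eigenvalue ratio.} Set $w:=\psi_{m+1}/\psi_m>0$, so $w(R)=1$ and $w\sim c\,r$ as $r\to 0$; from $\mathcal L_m\psi_m=0=\mathcal L_{m+1}\psi_{m+1}$ one gets $\big(h^{n-1}\psi_m^2\,w'\big)'=(2m+n-1)\,h^{n-3}\psi_m^2\,w$, a Sturm--Liouville problem of ``mode-$1$'' type with weight $p:=h^{n-1}\psi_m^2$. Since $I_{m+1}/I_m=\int_0^R w^2p\,dr\big/\int_0^R p\,dr$, the inequality \eqref{ineq-eta-ratio} is equivalent to $\int_0^R w^2p\,dr\le\frac{n+2m}{n+2m+2}\int_0^R p\,dr$. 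I would prove this by pairing the displayed ODE for $w$ with the multipliers $w$ and a primitive of $p$, converting the resulting integral identities into the desired inequality with the help of $\psi_m\le(h/h(R))^m$ and the concavity of $h$; equivalently, one may insert $\phi_{m+1}/h$ — which is admissible for the mode-$m$ problem (it behaves like $c\,r^m$ at $0$, vanishes at $R$, and is proportional to $\phi_m$ when $h\equiv r$) — into the variational quotient of the previous step and show its value is $\le\frac{n+2m}{n+2m+2}\eta_{(m+1)}$. Equality again forces $h\equiv r$. As a by-product, \eqref{ineq-eta-ratio} together with the base estimate $\eta_{(0)}\ge n\,h'(R)/h(R)$ re-proves the lower bound in \eqref{ineq-eta} by induction, while the upper bound comes from the test-function step; and the equality cases of all the steps coincide with $M$ being a Euclidean ball, completing the proof. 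I regard the sharp ratio inequality, with its equality characterisation, as the principal obstacle.
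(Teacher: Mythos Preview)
Your setup and the integral formula $\eta_{(m)}=h^{n-1}(R)/I_m$ match the paper (this is its Lemma~\ref{lem4}), and your lower bound is correct and essentially the paper's argument in different clothing: the paper records the supersolution comparison as $u_m'/u_m\ge mh'/h$ and integrates, which is your $\psi_m\le(h/h(R))^m$.

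The gap is in the upper bound. Your own diagnosis is right and fatal: for the test function $\phi=h^{m+2}-h(R)^2h^m$ the principal term $4(n+2m)^2\int_0^R h^{n+2m-1}\,dr$ already \emph{exceeds} the target $4(n+2m)h'(R)^2h(R)^{n+2m}$ whenever $h'\not\equiv 1$, and the correction $E$ has no reason to produce an exactly compensating deficit (one piece of $E$ has each sign). Perturbatively this is the familiar phenomenon that a fixed test function matches the true eigenvalue only to first order, so at second order its Rayleigh quotient lies \emph{above} $\eta_{(m)}$; there is no mechanism forcing it below $(n+2m)/h(R)$. The paper avoids the variational route entirely: it sets $G=nh'u_m+2hu_m'-(n+2m)u_m$ and proves the first-order differential inequality $G'+c_1(r)G\le 0$ with $c_1=\big((n-4)h'+(n+2m)\big)/(2h)$, whence $G\le 0$ and $F=h^nu_m^2-(n+2m)\int_0^R h^{n-1}u_m^2\,dr\le 0$. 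The key algebraic fact (playing the role your ``bookkeeping'' never supplies) is the elementary identity $4\tau_m-\big((n+2m)-nh'\big)\big((n+2m)+(n-4)h'\big)\le 0$ for $0<h'\le 1$.

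Your ratio argument is likewise only a sketch; the suggested pairings are not carried out, and the alternative test function $\phi_{m+1}/h$ suffers from the same second-order defect as above. The paper again proceeds by a direct monotonicity: with $\Delta y=y_{m+1}-y_m$ it reduces to showing $N=(2m+n-1)u_m-(n-1)h'u_m-2hu_m'\ge 0$, obtained from another differential inequality $N'+c_2(r)N\ge 0$ via the inequality $(2m+n-1)^2-2(2m+n-1)h'-(n-1)(n-3)(h')^2-4\tau_m\ge 0$ for $0<h'\le 1$. If you want to salvage your outline, the decisive step you are missing in both places is a pointwise first-order differential inequality of this type; variational test functions will not give the sharp constant here.
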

\begin{rem}
The lower bound of $\eta_{(0)}$ in \eqref{ineq-eta} was proved in \cite[Thm.~1.2]{WX09} for a general setting. For the upper bound of $\eta_{(0)}$, as noted by Kuttler \cite{Kut72} as well as Wang and Xia \cite[Thm.~1.3]{WX09}, the first eigenvalue satisfies an isoperimetric inequality $\eta_{(0)}\leq |\pt M|/|M|$ for a general Riemannian manifold. However, this upper bound of theirs is trivial in our setting of warped product manifolds, since we have the equality due to the expression of $\eta_{(0)}$ in \eqref{eq-eta0} below. Besides, for the case $n\geq 3$ and $m\geq 2$ the lower bound in \eqref{ineq-eta} improves on \cite[Thm.~7]{Xio22}, and for $n=2$ the bound \eqref{ineq-eta-ratio} was obtained in \cite[Thm.~8]{Xio21} using a different argument.
\end{rem}
\begin{rem}
Assumptions as in Theorem ~\ref{thm 12} except $\mathrm{Ric}_g \geq 0$ replaced by $\mathrm{Ric}_g\leq 0$. Arguing as in the proof of Theorem ~\ref{thm 12} (for $n=2$, the lower bound in \eqref{ineq-eta'} needs a special argument similar to that for the upper bound in ~\eqref{ineq-xi2}), we obtain that for $m\geq 1$,
\begin{align}\label{ineq-eta'}
(n+2m)\frac{1}{h(R)} \leq \eta_{(m)} \leq (n+2m)\frac{h'(R)}{h(R)}, \text{ for } n=2 \text{ or } n\geq 4,
\end{align}
\begin{align}\label{ineq-eta''}
\eta_{(m)} \leq (n+2m)\frac{h'(R)}{h(R)}, \text{ for } n=3.
\end{align}
For $m=0$, we have
\begin{align}\label{ineq-eta'''}
n\frac{1}{h(R)} \leq \eta_{(0)} \leq n\frac{h'(R)}{h(R)}, \text{ for all } n\geq 2.
\end{align}
And any equality holds only if $M$ is isometric to the Euclidean ball. Here we note that, when applied in the case $n=3$, the method used in the proof of Theorem ~\ref{thm 12} yields only a one-sided bound for $\eta_{(m)}$ $(m\geq 1)$. Moreover, this approach does not provide a ratio estimate for $\eta_{(m+1)}/\eta_{(m)}$ $(m\geq0)$ in any dimension $n$. The details are left to the  interested readers.
\end{rem}

The remaining part of the paper is organized as follows. In Section~\ref{sec2} we give some preliminaries including basic facts on warped product manifolds, geodesic balls and geometric factors in space forms, and the Steklov eigenvalue problems in consideration. In Section~\ref{sec3} we prove Theorems~\ref{thm2} and \ref{thm3}, and in Section~\ref{sec4} we prove Theorems~\ref{thm5} and \ref{thm6}. Next in Section~\ref{sec5} we give the proofs of Corollaries~\ref{thm7}, \ref{thm8} and \ref{thm9}. In Section~\ref{sec6} we discuss scaling inequalities in the higher-dimensional cases and prove Theorems~\ref{thm 10} and \ref{thm 11}. In Section~\ref{sec7} we present the proofs of sharp bounds on warped product manifolds, i.e., Theorems~\ref{thm 14} and \ref{thm 12}. In the last Section~\ref{sec8} we mention some open problems.
\begin{ack}
This research was supported by NSFC (Grant no. 12171334) and National Key R and D Program of China 2021YFA1001800.
\end{ack}

\section{Preliminaries}\label{sec2}
In this section we present some preliminaries which are needed in the proofs of our main results.
\subsection{The general setting of warped product manifolds}\label{subsec2.1} In this paper we are mainly concerned with warped product manifolds which include the sphere and the hyperbolic space as special cases. Let $M^n=[0,R]\times \SS^{n-1}$ be an $n$-dimensional $(n\geq 2)$ smooth Riemannian manifold equipped with the warped product metric
\begin{align*}
g=dr^2+h^2(r)g_{\SS^{n-1}},
\end{align*}
where the warping function $h(r)$ satisfies the assumption
\begin{itemize}
  \item[(A)] $h\in C^\infty([0,R])$, $h(r)>0$ for $r\in (0,R]$, $h'(0)=1$ and $h^{(2k)}(0)=0$ for all integers $k\geq 0$.
\end{itemize}
The Assumption (A) is imposed to guarantee that $M$ is a topological ball and is smooth at the origin; see Section~4.3.4 in Petersen's book \cite{Pet16}.

The following lemma, which is Lemma~8 in \cite{Xio22}, will be important in the proofs of Theorem ~\ref{thm 14} and Theorem~\ref{thm 12}.
\begin{lem}[{\cite[Lem.~8]{Xio22}}]\label{lem1}
Let $M^n=[0,R]\times \SS^{n-1}$ be an $n$-dimensional ($n\geq 2$) smooth Riemannian manifold equipped with the warped product metric
\begin{equation*}
g=dr^2+h^2(r)g_{\SS^{n-1}},
\end{equation*}
where the warping function $h(r)$ satisfies Assumption (A). Assume that $\mathrm{Ric}_g\geq 0$ and that the boundary $\pt M$ is strictly convex. Then we have
\begin{equation}
h''(r)\le 0,\quad 0<h'(r)\le 1,\quad r\in[0,R].
\end{equation}
\end{lem}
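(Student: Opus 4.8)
The plan is to read the three conclusions directly off the standard curvature identities for rotationally symmetric metrics. Recall (see, e.g., \cite{Pet16}) that for the warped product metric $g=dr^2+h^2(r)g_{\SS^{n-1}}$ the Ricci tensor is diagonalized by the obvious orthonormal frame, with radial component
\begin{equation*}
\mathrm{Ric}_g(\pt_r,\pt_r)=-(n-1)\frac{h''(r)}{h(r)},
\end{equation*}
valid for $r\in(0,R]$. First I would use this: since $h(r)>0$ on $(0,R]$, the hypothesis $\mathrm{Ric}_g\geq 0$ forces $h''(r)\leq 0$ on $(0,R]$, and since $h$ is smooth on $[0,R]$ with $h''(0)=0$ by Assumption (A), we get $h''\leq 0$ on all of $[0,R]$. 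That is the first assertion.

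Next, $h''\leq 0$ says that $h'$ is non-increasing on $[0,R]$; together with the normalization $h'(0)=1$ from Assumption (A) this yields $h'(r)\leq 1$ for all $r\in[0,R]$. For the positivity $h'(r)>0$ I would invoke the boundary hypothesis. The boundary $\pt M=\{R\}\times\SS^{n-1}$ is a rotationally symmetric hypersurface, and a direct computation gives $\n_X\pt_r=\frac{h'(R)}{h(R)}X$ for $X$ tangent to the $\SS^{n-1}$ factor, so its second fundamental form with respect to the outward unit normal $\nu=\pt_r$ equals $\frac{h'(R)}{h(R)}$ times the induced metric; in particular all principal curvatures of $\pt M$ equal $h'(R)/h(R)$. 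Strict convexity of $\pt M$ then means $h'(R)/h(R)>0$, hence $h'(R)>0$ since $h(R)>0$, and because $h'$ is non-increasing we conclude $h'(r)\geq h'(R)>0$ on $[0,R]$, which finishes the proof.

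I do not anticipate a genuine difficulty here — the lemma is essentially a bookkeeping exercise with the warped-product curvature and shape-operator formulas. The only two points needing care are (i) that the curvature identity, stated on the open range $r\in(0,R]$ where the $\SS^{n-1}$ fibres are non-degenerate, propagates to $r=0$ using the smoothness and parity conditions packaged in Assumption (A); and (ii) fixing the sign convention for the second fundamental form so that ``strictly convex'' translates to $h'(R)>0$ rather than $h'(R)<0$. Note that only the radial Ricci component and the strict convexity of $\pt M$ are actually used; the tangential Ricci components impose further restrictions on $h$ but are not needed for these three inequalities.
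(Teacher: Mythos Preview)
Your argument is correct: the three inequalities follow exactly as you describe from the radial Ricci identity $\mathrm{Ric}_g(\pt_r,\pt_r)=-(n-1)h''/h$, the monotonicity of $h'$, and the shape-operator computation $\kappa=h'(R)/h(R)$ for the boundary. Note that the paper does not give its own proof of this lemma --- it simply quotes it as Lemma~8 of \cite{Xio22} --- so there is nothing to compare against here; your write-up is precisely the standard verification one would expect to find in that reference.
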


\subsection{Geodesic balls and geometric factors in the sphere and the hyperbolic space}\label{subsec2.2}
For the warped product manifold in Section~\ref{subsec2.1}, if $h(r)=\sin r$, we get the geodesic ball in the sphere
\begin{align*}
\SS^n=\{x\in \R^{n+1}|\sum_{i=1}^{n+1}(x^i)^2=1\}
\end{align*}
with the metric induced by the metric
\begin{align*}
ds^2=\sum_{i=1}^{n+1}(dx^i)^2
\end{align*}
on the Euclidean space $\R^{n+1}$. It is also called the spherical cap of the geodesic radius or the aperture $R$. The geometric factors mentioned before in the $2$-dimensional case include the geodesic radius $R$, the Euclidean radius of the boundary circle $\sin R$, the stereographic radius $\tan (R/2)$, and the area $4\pi \sin^2 (R/2)$ (see Figure~\ref{Fig1}).

If $h(r)=\sinh r$, we get the geodesic ball in the hyperbolic space
\begin{align*}
\H^n=\{x\in \R^{n+1}_1|x^{n+1}=\sqrt{1+\sum_{i=1}^{n}(x^i)^2}\}
\end{align*}
with the metric induced by the metric
\begin{align*}
ds^2=\sum_{i=1}^{n}(dx^i)^2-(dx^{n+1})^2
\end{align*}
on the Lorentzian space $\R^{n+1}_1$. The geometric factors for the $2$-dimensional geodesic disk include the geodesic radius $R$, the Euclidean radius of the boundary circle $\sinh R$, the stereographic radius $\tanh (R/2)$, and the area $4\pi \sinh^2 (R/2)$ (see Figure~\ref{Fig2}).

\begin{figure}
\centering
\begin{tikzpicture}[scale=3.2]

 \draw [<-] (0,1.55) node [right] {$z$}--(0,-1.25) ;
 \draw [->] (-1.2,0)--(1.2,0)node [right] {$x$};
 \node [below left] at (0,0) {$0$};
 \node [below left] at (0,1) {$1$};
 \node [below right] at (0,1) {$N$};
 \node [below right] at (0,-1) {$S$};
 \node [below right] at (0.25,1.03) {$R$};
 \node [below left] at (-0.09,1) {$\mathrm{Area}=4\pi \sinh^2(R/2)$};
 \draw [<-] (0.17,-0.05)-- (0.35,-0.2) node [below right] {$\tanh (R/2)$};
 \draw [<-] (0.3,1.28)--(0.4,1.4) node [above right] {$\sinh R$};
 \draw [dashed] (0,0)--(0.75,1.25);
 \draw [dashed] (0,-1)--(0.75,1.25);
 \draw [dashed] (0,1.25)--(0.75,1.25);
 \draw [-] (-0.75,1.25) to [out=-20, in=200] (0.75,1.25);
 \draw [dashed] (-0.75,1.25) to [out=15, in=165] (0.75,1.25);
 \draw [domain=-0.9:0.9] plot (\x,{sqrt(1+\x*\x)});
\end{tikzpicture} 
\caption{Geometric factors in $\H^2$}\label{Fig2}
\end{figure}
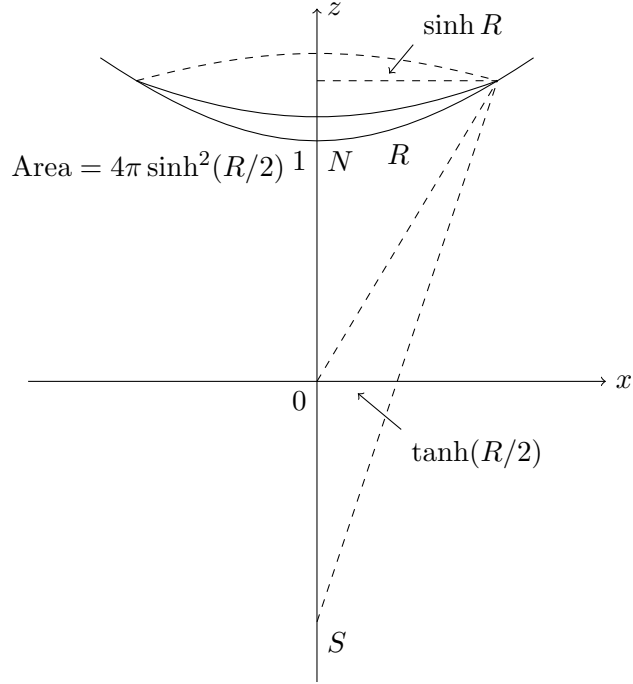

%

\subsection{Second-order and fourth-order Steklov eigenvalue problems}\label{subsec2.3}
 Let $(M^n,g)$ be an $n$-dimensional Riemannian manifold with boundary $\pt M$. We consider the second-order Steklov eigenvalue problem
\begin{equation}\label{problem2.2}
	\begin{cases}
		\Delta u =0,&\text{ in }  M, \\
		\dfrac{\partial u }{\partial \nu }=\sigma u, &\text{ on }  \partial M,
	\end{cases}
\end{equation}
 where $\nu$ denotes the outward unit normal to $\partial M$.

 The spectrum of the classical second-order Steklov eigenvalue problem is nonnegative, discrete and unbounded (counting multiplicity):
 \begin{align}
    0=\sigma_0<\sigma_1\le\sigma_2\le\dots\nearrow+\infty.\nonumber
\end{align}
We denote by $\sigma_{(m)}$ the eigenvalues without counting multiplicity. For instance, for the $n$-dimensional Euclidean ball $B_R$ with radius $R$, we have
\begin{equation*}
\sigma_{(0)}=\sigma_0=0,\quad \sigma_{(1)}=\sigma_1=\cdots=\sigma_{n}=\frac{1}{R},
\end{equation*}
and $\sigma_{(m)}=m/R$ with multiplicity $C_{n+m-1}^{n-1}-C_{n+m-3}^{n-1}$ for $m\geq 2$ (see e.g. \cite{GP17}).

For warped product manifolds, we have the following characterization of all its eigenfunctions and eigenvalues.
\begin{prop}[{\cite[Lem.~3]{Esc00}\cite[Prop.~9]{Xio22}}]\label{prop1}
Let $M^n=[0,R]\times \SS^{n-1}$ be an $n$-dimensional ($n\geq 2$) smooth Riemannian manifold equipped with the warped product metric
\begin{equation*}
g=dr^2+h^2(r)g_{\SS^{n-1}},
\end{equation*}
where the warping function $h(r)$ satisfies Assumption (A). Then any nontrivial eigenfunction $\varphi$ of the problem \eqref{problem2.2} can be written as $\varphi(r,p)=\psi(r)\omega(p)$, where $\omega$ is a spherical harmonic on $\SS^{n-1}$ of some degree $m\geq 1$, i.e.,
\begin{equation*}
-\Delta_{\SS^{n-1}}\omega=\tau_m\omega \text{ on }\SS^{n-1},\quad \tau_m=m(n-2+m),
\end{equation*}
and $\psi$ is a nontrivial solution of the ODE
\begin{equation}\label{2nd}
\begin{cases}
\dfrac{1}{h^{n-1}}\dfrac{d}{dr}(h^{n-1}\dfrac{d}{dr}\psi)-\dfrac{\tau_m\psi}{h^2}=0,\quad &r\in (0,R],\\
\psi(0)=0.&
\end{cases}
\end{equation}
For any nontrivial solution $\psi$ to the above ODE, the $m$th eigenvalue $\sigma_{(m)}$ without counting multiplicity is given by $\sigma_{(m)}=\psi'(R)/\psi(R)$.
\end{prop}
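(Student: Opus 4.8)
The plan is to separate variables along the fibre $\SS^{n-1}$ and reduce the eigenvalue problem to a one-parameter family of singular radial ODEs. First I would record that on the warped product $\sqrt{\det g}=h^{n-1}\sqrt{\det g_{\SS^{n-1}}}$, so that for $f=f(r,p)$
\[
\Delta_g f=\frac{1}{h^{n-1}}\pt_r\!\bigl(h^{n-1}\pt_r f\bigr)+\frac{1}{h^2}\Delta_{\SS^{n-1}}f,
\]
and that the outward unit normal on $\pt M=\{r=R\}$ is $\nu=\pt_r$. Fix an $L^2(\SS^{n-1})$-orthonormal basis $\{\omega_{m,j}\}$ of spherical harmonics with $-\Delta_{\SS^{n-1}}\omega_{m,j}=\tau_m\omega_{m,j}$, $\tau_m=m(n-2+m)$. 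Given a nontrivial eigenfunction $\varphi$ of \eqref{problem2.2} with eigenvalue $\sigma$, elliptic regularity (interior harmonicity plus the Robin-type Steklov condition on the smooth boundary) makes $\varphi$ smooth on $M$; writing $\varphi(r,p)=\sum_{m,j}\psi_{m,j}(r)\omega_{m,j}(p)$ with $\psi_{m,j}(r)=\int_{\SS^{n-1}}\varphi(r,\cdot)\,\overline{\omega_{m,j}}$, the equation $\Delta_g\varphi=0$ projects onto $\frac{1}{h^{n-1}}(h^{n-1}\psi_{m,j}')'-\tau_m h^{-2}\psi_{m,j}=0$ on $(0,R]$, while the Steklov condition projects onto $\psi_{m,j}'(R)=\sigma\,\psi_{m,j}(R)$. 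It then remains to classify, for each $m$, those radial solutions for which $\psi_{m,j}(r)\omega_{m,j}(p)$ extends smoothly to $M$ across $r=0$.

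For the local analysis at the collapsing fibre, Assumption (A) gives $h(r)=r+O(r^3)$, so the radial ODE has a regular singular point at $r=0$ with indicial equation $\alpha(\alpha-1)+(n-1)\alpha-\tau_m=0$, i.e.\ $\alpha^2+(n-2)\alpha-m(m+n-2)=0$, whose roots are $\alpha=m$ and $\alpha=-(m+n-2)$. For $m\geq1$ the exponent $-(m+n-2)<0$ yields an unbounded solution, which cannot arise from a smooth function on $M$; the exponent $m$ yields $\psi_m(r)=r^m(1+O(r^2))$, and $\psi_m(r)\,\omega_{m,j}(p)$ does extend smoothly across $r=0$ — this matching of the power $r^m$ with a degree-$m$ spherical harmonic is precisely the condition for smoothness of $M$ at the origin guaranteed by Assumption (A); see Section~4.3.4 of \cite{Pet16}. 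Hence for each $m\geq1$ the \emph{admissible} radial solution is unique up to a multiplicative constant, and satisfies $\psi_m(0)=0$ as in \eqref{2nd}. For $m=0$ the ODE forces $h^{n-1}\psi_0'$ to be constant, and boundedness of $\psi_0'$ near $r=0$ forces this constant to vanish, so $\psi_0$ is constant; this recovers only the trivial eigenfunction with eigenvalue $\sigma_0=0$.

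To finish, I would show that $\psi_m'(R)/\psi_m(R)$ is a well-defined positive number which increases strictly with $m$. Normalizing $\psi_m\sim r^m>0$ near $0$, the identity $(h^{n-1}\psi_m')'=\tau_m h^{n-3}\psi_m$ shows $h^{n-1}\psi_m'$ is nondecreasing wherever $\psi_m>0$, and since $h^{n-1}\psi_m'\to0$ as $r\to0^+$ we get $\psi_m'>0$, hence $\psi_m$ stays positive and increasing on $(0,R]$; in particular $\psi_m(R)>0$ and $\sigma^{(m)}:=\psi_m'(R)/\psi_m(R)>0$. Strict monotonicity $\sigma^{(1)}<\sigma^{(2)}<\cdots$ follows from a Sturm comparison in $\tau_m$, cleanly phrased through the Riccati equation for $v_m:=\psi_m'/\psi_m$, namely $v_m'=\tau_m h^{-2}-(n-1)(h'/h)v_m-v_m^2$ with $v_m(r)\sim m/r$ as $r\to0^+$: if $v_{m'}>v_m$ (valid near $0$ since $m'>m$) first fails at some $r_0$, then $v_{m'}'(r_0)-v_m'(r_0)=(\tau_{m'}-\tau_m)h^{-2}(r_0)>0$, a contradiction. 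Combining this with the decomposition of the first paragraph, each nonzero coefficient $\psi_{m,j}$ of an eigenfunction must be a multiple of $\psi_m$ and thus forces $\sigma=\sigma^{(m)}$; distinctness of the $\sigma^{(m)}$ forces a single degree $m$ to occur, so $\varphi=\psi_m(r)\omega(p)$ with $\omega$ a degree-$m$ spherical harmonic, the distinct eigenvalues are exactly $0=\sigma_{(0)}<\sigma^{(1)}<\sigma^{(2)}<\cdots$, and $\sigma_{(m)}=\psi_m'(R)/\psi_m(R)$. I expect the main obstacle to be the local analysis at $r=0$: one must use genuine smoothness of $\varphi$ on the manifold $M$ — not merely boundedness — to discard the $\alpha=-(m+n-2)$ branch and, conversely, to justify that the $r^m$ branch paired with a degree-$m$ harmonic extends smoothly across the point where the $\SS^{n-1}$ fibre degenerates.
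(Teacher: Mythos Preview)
The paper does not supply its own proof of this proposition; it is quoted from \cite[Lem.~3]{Esc00} and \cite[Prop.~9]{Xio22} and used as a black box. Your argument is correct and is essentially the standard one: expand in spherical harmonics using the warped-product form of $\Delta_g$, run a Frobenius analysis at the regular singular point $r=0$ to isolate the unique admissible radial solution (and hence the condition $\psi(0)=0$), and then use a Riccati/Sturm comparison in $\tau_m$ to show the values $\psi_m'(R)/\psi_m(R)$ are strictly increasing in $m$, so that an eigenfunction involves a single degree. This is exactly the route taken in the cited references, and the paper later repeats the Riccati step (see \eqref{eq6.3}) for other purposes, so your outline meshes well with the rest of the exposition.

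One minor point of phrasing: the proposition's ``nontrivial'' is meant to exclude the constant eigenfunction (the $m=0$, $\sigma=0$ case), which you correctly identify; you might state this explicitly so the quantifier ``some degree $m\geq 1$'' in the statement is visibly accounted for.
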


Next we consider the type one fourth-order Steklov eigenvalue problem
\begin{equation}\label{type1}
    \begin{cases}
        \Delta^{2} u =0, \text{ in } M,\\
      \dfrac{\partial u}{\partial\nu}=0,\quad  \dfrac{\partial(\Delta u)}{\partial \nu} +\xi u=0, \text{ on } \partial M.
    \end{cases}
\end{equation}
It was pioneered by Kuttler and Sigillito in 1968 \cite{KS68}, and its eigenvalues form a discrete, increasing sequence (counting multiplicity):
\begin{align}
    0=\xi_0<\xi_1\le\xi_2\le\dots\nearrow+\infty.\nonumber
\end{align}
We also use $\xi_{(m)}$ to denote the eigenvalues without counting multiplicity. For the $n$-dimensional Euclidean ball $B_R$ with radius $R$, we have $\xi_{(m)}=m^2(n+2m)/R^3$ with multiplicity $C_{n+m-1}^{n-1}-C_{n+m-3}^{n-1}$ (see \cite[Theorem~1.5]{XW18}).

We have the following characterization of all its eigenfunctions and eigenvalues.
\begin{prop}[{\cite[Prop.~11]{Xio22}}]\label{prop3}
Let $M^n=[0,R]\times \SS^{n-1}$ be an $n$-dimensional ($n\geq 2$) smooth Riemannian manifold equipped with the warped product metric
\begin{equation*}
g=dr^2+h^2(r)g_{\SS^{n-1}},
\end{equation*}
where the warping function $h(r)$ satisfies Assumption (A). Suppose that $M$ has nonnegative Ricci curvature and a strictly convex boundary. Then any nontrivial eigenfunction $\varphi$ of the problem \eqref{type1} can be written as $\varphi(r,p)=\psi(r)\omega(p)$, where $\omega$ is a spherical harmonic on $\SS^{n-1}$ of some degree $m\geq 1$, i.e.,
\begin{equation*}
-\Delta_{\SS^{n-1}}\omega=\tau_m\omega \text{ on }\SS^{n-1},\quad \tau_m=m(n-2+m),
\end{equation*}
and $\psi$ is a nontrivial solution of the ODE
\begin{equation}\label{eq4}
\begin{cases}
\dfrac{1}{h^{n-1}}\dfrac{d}{dr}(h^{n-1}\dfrac{d}{dr}\psi)-\dfrac{\tau_{m}\psi}{h^2}=\tilde{\psi},&\\
\dfrac{1}{h^{n-1}}\dfrac{d}{dr}(h^{n-1}\dfrac{d}{dr}\tilde{\psi})-\dfrac{\tau_m\tilde\psi}{h^2}=0,&\\
\psi(0)=0,\quad  \psi'(R)=0,\quad \tilde{\psi}(0)=0.&
\end{cases}
\end{equation}
For any nontrivial solution $\psi$ to the above ODE, the $m$th eigenvalue $\xi_{(m)}$ without counting multiplicity is given by $\xi_{(m)}=-\tilde{\psi}'(R)/\psi(R)$.
\end{prop}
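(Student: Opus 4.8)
The plan is to exploit the rotational symmetry of the warped product and separate variables. On $(M,g)$ one has
\[
\Delta u=\frac{1}{h^{n-1}}\,\pt_r\!\big(h^{n-1}\pt_r u\big)+\frac{1}{h^{2}}\Delta_{\SS^{n-1}}u ,
\]
so $\Delta$, hence $\Delta^{2}$ and both boundary operators in \eqref{type1}, commute with the $O(n)$-action on $M=[0,R]\times\SS^{n-1}$ coming from the $\SS^{n-1}$-factor; therefore the isotypic components of an eigenfunction are again solutions. Concretely I would expand a nontrivial eigenfunction $\varphi$ with eigenvalue $\xi$ as $\varphi=\sum_{m\ge0}\varphi_m$, where $\varphi_m(r,p)=\sum_j\psi_{m,j}(r)\,\omega_{m,j}(p)$ with $\{\omega_{m,j}\}$ an orthonormal basis of degree-$m$ spherical harmonics, $-\Delta_{\SS^{n-1}}\omega_{m,j}=\tau_m\omega_{m,j}$, $\tau_m=m(n-2+m)$. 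Setting $\tilde\psi_{m,j}:=\tfrac{1}{h^{n-1}}(h^{n-1}\psi_{m,j}')'-\tfrac{\tau_m}{h^{2}}\psi_{m,j}$, so that $\Delta(\psi_{m,j}\omega_{m,j})=\tilde\psi_{m,j}\omega_{m,j}$, the interior equation $\Delta^{2}\varphi=0$ becomes, mode by mode, $\tfrac{1}{h^{n-1}}(h^{n-1}\tilde\psi_{m,j}')'-\tfrac{\tau_m}{h^{2}}\tilde\psi_{m,j}=0$; these are exactly the two interior ODEs of \eqref{eq4}.

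Next I would read off the side conditions. Since $\varphi$, hence $v:=\Delta\varphi$, is smooth up to the boundary by elliptic regularity, smoothness at the origin forces $\psi_{m,j}(0)=0$ and $\tilde\psi_{m,j}(0)=0$ for every degree $m\ge1$; this is the indicial analysis at the regular singular point $r=0$ of the radial operator, where Assumption (A) singles out the solution behaving like $r^{m}$ (cf. Section~\ref{subsec2.1}). The condition $\pt\varphi/\pt\nu=0$ on $\pt M$ is $\pt_r\varphi|_{r=R}=0$, i.e. $\psi_{m,j}'(R)=0$; and $\pt(\Delta\varphi)/\pt\nu+\xi\varphi=0$ on $\pt M$ reads $\tilde\psi_{m,j}'(R)+\xi\,\psi_{m,j}(R)=0$. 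Thus each nonzero component solves precisely the system \eqref{eq4}, and the eigenvalue must equal $-\tilde\psi_{m,j}'(R)/\psi_{m,j}(R)$.

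I would then analyze the radial system \eqref{eq4}. The homogeneous equation for $\tilde\psi$ with $\tilde\psi(0)=0$ has, up to scale, the single solution $\psi_{\mathrm{hom}}$ of \eqref{2nd}; given such a $\tilde\psi$, the inhomogeneous equation with $\psi(0)=0$ determines $\psi$ up to adding a multiple of $\psi_{\mathrm{hom}}$, and the condition $\psi'(R)=0$ fixes that multiple because $\psi_{\mathrm{hom}}'(R)=\sigma_{(m)}\psi_{\mathrm{hom}}(R)\neq0$ — indeed, from \eqref{2nd} and Assumption (A) one gets $\psi_{\mathrm{hom}}>0$ and $\psi_{\mathrm{hom}}'>0$ on $(0,R]$, so $\sigma_{(m)}>0$ (recall Proposition~\ref{prop1}). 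Hence \eqref{eq4} has a one-dimensional solution space, so the radial profile $\psi=:\psi_m$ is determined up to scale and $\xi_{(m)}:=-\tilde\psi_m'(R)/\psi_m(R)$ is well defined once $\psi_m(R)\neq0$. To see the latter I would multiply the first equation of \eqref{eq4} by $h^{n-1}\tilde\psi_m$, integrate over $[0,R]$, and integrate by parts twice using $\psi_m(0)=\tilde\psi_m(0)=0$, $\psi_m'(R)=0$ and the second equation of \eqref{eq4}, obtaining
\[
-\,h^{n-1}(R)\,\tilde\psi_m'(R)\,\psi_m(R)=\int_0^R h^{n-1}\tilde\psi_m^{2}\,dr>0,
\]
the positivity because $\tilde\psi_m\not\equiv0$ (otherwise $\psi_m$ would solve the homogeneous \eqref{2nd} with $\psi_m(0)=0$ and $\psi_m'(R)=0$, forcing $\psi_m\equiv0$ and $\varphi\equiv0$). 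So $\psi_m(R)\neq0$ and $\xi_{(m)}>0$.

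The remaining, and I expect principal, point is to pass from ``each angular mode $m$ contributes the eigenvalue $-\tilde\psi_m'(R)/\psi_m(R)$'' to the full statement: every nontrivial eigenfunction is a single product $\psi_m(r)\omega(p)$, and $\xi_{(m)}$ so defined is the $m$th eigenvalue in increasing order. Since for $\xi\neq-\tilde\psi_m'(R)/\psi_m(R)$ the mode-$m$ system together with the last boundary relation has only the trivial solution, a component $\varphi_m$ is nonzero only when $\xi$ equals that value, and within a fixed degree $m$ all $\psi_{m,j}$ coincide with $\psi_m$ up to scale; so the claim reduces to showing that the numbers $-\tilde\psi_m'(R)/\psi_m(R)$ are strictly increasing in $m$ (hence distinct, hence matching the enumeration $0=\xi_0<\xi_1\le\xi_2\le\cdots$). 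I expect this monotonicity in $m$ — equivalently in the separation constant $\tau_m$ — to be exactly where the hypotheses $\mathrm{Ric}_g\ge0$ and strict convexity of $\pt M$ enter, via the bounds $h''\le0$, $0<h'\le1$ of Lemma~\ref{lem1} used in a Sturm-type comparison of the radial solutions for different $\tau$. The other places requiring care are the indicial/smoothness bookkeeping at $r=0$ and the double integration by parts above, but those I expect to be routine.
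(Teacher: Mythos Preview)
The paper does not prove this proposition at all: it is quoted verbatim from \cite[Prop.~11]{Xio22} and no argument is supplied. So there is no ``paper's own proof'' to compare against. That said, your outline is correct and matches the structure of the proof in \cite{Xio22} as the authors themselves describe it: the Remark following Lemma~\ref{lem4} states explicitly that the hypotheses $\mathrm{Ric}_g\ge0$ and strict convexity of $\pt M$ are used in \cite{Xio22} \emph{only} to determine the order of the eigenvalues, i.e.\ precisely the monotonicity-in-$m$ step you singled out as the principal point. Your separation of variables, indicial analysis at $r=0$, one-dimensionality of the solution space of \eqref{eq4}, and the integration-by-parts identity showing $\psi_m(R)\neq0$ are all correct and routine.

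One small omission: you should say why the $m=0$ component of a nontrivial eigenfunction vanishes (equivalently, why the proposition restricts to $m\ge1$). For $m=0$ the smoothness condition at the origin is $\psi'(0)=0$, $\tilde\psi'(0)=0$ rather than $\psi(0)=\tilde\psi(0)=0$; running your system with $\tau_0=0$ then forces $\tilde\psi\equiv0$ and $\psi\equiv\mathrm{const}$, so the $m=0$ mode contributes only the trivial eigenvalue $\xi_{(0)}=0$ with constant eigenfunction. This is why nontrivial eigenfunctions (those with $\xi>0$) carry only modes $m\ge1$.
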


Then we consider the type two fourth-order Steklov eigenvalue problem
\begin{equation}\label{type2}
    \begin{cases}
        \Delta^{2} u =0, \text{ in }M,\\
 u=0,\quad \Delta u =\eta \dfrac{\partial u}{\partial \nu}, \text{ on }\partial M.
    \end{cases}
\end{equation}
It was initially studied by Kuttler and Sigillito in 1968 \cite{KS68} and by Payne in 1970 \cite{Pay70}, and its eigenvalues form a discrete, increasing sequence (counting multiplicity):
\begin{align}
    0<\eta_0<\eta_1\le\eta_2\le\dots\nearrow+\infty.\nonumber
\end{align}
Note that the first eigenvalue $\eta_0$ is positive and simple (see \cite[Theorem~1]{BGM06} or \cite{RS15}). We also use $\eta_{(m)}$ to denote the eigenvalues without counting multiplicity. For the $n$-dimensional Euclidean ball $B_R$ with radius $R$, we know $\eta_{(m)}=(n+2m)/R$ with multiplicity $C_{n+m-1}^{n-1}-C_{n+m-3}^{n-1}$ (see \cite[Theorem~1.3]{FGW05}).

Similarly, we have the following characterization of all its eigenfunctions and eigenvalues.
\begin{prop}[{\cite[Prop.~14]{Xio22}}]\label{prop4}
Let $M^n=[0,R]\times \SS^{n-1}$ be an $n$-dimensional ($n\geq 2$) smooth Riemannian manifold equipped with the warped product metric
\begin{equation*}
g=dr^2+h^2(r)g_{\SS^{n-1}},
\end{equation*}
where the warping function $h(r)$ satisfies Assumption (A). Suppose that $M$ has nonnegative Ricci curvature and a strictly convex boundary. Then the first eigenfunction $\varphi_0(r,p)$ of the problem \eqref{type2} is given by $\psi_0(r)$ up to a constant multiple, where $\psi_0$ is a nontrivial solution of the ODE
\begin{equation}\label{eq5.3}
\begin{cases}
\dfrac{1}{h^{n-1}}\dfrac{d}{dr}(h^{n-1}\dfrac{d}{dr}\psi)=\tilde{\psi},&\\
\dfrac{1}{h^{n-1}}\dfrac{d}{dr}(h^{n-1}\dfrac{d}{dr}\tilde{\psi})=0,&\\
\psi'(0)=0,\quad \psi(R)=0,\quad \tilde{\psi}'(0)=0.&
\end{cases}
\end{equation}
Any higher-order eigenfunction $\varphi$ of the problem \eqref{type2} can be written as $\varphi(r,p)=\psi(r)\omega(p)$, where $\omega$ is a spherical harmonic on $\SS^{n-1}$ of some degree $m\geq 1$, i.e.,
\begin{equation*}
-\Delta_{\SS^{n-1}}\omega=\tau_m\omega \text{ on }\SS^{n-1},\quad \tau_m=m(n-2+m),
\end{equation*}
and $\psi$ is a nontrivial solution of the ODE
\begin{equation}\label{eq5.4}
\begin{cases}
\dfrac{1}{h^{n-1}}\dfrac{d}{dr}(h^{n-1}\dfrac{d}{dr}\psi)-\dfrac{\tau_{m}\psi}{h^2}=\tilde{\psi},&\\
\dfrac{1}{h^{n-1}}\dfrac{d}{dr}(h^{n-1}\dfrac{d}{dr}\tilde{\psi})-\dfrac{\tau_m\tilde\psi}{h^2}=0,&\\
\psi(0)=0,\quad  \psi(R)=0,\quad \tilde{\psi}(0)=0.&
\end{cases}
\end{equation}
For any nontrivial solution $\psi$ to the ODE \eqref{eq5.3} or \eqref{eq5.4}, the $m$th eigenvalue $\eta_{(m)}$ ($m\geq 0$) without counting multiplicity is given by $\eta_{(m)}=\tilde{\psi}(R)/\psi'(R)$.
\end{prop}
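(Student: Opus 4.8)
The plan is to reduce the fourth-order problem \eqref{type2} to a family of second-order radial systems by separating variables along the spherical-harmonic decomposition, exactly in the spirit of Proposition~\ref{prop1} for the second-order problem. The starting point is the expression of the Laplacian on the warped product: for $u(r,p)=\psi(r)\omega(p)$ with $\omega$ a spherical harmonic of degree $m$ (so $-\Delta_{\SS^{n-1}}\omega=\tau_m\omega$, $\tau_m=m(n-2+m)$), one computes $\Delta u=(L_m\psi)\,\omega$, where $L_m\psi:=h^{1-n}(h^{n-1}\psi')'-\tau_m h^{-2}\psi$ is the radial operator appearing on the left-hand sides of \eqref{eq5.3} and \eqref{eq5.4}. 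Setting $\tilde{\psi}:=L_m\psi$, the biharmonic equation $\Delta^2u=\Delta(\Delta u)=(L_m\tilde{\psi})\,\omega=0$ is then equivalent to the coupled first two equations of the system.

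First I would justify the separation itself. Since the degree-$m$ spherical harmonics $\{Y_{m,k}\}$ form an $L^2(\SS^{n-1})$-orthonormal basis and $\Delta$ (hence $\Delta^2$) acts diagonally on each block through $L_m$, writing $\varphi(r,p)=\sum_{m,k}\psi_{m,k}(r)Y_{m,k}(p)$ shows that $\Delta^2\varphi=0$ decouples into $L_m^2\psi_{m,k}=0$ for each mode, and one may take a basis of each eigenspace consisting of separated functions $\psi(r)\omega(p)$. The boundary conditions respect the decomposition as well: on $\pt M=\{r=R\}$ the outward normal is $\partial_r$, so $u=0$ gives $\psi(R)=0$, while $\Delta u=\eta\,\partial_\nu u$ reads $\tilde{\psi}(R)=\eta\,\psi'(R)$, yielding the eigenvalue formula $\eta_{(m)}=\tilde{\psi}(R)/\psi'(R)$. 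For a nontrivial eigenfunction one has $\psi'(R)\neq0$ (otherwise $u$ would have vanishing Cauchy data $u=\partial_\nu u=0$ on $\pt M$, which together with regularity at the origin forces $\psi\equiv0$), so the ratio is well defined; here the hypotheses $\mathrm{Ric}_g\ge0$ and strict convexity, via Lemma~\ref{lem1}, underpin the spectral framework while the separation and the formula are structural.

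The delicate step — and the one I expect to be the main obstacle — is the behaviour at the singular point $r=0$ and the resulting origin conditions in \eqref{eq5.3}--\eqref{eq5.4}. Here Assumption~(A) is essential: it makes $M$ a genuinely smooth ball (Petersen, \S4.3.4, \cite{Pet16}) with $h(r)=r+O(r^3)$, so that near $r=0$ the operator $L_m$ is a regular-singular perturbation of the Euclidean radial Laplacian, with indicial equation $\alpha^2+(n-2)\alpha-\tau_m=0$ and roots $\alpha=m$ and $\alpha=-(n-2+m)$. Requiring both $u=\psi\omega$ and $\Delta u=\tilde{\psi}\omega$ to extend smoothly across the origin forces $\psi$ and $\tilde{\psi}$ to be the regular ($r^m$) solutions, which for a degree-$m$ harmonic is precisely the statement $\psi(0)=0,\ \tilde{\psi}(0)=0$ when $m\ge1$; in the radial case $\tau_0=0$ the roots become $0$ and $-(n-2)$, so smoothness instead gives $\psi'(0)=0,\ \tilde{\psi}'(0)=0$. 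I would make this rigorous through a Frobenius/indicial analysis showing that the stated vanishing conditions select exactly the two-dimensional subspace of solutions of $L_m^2\psi=0$ that yield a smooth function on $M$; imposing $\psi(R)=0$ then cuts this down to a one-parameter family, and $\eta_{(m)}=\tilde{\psi}(R)/\psi'(R)$ is the corresponding eigenvalue.

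It remains to identify the first eigenfunction as radial. The space of spherical harmonics of any degree $m\ge1$ has dimension $C_{n+m-1}^{n-1}-C_{n+m-3}^{n-1}\ge n\ge 2$, so any eigenvalue arising from a mode with $m\ge1$ would have multiplicity at least two. Since the first eigenvalue $\eta_0$ is simple (as cited, \cite[Thm.~1]{BGM06}, \cite{RS15}), it must come solely from the radial mode $m=0$; hence $\varphi_0=\psi_0(r)$ solves the system \eqref{eq5.3}, while every higher-order eigenfunction corresponds to some degree $m\ge1$ and solves \eqref{eq5.4}, completing the proof.
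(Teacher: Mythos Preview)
The paper does not prove this proposition; it is quoted from \cite[Prop.~14]{Xio22} and only used thereafter, so there is no in-paper argument to compare against directly. Your outline of the separation of variables, the expression of $\Delta$ through the radial operator $L_m$, the Frobenius/indicial analysis at $r=0$ selecting the regular solutions, and the derivation of $\eta_{(m)}=\tilde\psi(R)/\psi'(R)$ from the boundary conditions are all correct and constitute the structural part of the cited result.

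There is, however, a genuine gap in your final paragraph. The simplicity of $\eta_0$ does force the first eigenfunction to come from the $m=0$ block, but nothing you wrote shows that the eigenvalue produced by the degree-$m$ system \eqref{eq5.4} is the $m$th one in the ordered list $\eta_{(0)}<\eta_{(1)}<\eta_{(2)}<\cdots$. A~priori the numbers $\tilde\psi(R)/\psi'(R)$ coming from different degrees could appear in any order, and the last sentence of the Proposition --- that this ratio \emph{is} $\eta_{(m)}$ --- is exactly the assertion that they do not. This monotonicity in $m$ is precisely where the hypotheses $\mathrm{Ric}_g\ge0$ and strict convexity of $\partial M$ are used; the present paper says so explicitly in the Remark following Lemma~\ref{lem4} (``this assumption is only used to determine the order of the eigenvalues''). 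Your sentence that Lemma~\ref{lem1} ``underpins the spectral framework'' does not supply this step: what is actually needed is an ODE comparison argument (via Lemma~\ref{lem1}, in the spirit of the $z_m$ analysis in Section~\ref{sec7} or the arguments in \cite{Xio22,Xio21}) showing that the map $m\mapsto \tilde\psi(R)/\psi'(R)$ is strictly increasing. Without it, the identification of the $m$th eigenvalue remains unproved.
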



To give proofs for  Theorem~\ref{thm 11}, Theorem ~\ref{thm 14} and Theorem~\ref{thm 12}, we need the following key lemma.
\begin{lem}\label{lem4}
Let $M^n=[0,R]\times \SS^{n-1}$ be an $n$-dimensional ($n\geq 2$) smooth Riemannian manifold equipped with the warped product metric
\begin{equation*}
g=dr^2+h^2(r)g_{\SS^{n-1}},
\end{equation*}
where the warping function $h(r)$ satisfies Assumption (A). Suppose that $M$ has nonnegative Ricci curvature and a strictly convex boundary. For the fourth-order Steklov eigenvalues $\xi_{(m)}$ and $\eta_{(m)}$, we have
\begin{align}
\xi_{(m)}&=\frac{h^{n-1}(R)(u_m'(R))^2}{\int_0^R h^{n-1}u_m^2 dr},\quad m\geq 1,\label{eq-xi}\\
\eta_{(m)}&=\frac{h^{n-1}(R)u_m^2(R)}{\int_0^R h^{n-1}u_m^2 dr},\quad m\geq 1,\label{eq-eta}
\end{align}
where $u_m$ is a non-trivial solution of
\begin{equation}\label{eq-ode}
\left\{
\begin{aligned}
&u_m'' + \frac{(n-1)h'}{h}\,u_m' - \frac{\tau_m}{h^2}u_m = 0, \\[6pt]
&u_m(0)=0,
\end{aligned}
\right.
\end{equation}
and
\begin{equation}\label{eq-eta0}
\eta_{(0)}=\frac{h^{n-1}(R)}{\int_0^R h^{n-1} dr}.
\end{equation}
\end{lem}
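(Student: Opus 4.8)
The plan is to reduce all three formulas to a single integration by parts for the second-order operator
\[
\mathcal{L}_m\psi := \frac{1}{h^{n-1}}\frac{d}{dr}\bigl(h^{n-1}\psi'\bigr)-\frac{\tau_m}{h^2}\psi = \psi''+\frac{(n-1)h'}{h}\psi'-\frac{\tau_m}{h^2}\psi,
\]
which is exactly the operator appearing in the systems \eqref{eq4} and \eqref{eq5.4} as well as in \eqref{eq-ode}; in particular $\mathcal{L}_m u_m=0$. First I would note that in \eqref{eq4} and \eqref{eq5.4} the auxiliary function $\tilde\psi$ satisfies $\mathcal{L}_m\tilde\psi=0$ and is regular at $r=0$ (it is the second component built from a smooth eigenfunction on $M$, and $M$ is smooth at the origin by Assumption~(A)). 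Since $\mathcal{L}_m\tilde\psi=0$ has a one-dimensional space of solutions regular at $r=0$ — the other solution behaving like $r^{-(n-2+m)}$ there — we conclude $\tilde\psi=c\,u_m$ for some constant $c$; and because the formulas $\xi_{(m)}=-\tilde\psi'(R)/\psi(R)$ and $\eta_{(m)}=\tilde\psi(R)/\psi'(R)$ from Propositions~\ref{prop3}--\ref{prop4} are unchanged under the common rescaling $(\psi,\tilde\psi)\mapsto(\lambda\psi,\lambda\tilde\psi)$, we may assume $\tilde\psi=u_m$. Thus $\psi$ solves $\mathcal{L}_m\psi=u_m$ with $\psi(0)=0$, supplemented by $\psi'(R)=0$ in the case of $\xi_{(m)}$ and by $\psi(R)=0$ in the case of $\eta_{(m)}$, $m\ge1$.

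The heart of the matter is the identity, immediate from $\mathcal{L}_m\psi=u_m$ and $\mathcal{L}_mu_m=0$,
\[
\frac{d}{dr}\Bigl(h^{n-1}\bigl(u_m\psi'-u_m'\psi\bigr)\Bigr)=h^{n-1}u_m^2 .
\]
Integrating over $[0,R]$, the contribution at $r=0$ vanishes because $h^{n-1}(r)\to0$ while $u_m\psi'-u_m'\psi$ stays bounded (indeed tends to $0$, since $u_m(0)=\psi(0)=0$ and $u_m',\psi'$ are bounded near the origin), so
\[
h^{n-1}(R)\bigl(u_m(R)\psi'(R)-u_m'(R)\psi(R)\bigr)=\int_0^R h^{n-1}u_m^2\,dr .
\]
For $\xi_{(m)}$, putting $\psi'(R)=0$ gives $\psi(R)=-\bigl(h^{n-1}(R)u_m'(R)\bigr)^{-1}\int_0^R h^{n-1}u_m^2\,dr$, which together with $\tilde\psi'(R)=u_m'(R)$ yields \eqref{eq-xi}; for $\eta_{(m)}$ ($m\ge1$), putting $\psi(R)=0$ gives $\psi'(R)=\bigl(h^{n-1}(R)u_m(R)\bigr)^{-1}\int_0^R h^{n-1}u_m^2\,dr$, which with $\tilde\psi(R)=u_m(R)$ yields \eqref{eq-eta}. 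That $u_m(R)\ne0$ and $u_m'(R)\ne0$ follows either from the strict positivity and monotonicity of $u_m$ on $(0,R]$ (since $(h^{n-1}u_m')'=\tau_m h^{n-3}u_m$ and $u_m\sim r^m$ near $0$) or directly from the fact that $\sigma_{(m)}=u_m'(R)/u_m(R)$ is a finite positive number by Proposition~\ref{prop1}.

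For $\eta_{(0)}$ the computation is shorter: here $\tau_0=0$, so \eqref{eq5.3} forces $\tilde\psi$ to satisfy $\bigl(h^{n-1}\tilde\psi'\bigr)'=0$ with $\tilde\psi'(0)=0$, hence $h^{n-1}\tilde\psi'\equiv0$ and $\tilde\psi$ is a nonzero constant, which we normalize to $1$; then $\bigl(h^{n-1}\psi'\bigr)'=h^{n-1}$ with $\psi'(0)=0$ integrates to $\psi'(R)=h^{1-n}(R)\int_0^R h^{n-1}\,dr$, and $\eta_{(0)}=\tilde\psi(R)/\psi'(R)$ gives \eqref{eq-eta0}. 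The one genuinely delicate point in the whole argument is the behavior at the singular endpoint $r=0$: controlling the asymptotics of solutions there so as to (i) identify $\tilde\psi$ with a multiple of $u_m$ and (ii) discard the boundary term in the integration by parts. This, however, is exactly what the smoothness of $M$ at the origin (guaranteed by Assumption~(A); see Section~4.3.4 of \cite{Pet16}) takes care of.
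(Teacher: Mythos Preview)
Your proof is correct and follows essentially the same route as the paper: set $\tilde\psi=u_m$, use the Lagrange identity $\frac{d}{dr}\bigl(h^{n-1}(u_m\psi'-u_m'\psi)\bigr)=h^{n-1}u_m^2$, integrate, and apply the boundary conditions from Propositions~\ref{prop3}--\ref{prop4}. If anything you are slightly more thorough than the paper, which simply writes ``letting $\tilde\psi=u_m$'' without the regularity discussion and handles $\eta_{(0)}$ by saying ``a similar argument works''; your explicit justification of these points and of the vanishing boundary term at $r=0$ is welcome.
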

\begin{rem}
The assumption that $\mathrm{Ric}_g\geq 0$ and $\pt M$ is strictly convex is indeed unnecessary. This assumption is imposed in order to invoke Propositions~\ref{prop3} and \ref{prop4} which corresponds to respectively Propositions~11 and 14 in \cite{Xio22}. By careful inspecting the proofs of Propositions~11 and 14 in \cite{Xio22}, we may see that this assumption is only used to determine the order of the eigenvalues. Here in the proof below, we may first derive the formulas \eqref{eq-xi} and \eqref{eq-eta}, and then use ODE comparison arguments directly to determine the order of the eigenvalues. The details are left to the interested readers.
\end{rem}
\begin{rem}
In some of the proofs below, we will assume without loss of generality and tacitly that $u_m(r)>0$ and $u_m'(r)>0$ for $r\in (0,R]$.
\end{rem}
\begin{proof}[Proof of Lemma ~\ref{lem4}]
%
%
%

First for the eigenvalue $\xi_{(m)}$ $(m\geq1)$, considering the ODE \eqref{eq4} and letting $\tilde{\psi}=u_m$ there, we know that
\begin{equation}\label{xixi}
\xi_{(m)}=-\frac{u_m'(R)}{\psi(R)}.
\end{equation}

Set
\begin{align*}
Ly=\frac{1}{h^{n-1}}\frac{d}{dr}\left(h^{n-1}\frac{dy}{dr}\right)
 - \frac{\tau_m y}{h^2}.
\end{align*}
For any two functions $p, q \in C^2([0,R])$, by direct computation we get
\begin{align*}
pLq-qLp=\frac{1}{h^{n-1}}\frac{d}{dr}\left(h^{n-1}\left(pq'-p'q\right)\right).
\end{align*}
Now choose
\begin{align*}
p=u_m,\quad  q=\psi.
\end{align*}
Then we have
\begin{equation}\label{eq6.1}
h^{n-1}u_m^2=\frac{d}{dr}\left(h^{n-1}\left(u_m\psi'-u_m'\psi\right)\right).
\end{equation}\label{2.12}
Integrating both sides in \eqref{eq6.1} over the interval $(0,R)$ and using the boundary condition
\begin{align*}
h(0)=0,\quad \psi(0)=0,\quad \psi'(R)=0,\quad u_m(0)=0,
\end{align*}
we have
\begin{equation}\label{2.14}
\int_0^R h^{n-1}u_m^2 dr=-h^{n-1}(R)u_m'(R)\psi(R).
\end{equation}
Combining ~\eqref{xixi} and ~\eqref{2.14} to eliminate $\psi(R)$, we obtain
\begin{align*}
\xi_{(m)}&=\frac{h^{n-1}(R)(u_m'(R))^2}{\int_0^R h^{n-1}u_m^2 dr}.
\end{align*}


Next for the eigenvalue $\eta_{(m)}$ ($m\geq 1$), considering the ODE \eqref{eq5.4} and letting $\tilde{\psi}=u_m$ there, we know that
\begin{equation}\label{2.15}
\eta_{(m)}=\frac{u_m(R)}{\psi'(R)}.
\end{equation}
Again integrating both sides in \eqref{eq6.1} with respect to $r$ from $0$ to $R$, we have
\begin{equation}\label{2.16}
\int_0^R h^{n-1}u_m^2 dr=h^{n-1}(R)u_m(R)\psi'(R).
\end{equation}
Combining ~\eqref{2.15} and ~\eqref{2.16} to eliminate $\psi'(R)$, we obtain
\begin{align*}
\eta_{(m)}&=\frac{h^{n-1}(R)u_m^2(R)}{\int_0^R h^{n-1}u_m^2 dr}.
\end{align*}

Last, when $m=0$, we may check that a similar argument as above works, yielding
\begin{align*}
\eta_{(0)}=\frac{h^{n-1}(R)}{\int_0^R h^{n-1} dr}.
\end{align*}
Thus we complete the proof.
\end{proof}


\section{Proofs of results in the $2$-dimensional sphere}\label{sec3}
For the proofs in this section, we need the following lemma.
\begin{lem}\label{lem2}
In the $2$-dimensional sphere, we have
\begin{equation}\label{eq3.4}
\xi_{(m)}=\frac{m^2(\tan\frac{R}{2})^{2m}}{\sin R\int_0^R(\tan\frac{r}{2})^{2m}\sin r dr},\quad m\geq 1,
\end{equation}
\begin{equation}\label{eq3.5}
\eta_{(m)}=\frac{(\tan\frac{R}{2})^{2m}\sin R}{\int_0^R(\tan\frac{r}{2})^{2m}\sin r dr},\quad m\geq 1,
\end{equation}
and
\begin{equation}\label{eq-eta_0}
\eta_{(0)}=\cot\frac{R}{2}.
\end{equation}
\end{lem}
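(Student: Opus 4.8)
The plan is to specialize Lemma~\ref{lem4} to $n=2$ and $h(r)=\sin r$. Then $\tau_m=m(n-2+m)=m^2$ and $h^{n-1}(R)=\sin R$, so the ODE \eqref{eq-ode} becomes
\begin{equation*}
u_m''+\cot r\,u_m'-\frac{m^2}{\sin^2 r}\,u_m=0,\qquad u_m(0)=0.
\end{equation*}
The crux is to write down the solution explicitly: I claim that $u_m(r)=\bigl(\tan\tfrac{r}{2}\bigr)^m$ is a nontrivial solution vanishing at $r=0$. Since $r=0$ is a regular singular point with indicial roots $\pm m$, the solution vanishing there is unique up to a constant multiple, and $\bigl(\tan\tfrac{r}{2}\bigr)^m\sim(r/2)^m$ near the origin is of the admissible branch; as the ratios in \eqref{eq-xi} and \eqref{eq-eta} are invariant under rescaling $u_m$, this is the only solution we need.

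To verify the claim I would substitute $t=\tan\tfrac{r}{2}$ and use $\sin r=\tfrac{2t}{1+t^2}$, $\cos r=\tfrac{1-t^2}{1+t^2}$, $\tfrac{dt}{dr}=\tfrac{1+t^2}{2}$; a brief computation turns the equation into the Euler equation $t^2y''+ty'-m^2y=0$, whose solutions are $t^{\pm m}$, confirming $u_m=t^m$. (Equivalently, one may note that $u_m(r)\,\omega(p)$ with $\omega$ a degree-$m$ spherical harmonic on $\SS^1$ is a $g$-harmonic function on the geodesic disk, and that in dimension two the stereographic coordinate $\rho=\tan\tfrac{r}{2}$ makes $g$ pointwise conformal to the flat metric, so $u_m(r)\,\omega(p)$ is Euclidean-harmonic and hence a constant multiple of $\rho^m$ times an angular factor.)

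It then remains to substitute back. From $u_m(R)=\bigl(\tan\tfrac{R}{2}\bigr)^m$ and, using $\sin R=\tfrac{2\tan(R/2)}{\sec^2(R/2)}$,
\begin{equation*}
u_m'(R)=\frac{m}{2}\Bigl(\tan\tfrac{R}{2}\Bigr)^{m-1}\sec^2\tfrac{R}{2}=\frac{m\bigl(\tan\frac{R}{2}\bigr)^m}{\sin R},
\end{equation*}
plugging into \eqref{eq-xi} and \eqref{eq-eta} and rearranging the factors of $\sin R$ yields \eqref{eq3.4} and \eqref{eq3.5}. For the first eigenvalue, \eqref{eq-eta0} gives $\eta_{(0)}=\sin R/\int_0^R\sin r\,dr=\sin R/(1-\cos R)$, and the half-angle identity $\sin R/(1-\cos R)=\cot\tfrac{R}{2}$ yields \eqref{eq-eta_0}.

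The only genuine step here is the verification that $\bigl(\tan\tfrac{r}{2}\bigr)^m$ solves the ODE together with the identification of $u_m'(R)$; everything else is routine manipulation of elementary trigonometric identities.
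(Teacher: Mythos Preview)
Your proof is correct and follows essentially the same route as the paper: reduce the ODE via the substitution $t=\tan(r/2)$ to the Euler--Cauchy equation, pick the regular branch $u_m=t^m$, and substitute into the formulas of Lemma~\ref{lem4}. Your extra remarks (the indicial-root justification for uniqueness and the conformal/harmonic interpretation) are nice but not needed, and the paper's write-up is slightly terser in omitting the explicit computation of $u_m'(R)$ and the half-angle identity for $\eta_{(0)}$.
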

\begin{proof}[Proof of Lemma~\ref{lem2}]
In the sphere $\SS^2$,  for $m\geq 1$ the equation \eqref{eq-ode} becomes
\begin{align*}
u_m''+\frac{u_m'}{\tan r}-\frac{m^2u_m}{\sin^2 r}=0.
\end{align*}
Introducing the change of variable $t=\tan(r/2)$, we have
\begin{align*}
(u_m)_{tt}+\frac{1}{t}(u_m)_t-\frac{m^2}{t^2}u_m=0.
\end{align*}
This is an Euler--Cauchy equation, whose general solution is explicitly given by
\begin{align*}
u_m(t)=C_1t^m+C_2t^{-m}.
\end{align*}
Imposing regularity at $t=0$ rules out the term $t^{-m}$, so we obtain
\begin{align*}
u_m(r)=C(\tan\frac{r}{2})^{m}.
\end{align*}
Substituting this expression into ~\eqref{eq-xi} and ~\eqref{eq-eta}, we complete the proof for $m\geq 1$. For $m=0$, taking $n=2$ and $h(r)=\sin r$ in ~\eqref{eq-eta0}, we may see that \eqref{eq-eta_0} holds. Thus we finish the proof.
\end{proof}
Before we proceed, for readers' convenience it is worth pointing out that the following computation formula will be used frequently in the proofs of Theorem ~\ref{thm2} and Theorem ~\ref{thm3}:
\begin{align}\label{eq3.1}
\frac{d((\tan\frac{R}{2})^{k})}{dR}=k\frac{(\tan\frac{R}{2})^{k-1}}{2\cos^2\frac{R}{2}}=k\frac{(\tan\frac{R}{2})^{k}}{\sin R}.
\end{align}


Now we are ready to present the proofs. Since the geometric factors involved in Theorem~\ref{thm3} have a lower degree, we choose to prove it first.
\begin{proof}[Proof of Theorem~\ref{thm3}]
(1) Let
\begin{equation}\label{eq3.7}
M(R)=\int_0^R(\tan\frac{r}{2})^{2m}\cdot\sin r dr.
\end{equation}
Then we consider the function
\begin{align*}
F(R)&=\eta_{(m)}R=\frac{R\cdot(\tan\dfrac{R}{2})^{2m}\cdot\sin R}{M(R)}.
\end{align*}
 We have
\begin{align*}
(\tan\frac{R}{2})^{-2m}F'(R)=\frac{(2m+\cos R)M(R)-(\tan\dfrac{R}{2})^{2m}\cdot\sin^2 R}{M^2(R)}\cdot R+\frac{\sin R}{M(R)}.
\end{align*}
Next, set
\begin{align*}
J(R)=(2m+\cos R)M(R)-(\tan\frac{R}{2})^{2m}\sin^2R.
\end{align*}
Then we have, by integration by parts,
\begin{align*}
J'(R)&=-\sin R\cdot(M(R)+(\tan\frac{R}{2})^{2m}\cos R)\\
&=-m\sin R\int_0^R \frac{(\tan\frac{r}{2})^{2m-1}\cos r}{\cos^2\frac{r}{2}}dr\\
&=-2m\sin R\int_0^{\tan\frac{R}{2}} \frac{t^{2m-1}(1-t^2)}{1+t^2}dt.
\end{align*}
Therefore, there exists an $R_m^{\circ}>0$, such that $J(R)$ is decreasing on $(0,R_m^{\circ})$ and increasing on $(R_m^{\circ},\pi)$. Next by direct verification we get
\begin{align}
M(R)&=\frac{1}{2(m-1)}(\tan \frac{R}{2})^{2m}\sin^2 R(1+o(1)), \text{ as }R \to \pi-,\quad m\geq 2,\label{M-m2}\\
M(R)&=-4\log \cos \frac{R}{2}+\cos R-1, \quad m=1.\label{M-m1}
\end{align}
Then we may check directly that $J(R)>0$ for $R$ close to $\pi$ and $m\geq 1$. Therefore, there exists an $R_m^{\ast}>0$, such that $J(R)<0$ on $(0,R_m^{\ast})$ and $J(R)>0$ on $(R_m^{\ast},\pi)$. It follows that $F'(R) > 0$ on $(R_m^{\ast},\pi)$. It remains only to prove that $F'(R)>0$ when $R<R_m^{\ast}$.

Next we consider the function
\begin{align*}
F_1(R)&=\frac{F'(R)(M(R))^2}{J(R)(\tan{\dfrac{R}{2}})^{2m}}=R+\frac{M(R)\sin R}{J(R)}.
\end{align*}
Then $F'(R)$ and $F_1(R)$ have the opposite signs on $(0,R_m^{\ast})$. Note
\begin{align*}
M(R)&=\frac{1}{2^{2m+1}(m+1)}R^{2m+2}(1+o(1)), \text{ as }R\to 0,\\
J(R)&=-\frac{1}{2^{2m+1}(m+1)}R^{2m+2}(1+o(1)), \text{ as }R\to 0.
\end{align*}
It follows that
\begin{align*}
F_1(R)=o(R), \text{ as }R\to 0.
\end{align*}
Then $F_1(0)=0$. Moreover, we have
\begin{align*}
    F_1'(R)=1+\frac{\left((\tan{\dfrac{R}{2}})^{2m}\sin^2R+M(R)\cos R\right)J(R)-J'(R)M(R)\sin R}{J^2(R)}.
\end{align*}
Using
\begin{align*}
(\tan\frac{R}{2})^{2m}\sin^2R=(2m+\cos R)M(R)-J(R)
\end{align*}
in the above equality, we get
\begin{align*}
F'_1(R)=\frac{M(R)}{J^2(R)}(2(m+\cos R)J-J'\sin R).
\end{align*}
Set
\begin{align*}
    F_2(R)=\frac{J^2(R)F'_1(R)}{M(R)}=2(m+\cos R)J-J'\sin R.
\end{align*}
By direct computation, we obtain
\begin{align*}
    F_2(R)&=(\cos^2 R+6m\cos R+4m^2+1)M(R)\\
    &\quad -(2m+\cos R)(\tan{\frac{R}{2}})^{2m}\sin^2R.
\end{align*}
Then, it follows that
\begin{align*}
    F_2'(R)=2\sin R\left((\tan{\dfrac{R}{2}})^{2m}\sin^2 R-(3m+\cos R)M(R)\right).
\end{align*}
Next, we define
\begin{align*}
    F_3(R)=\frac{(\tan{\dfrac{R}{2}})^{2m}\sin^2 R}{3m+\cos R}-M(R).
\end{align*}
Then we have
\begin{align*}
    F_3'(R)=\frac{(\tan{\dfrac{R}{2}})^{2m}\sin R}{(3m+\cos R)^2}(2m\cos R+1-3m^2)<0.
\end{align*}
In view of
\begin{align*}
    F_1(0)=0,\quad F_2(0)=0,\quad F_3(0)=0,
\end{align*}
we know $F_1(R)<0$ when $R<R_m^{\ast}$. It follows that $F'(R) > 0$ when $R<R_m^{\ast}$. So we complete the proof.

(2) Define
\begin{align*}
G(R)&=\eta_{(m)}\sin R=\frac{(\tan\dfrac{R}{2})^{2m}\sin^2R}{M(R)}.
\end{align*}
We have
\begin{align*}
G'(R)=(\tan\frac{R}{2})^{2m}\sin R\cdot\frac{(2m+2\cos R)M(R)-(\tan\dfrac{R}{2})^{2m}\sin^2R}{M^2(R)}.
\end{align*}
Set
\begin{align*}
G_1(R)=(2m+2\cos R)M(R)-(\tan\frac{R}{2})^{2m}\sin^2R.
\end{align*}
Then $G_1(0)=0$. In addition, we have
\begin{align*}
G_1'(R)&=-2\sin R\cdot M(R)< 0.
\end{align*}
Therefore, we have $G_1(R)<0$. Then $G'(R)< 0$. It follows that $G(R)$ is strictly decreasing on $(0,\pi)$.

(3) We may check that the function
\begin{align*}
R \mapsto \frac{\tan\frac{R}{2}}{R}
\end{align*}
increases strictly on $(0,\pi)$.
Then note that
\begin{align*}
\eta_{(m)}\tan\frac{R}{2}=\eta_{(m)}R \cdot \frac{\tan\frac{R}{2}}{R}.
\end{align*}
From (1) we complete the proof.

(4) Let
\begin{align*}
Q(R)&=\eta_{(m)}\sin\dfrac{R}{2}=\frac{(\tan\dfrac{R}{2})^{2m}\sin\dfrac{R}{2}\sin R}{M(R)}.
\end{align*}
By direct computation, we have
\begin{align*}
Q'(R)=&\frac{(\tan\dfrac{R}{2})^{2m}\sin\dfrac{R}{2}}{M^2(R)}\\
&\times  \left((2m+\cos R+\cos^2\frac{R}{2})M(R)-(\tan\frac{R}{2})^{2m}\sin^2R\right).
\end{align*}
Set
\begin{align*}
Q_1(R)=M(R)-\frac{2(\tan\frac{R}{2})^{2m}\sin^2R}{K(R)},
\end{align*}
where
\begin{align*}
K(R)=4m+1+3\cos R.
\end{align*}
Notice that $Q_1(0)=0$. In addition, we have
\begin{align*}
Q_1'(R)&=\frac{(\tan\frac{R}{2})^{2m}\sin R}{K^2(R)}(1-\cos R)(4m-5-3\cos R).
\end{align*}

For $m=1$, there exists a unique $R_1^\star>0$, such that $Q_1'(R_1^\star)=0$. Then $Q_1(R)$ is strictly decreasing on $(0,R_1^\star)$ and strictly increasing on $(R_1^\star,\pi)$. From \eqref{M-m1} we know $Q_1(\pi)=+\infty$. Then there exists an $R_1>0$, such that $\eta_{(1)}\sin(R/2)$ decreases strictly on $(0,R_1)$, and increases strictly on $(R_1,\pi)$.

For $m\geq 2$, we have $Q_1'(R)> 0$. Then $Q_1(R)> 0$. It follows that $\eta_{(m)}\sin(R/2)$ increases strictly on $(0,\pi)$.
\end{proof}

Next we give the proof of Theorem~\ref{thm2}.
\begin{proof}[Proof of  Theorem~\ref{thm2}]
(1) We may check that the function
$$R \mapsto \frac{R}{\sin R}$$
is strictly increasing on $(0,\pi)$. On the other hand, we have
\begin{align*}
\xi_{(m)} R^3=m^2\cdot\eta_{(m)} R\cdot\frac{R^2}{\sin^2 R}.
\end{align*}
Then $\xi_{(m)} R^3$ increases strictly on $(0,\pi)$.

(2) Note that
\begin{align*}
\xi_{(m)}\sin^3 R=m^2\eta_{(m)}\sin R.
\end{align*}
Then (2) holds because we have proved in Theorem ~\ref{thm3} (2) that $\eta_{(m)}\sin R$ is strictly decreasing on $(0,\pi)$.

(3) It is not hard to prove that the function
\begin{align*}
R\mapsto\frac{\tan\dfrac{R}{2}}{\sin R}
\end{align*}
is strictly increasing on $(0,\pi)$. Next note that
\begin{align*}
\xi_{(m)}(\tan\frac{R}{2})^3=m^2\eta_{(m)}\tan\frac{R}{2}\cdot\frac{(\tan\dfrac{R}{2})^2}{\sin^2R}.
\end{align*}
We can finish the proof of (3), since we have shown in Theorem \ref{thm3} (3) that $\eta_{(m)}\tan({R}/{2})$ is strictly increasing on $(0,\pi)$.

(4) Set
\begin{align*}
    q(R)&=\xi_{(m)}\sin^3\frac{R}{2}=\frac{m^2(\tan{\dfrac{R}{2}})^{2m+1}\sin{\dfrac{R}{2}}}{2M(R)}.
\end{align*}
Then we have
\begin{align*}
    q'(R)=\frac{m^2(\tan{\dfrac{R}{2}})^{2m+1}}{4\cos{\dfrac{R}{2}}M(R)^2}q_1(R),
\end{align*}
where
\begin{align*}
    q_1(R)=(2m+1+\cos^2{\frac{R}{2}})M(R)-(\tan{\dfrac{R}{2}})^{2m}\sin^2 R.
\end{align*}
Note that $q_1(0)=0$. In addition, we have
\begin{align*}
    q'_1(R)=-\frac{\sin R}{2}\left(M(R)+3(\tan{\dfrac{R}{2}})^{2m}(\cos R-1)\right).
\end{align*}
Set
\begin{align*}
    q_2(R)=M(R)+3(\tan{\dfrac{R}{2}})^{2m}(\cos R-1).
\end{align*}
Then $q_2(0)=0$. Moreover, we have
\begin{align*}
    q'_2(R)&=\frac{(\tan{\dfrac{R}{2}})^{2m}}{\sin R}(2\cos^2 R+6m\cos R-6m-2)<0.
\end{align*}
Therefore, we get $q_2(R) < 0$. Then $q_1(R) > 0$. It follows that $q'(R)>0$, so $q(R)$ increases strictly on $(0,\pi)$.

\end{proof}

\section{Proofs of results in the $2$-dimensional hyperbolic space}\label{sec4}

In the $2$-dimensional hyperbolic space, the following lemma will be required.
\begin{lem}\label{lem3}
In the $2$-dimensional hyperbolic space, we have
\begin{equation}\label{eq4.4}
\xi_{(m)}=\frac{m^2(\tanh\frac{R}{2})^{2m}}{\sinh R\int_0^R(\tanh\frac{r}{2})^{2m}\sinh r dr}, \quad m\geq 1,
\end{equation}
\begin{equation}\label{eq4.5}
\eta_{(m)}=\frac{(\tanh\frac{R}{2})^{2m}\sinh R}{\int_0^R(\tanh\frac{r}{2})^{2m}\sinh r dr}, \quad m\geq 1,
\end{equation}
and
\begin{equation}
\eta_{(0)}=\coth\frac{R}{2}.
\end{equation}
\end{lem}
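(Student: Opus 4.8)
The plan is to follow the proof of Lemma~\ref{lem2} line by line, replacing the trigonometric functions by their hyperbolic analogues. I would begin from the formulas \eqref{eq-xi}, \eqref{eq-eta} and \eqref{eq-eta0} of Lemma~\ref{lem4}, specialized to $n=2$ and $h(r)=\sinh r$, so that $\tau_m=m(n-2+m)=m^2$. Then the whole problem reduces to solving the ODE \eqref{eq-ode}, which in the present setting reads
\[
u_m'' + \frac{\cosh r}{\sinh r}\,u_m' - \frac{m^2}{\sinh^2 r}\,u_m = 0,\qquad u_m(0)=0.
\]

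Next I would introduce the change of variable $t=\tanh(r/2)$. Using $dt/dr=\tfrac12(1-t^2)$ together with $\sinh r = 2t/(1-t^2)$ and $\cosh r = (1+t^2)/(1-t^2)$, a direct computation (the cancellations being identical to those in the spherical case, since $1-\tanh^2=\operatorname{sech}^2$ plays here the role of $1+\tan^2=\sec^2$ there) transforms the equation into the Euler--Cauchy equation
\[
(u_m)_{tt} + \frac{1}{t}(u_m)_t - \frac{m^2}{t^2}\,u_m = 0,
\]
whose general solution is $u_m = C_1 t^m + C_2 t^{-m}$. Regularity at $t=0$ (equivalently $u_m(0)=0$) rules out the $t^{-m}$ term, so $u_m(r) = C(\tanh(r/2))^m$.

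Finally I would substitute $u_m(r) = (\tanh(r/2))^m$ into \eqref{eq-xi} and \eqref{eq-eta}. Since $u_m'(R) = m(\tanh(R/2))^{m}/\sinh R$ and $h(R)=\sinh R$, one obtains at once the claimed expressions \eqref{eq4.4} and \eqref{eq4.5} for $m\geq 1$. For $m=0$, plugging $n=2$ and $h(r)=\sinh r$ into \eqref{eq-eta0} gives $\eta_{(0)} = \sinh R/(\cosh R-1) = \coth(R/2)$. I do not anticipate any real obstacle: the only step requiring a little care is the change-of-variables computation producing the Euler--Cauchy equation, and it goes through exactly as in Lemma~\ref{lem2}.
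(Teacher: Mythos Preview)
Your proposal is correct and follows exactly the paper's approach: specialize the ODE \eqref{eq-ode} to $n=2$, $h=\sinh r$, apply the substitution $t=\tanh(r/2)$ to obtain the Euler--Cauchy equation, solve it, and feed the result into \eqref{eq-xi}, \eqref{eq-eta}, \eqref{eq-eta0}. The paper's proof is in fact terser than yours, simply saying ``arguing as in Lemma~\ref{lem2}''.
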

\begin{proof}[Proof of Lemma~\ref{lem3}]
In the hyperbolic space $\H^2$, for $m\geq 1$ the equation \eqref{eq-ode} becomes
\begin{align*}
u_m''+\frac{u_m'}{\tanh r}-\frac{m^2u_m}{\sinh^2 r}=0.
\end{align*}
Consider the substitution $t=\tanh(r/2)$. Then arguing as in Lemma~\ref{lem2}, we get the desired formulas.
\end{proof}
As in Section~\ref{sec3}, in this section the following computation formula will be used frequently:
\begin{align*}\label{eq4.1}
\frac{d((\tanh\frac{R}{2})^{k})}{dR}=k\frac{(\tanh\frac{R}{2})^{k-1}}{2\cosh^2\frac{R}{2}}=k\frac{(\tanh\frac{R}{2})^{k}}{\sinh R}.
\end{align*}

Now we are in a position to give the proofs. Since the geometric factors involved in Theorem~\ref{thm6} have a lower degree, we choose to prove Theorem~\ref{thm6} first.
\begin{proof}[Proof of Theorem~\ref{thm6}]
(1) Let
\begin{equation}\label{eq 4.6}
M(R)=\int_0^R(\tanh\frac{r}{2})^{2m}\cdot\sinh r dr.
\end{equation}
Considering the function
\begin{align*}
F(R)&=\eta_{(m)}R=\frac{R\cdot(\tanh\frac{R}{2})^{2m}\cdot\sinh R}{M(R)},
\end{align*}
we have
\begin{align*}
(\tanh\frac{R}{2})^{-2m}F'(R)=\frac{(2m+\cosh R)M(R)-(\tanh\frac{R}{2})^{2m}\cdot\sinh^2 R}{M^2(R)}\cdot R+\frac{\sinh R}{M(R)}.
\end{align*}

Next, set
\begin{align*}
J(R)=(2m+\cosh R)M(R)-(\tanh\frac{R}{2})^{2m}\sinh^2R.
\end{align*}
Then we have, by integration by parts,
\begin{align*}
J'(R)&=\sinh R\cdot\left(M(R)-(\tanh\frac{R}{2})^{2m}\cosh R\right)\\
&=-2m\sinh R\int_0^R \frac{(\tanh\frac{r}{2})^{2m}}{\tanh r}dr\\
&<0.
\end{align*}
So we have
\begin{align*}
J(R)< J(0)=0.
\end{align*}
Consider the function
\begin{align*}
F_1(R)=R+\frac{M(R)\sinh R}{J(R)}.
\end{align*}
Then $F_1(R)$ and $F'(R)$ have the opposite signs. In addition, by direct computation we get
\begin{align*}
M(R)&=\frac{1}{2^{2m+1}(m+1)}R^{2m+2}(1+o(1)), \text{ as }R\to 0,\\
J(R)&=-\frac{1}{2^{2m+1}(m+1)}R^{2m+2}(1+o(1)), \text{ as }R\to 0.
\end{align*}
Then we may check directly that
\begin{align*}
F_1(R)=o(R), \text{ as }R\to 0.
\end{align*}
Then $F_1(0)=0$. In addition, we have
\begin{align*}
F_1'(R)=1+\frac{\left((\tanh\frac{R}{2})^{2m}\sinh^2R+M(R)\cosh R\right)J(R)-M(R)J'(R)\sinh R}{J^2(R)}.
\end{align*}
Using
\begin{align*}
(\tanh\frac{R}{2})^{2m}\sinh^2R=(2m+\cosh R)M(R)-J(R)
\end{align*}
in the above equality, we get
\begin{align*}
F_1'(R)=\frac{M(R)}{J^2(R)}\left(2(m+\cosh R)J(R)-J'(R)\sinh R\right).
\end{align*}
Define
\begin{align*}
F_2(R)&=2(m+\cosh R)J(R)-J'(R)\sinh R\\
&=(\cosh^2 R+6m\cosh R+4m^2+1)M(R)\\
&\quad -(2m+\cosh R)(\tanh\frac{R}{2})^{2m}\sinh^2R.
\end{align*}
Then
\begin{align*}
F_2'(R)&=2\sinh R\left((\cosh R+3m)M(R)-(\tanh\frac{R}{2})^{2m}\sinh^2R\right).
\end{align*}
Set
\begin{align*}
F_3(R)=(\cosh R+3m)M(R)-(\tanh\frac{R}{2})^{2m}\sinh^2R.
\end{align*}
We have
\begin{align*}
F_3'(R)&=\sinh R\left(M(R)+m(\tanh\frac{R}{2})^{2m}-(\tanh\frac{R}{2})^{2m}\cosh R\right)\\
                    &=m\sinh R\left((\tanh\frac{R}{2})^{2m}-2\int_0^R\frac{(\tanh\frac{r}{2})^{2m}}{\tanh r}dr \right).
\end{align*}
Let
\begin{align*}
F_4(R)=(\tanh\frac{R}{2})^{2m}-2\int_0^R\frac{(\tanh\frac{r}{2})^{2m}}{\tanh r}dr.
\end{align*}
Then we have
\begin{align*}
F_4'(R)=\frac{2(\tanh\frac{R}{2})^{2m}}{\sinh R}(m-\cosh R).
\end{align*}
For $m=1$, we notice that $F_4'(R)< 0$. Also, note that
\begin{align*}
F_i(0)=0,\quad 1\leq i\leq 4.
\end{align*}
Then $F_1(R)<0$. It follows that $F'(R)>0$. Therefore, we know that $\eta_{(1)} R$ is strictly increasing.

For $m\geq 2$, there exists a unique $R_m^{\star}>0$, such that $F_4(R)$ is strictly increasing on $(0,R_m^{\star})$, and strictly decreasing on $(R_m^\star,+\infty)$. Now using L'H\^{o}pital's rule and the expression for $J'(R)$ we may verify that as $R\to +\infty$,
\begin{align*}
M(R)=\cosh R(1+o(1)), \quad J(R)=-2m R\cosh R(1+o(1)).
\end{align*}
Then using them and the expressions for $F'_i(R)$ ($1\leq i\leq 4$), by L'H\^{o}pital's rule we may check in the following order that  as $R\to +\infty$,
\begin{align*}
F_4(R)&=-2R(1+o(1)),\\
F_3(R)&=-2m R \cosh R (1+o(1)),\\
F_2(R)&=-2m R \cosh^2R (1+o(1)),\\
F_1(R)&=-\frac{1}{2m}\frac{\cosh R}{R}(1+o(1)).
\end{align*}
Then we have
\begin{align*}
F_i(+\infty)=-\infty,\quad 1\leq i\leq 4.
\end{align*}
Also, note
\begin{align*}
F_i(0)=0,\quad 1\leq i\leq 4.
\end{align*}
We know that there exists an $R_m>0$, such that $\eta_{(m)} R$ is strictly decreasing on $(0,R_m)$, and strictly increasing on $(R_m,+\infty)$.

(2) Define
\begin{align*}
G(R)&=\eta_{(m)}\sinh R=\frac{(\tanh\frac{R}{2})^{2m}\sinh^2R}{M(R)}.
\end{align*}
We have
\begin{align*}
G'(R)=(\tanh\frac{R}{2})^{2m}\sinh R\cdot\frac{(2m+2\cosh R)M(R)-(\tanh\frac{R}{2})^{2m}\sinh^2R}{M^2(R)}.
\end{align*}
Set
\begin{align*}G_1(R)=M(R)-\frac{(\tanh\frac{R}{2})^{2m}\sinh^2R}{2m+2\cosh R}.\end{align*}
We know that
\begin{align*}
G_1'(R)&=\frac{(\tanh\frac{R}{2})^{2m}\sinh^3 R}{2(m+\cosh R)^2}> 0.
\end{align*}
In addition, since $G_1(0)=0$, we have $G_1(R)>0$. Then $G'(R)> 0$. It follows that $G(R)$ is strictly increasing on $(0,+\infty)$.

(3) Let
\begin{align*}P(R)&=\eta_{(m)}\tanh\frac{R}{2}=\frac{(\tanh\frac{R}{2})^{2m+1}\sinh R}{M(R)}.
\end{align*}
We have
\begin{align*}
P'(R)=\frac{(\tanh\frac{R}{2})^{2m+1}}{M^2(R)}\cdot\left((2m+1+\cosh R)M(R)-(\tanh\frac{R}{2})^{2m}\sinh^2R\right).
\end{align*}
Set
\begin{align*}
P_1(R)=(2m+1+\cosh R)M(R)-(\tanh\frac{R}{2})^{2m}\sinh^2R.
\end{align*}
Note that $P_1(0)=0$. Furthermore, we have
\begin{align*}
P_1'(R)=\sinh R\left(M(R)+(1-\cosh R)(\tanh\frac{R}{2})^{2m}\right).
\end{align*}
Let
\begin{align*}
P_2(R)=M(R)+(1-\cosh R)(\tanh\frac{R}{2})^{2m}.
\end{align*}
Then $P_2(0)=0$. In addition, we have
\begin{align*}
P_2'(R)&=(1-\cosh R)\cdot\frac{d(\tanh\frac{R}{2})^{2m}}{dR}<0.
\end{align*}
So we have $P_2(R)< 0$. Then $P_1(R)< 0$ and $P'(R)< 0$. Therefore, we know $\eta_{(m)}\tanh(R/2)$ is strictly decreasing on $(0,+\infty)$.

(4) Define
\begin{align*}
Q(R)&=\eta_{(m)}\sinh\frac{R}{2}=\frac{(\tanh\frac{R}{2})^{2m}\sinh R\sinh\frac{R}{2}}{M(R)}.
\end{align*}
Then we have
\begin{align*}
Q'(R)&=\frac{(\tanh\frac{R}{2})^{2m}\sinh\frac{R}{2}}{M^2(R)}\\
&\times  \left((2m+\cosh R+\cosh^2\frac{R}{2})M(R)-(\tanh\frac{R}{2})^{2m}\sinh^2R\right).
\end{align*}
Let
\begin{align*}
Q_1(R)=M(R)-\frac{2(\tanh\frac{R}{2})^{2m}\sinh^2R}{L(R)},
\end{align*}
where
\begin{align*}
L(R)=3\cosh R+4m+1.
\end{align*}
So we have $Q_1(0)=0$. In addition, we have
\begin{align*}
Q_1'(R)&=(\tanh\frac{R}{2})^{2m}\sinh R-\frac{4(m+\cosh R)(\tanh\frac{R}{2})^{2m}\sinh R}{L(R)}\\
&\quad +\frac{6(\tanh\frac{R}{2})^{2m}\sinh^3R}{L^2(R)}\\
&=\frac{(\tanh\frac{R}{2})^{2m}\sinh R}{L^2(R)}\cdot(\cosh R-1)(3\cosh R-4m+5).
\end{align*}

In the case $m\leq 2$, we have
$$3\cosh R-4m+5> 0,$$
 which implies that $Q_1(R)> 0$. Then $Q'(R)> 0$. It follows that $\eta_{(m)}\sinh(R/2)$ is strictly increasing on $(0,+\infty)$.

In the case $m\geq 3$, let $R_m^{\diamond}$ be the unique root of the equation
\begin{align*} 3\cosh R-4m+5=0.\end{align*}
Then we see that $Q_1(R)$ is strictly decreasing on $(0,R_m^{\diamond})$, and strictly increasing on $(R_m^{\diamond},+\infty)$.

Next, note that as $R\to +\infty$, we have
\begin{align*}
&M(R)=\sinh R(1+o(1)),\\
&(\tanh \frac{R}{2})^{2m}=1+o(1),\\
&L(R)=3\sinh R(1+o(1)).
\end{align*}
Therefore we get
\begin{align*}
Q_1(R)&=M(R)-\frac{2(\tanh\frac{R}{2})^{2m}\sinh^2R}{L(R)}\\
&=\frac{1}{3}\sinh R(1+o(1)), \text{ as }R\to +\infty,
\end{align*}
and $Q_1(R)>0$ for sufficiently large $R$.

Consequently, there exists a unique $\bar{R}_m>0$ such that $Q_1(\bar{R}_m)=0$. Then $Q(R)$ is strictly decreasing on $(0,\bar{R
}_m)$, and strictly increasing on $(\bar{R}_m,+\infty)$. Since $\eta_{(m)}\sinh(R/2)$ shares the same monotonicity with $Q(R)$, we finish the proof.
\end{proof}

Last we give the proof of Theorem~\ref{thm5}.
\begin{proof}[Proof of Theorem~\ref{thm5}]

(2) Note that
\begin{align*}\xi_{(m)}\sinh^3 R=m^2\eta_{(m)}\sinh R.\end{align*}
Then (2) holds because we have proved in Theorem~\ref{thm6} (2) that $\eta_{(m)}\sinh R$ is strictly increasing on $(0,+\infty)$.

(4) Let
\begin{align*}
q(R)&=\frac{2\xi_{(m)}\sinh^3\frac{R}{2}}{m^2}=\frac{(\tanh\frac{R}{2})^{2m+1}\sinh\frac{R}{2}}{M(R)}.
\end{align*}
We have
\begin{align*}
q'(R)=\frac{(\tanh\frac{R}{2})^{2m+1}}{2M^2(R)\cosh\frac{R}{2}}\cdot\left((2m+1+\cosh^2\frac{R}{2})M(R)-(\tanh\frac{R}{2})^{2m}\sinh^2R\right).
\end{align*}
Set
\begin{align*}
q_1(R)=M(R)-\frac{2(\tanh\frac{R}{2})^{2m}\sinh^2R}{K(R)},
\end{align*}
where
\begin{align*}
K(R)=\cosh R+4m+3.
\end{align*}
We have $q_1(0)=0$. Note
\begin{align*}
&q_1'(R)=(\tanh\frac{R}{2})^{2m}\sinh R\\
&-\frac{4m(\tanh\frac{R}{2})^{2m}\sinh R+4(\tanh\frac{R}{2})^{2m}\sinh R\cosh R}{K(R)} +\frac{2(\tanh\frac{R}{2})^{2m}\sinh^3R}{K^2(R)}\\
&\qquad \  =\frac{(\tanh\frac{R}{2})^{2m}\sinh R}{K^2(R)}\cdot(1-\cosh R)(\cosh R+12m+7)\\
&\quad\quad  \ < 0.
\end{align*}
Thus we have $q_1(R)< 0$. Then $q'(R)< 0$. Therefore, we see that $\xi_{(m)}\sinh^3(R/2)$ is strictly decreasing on $(0,+\infty)$.

(1) Note that the function
\begin{align*}R\mapsto\frac{R}{\sinh \frac{R}{2}}\end{align*}
is strictly decreasing on $(0,+\infty)$. In Theorem~\ref{thm5} (4) we have proved that $\xi_{(m)} \sinh^3(R/2)$ is strictly decreasing on $(0,+\infty)$. Also, we have
\begin{align*}
\xi_{(m)}R^3=\xi_{(m)}\sinh^3\frac{R}{2}\cdot \frac{R^3}{\sinh^3\frac{R}{2}}.
\end{align*}
Then the conclusion (1) holds.

(3) It is not hard to prove that the function
\begin{align*}
R\mapsto\frac{\tanh\frac{R}{2}}{\sinh \frac{R}{2}}
\end{align*}
is strictly decreasing on $(0,+\infty)$. Next note that
\begin{align*}
\xi_{(m)}\tanh^3\frac{R}{2}=\xi_{(m)}\sinh^3\frac{R}{2}\cdot\frac{\tanh^3\frac{R}{2}}{\sinh^3\frac{R}{2}}.
\end{align*}
We finish the proof of (3), since we have shown in Theorem~\ref{thm5} (4) that $\xi_{(m)} \sinh^3(R/2)$ is strictly decreasing on $(0,+\infty)$.

\end{proof}

\section{Proofs of monotonicity of Steklov eigenvalues on geodesic disks with varying curvature}\label{sec5}

In this section, first we give the proof of Corollary~\ref{thm7}.
\begin{proof}[Proof of Corollary~\ref{thm7}]
(1) First note that the radius $R$ of the geodesic disk of fixed area $A>0$ and curvature $K$ satisfies
\begin{align*}
A=\int_0^R 2\pi h_K(r)dr=
\begin{cases}
4\pi \dfrac{\sin^2 (\frac{\sqrt{K}R}{2})}{K},\quad &\text{if }K>0,\\
\pi R^2,\quad &\text{if }K=0,\\
-4\pi \dfrac{\sinh^2 (\frac{\sqrt{-K}R}{2})}{K},\quad &\text{if }K<0.
\end{cases}
\end{align*}
Hence $R$ can be viewed as a function $R=R(K)$ of the curvature $K$ when $A>0$ is fixed. Then we may consider
\begin{equation*}
\sigma_{(m)}(K;A)=\frac{m}{h_K(R)}=\frac{m}{h_K(R(K))}
\end{equation*}
as a function of $K$ and study its monotonicity with respect to $K\in (-\infty,4\pi/A)$. Next we divide the argument into two cases.

(1.1) The case $K>0$. According to scaling properties for the Steklov problem, we know that
\begin{equation*}
\sigma_{(m)}(K;A)=\sqrt{K}\sigma_{(m)}(1; KA)=\sqrt{4\pi A^{-1}}\sigma_{(m)}(\Theta)\sin\frac{\Theta}{2},
\end{equation*}
where the geodesic radius of the geodesic disk of area $KA$ in $\SS^2$ of constant curvature $+1$,
\begin{align*}
\Theta=2\arcsin \sqrt{\frac{KA}{4\pi}}
\end{align*}
is strictly increasing with respect to $K\in (0,4\pi/A)$.

(1.2) The case $K<0$. According to scaling properties for the Steklov problem, we know that
\begin{equation*}
\sigma_{(m)}(K;A)=\sqrt{-K}\sigma_{(m)}(-1;-KA)=\sqrt{4\pi A^{-1}}\sigma_{(m)}(\Theta)\sinh\frac{\Theta}{2},
\end{equation*}
where the geodesic radius of the geodesic disk of area $-KA$ in $\H^2$ of constant curvature $-1$,
\begin{align*}
\Theta=2\mathrm{arsinh} \sqrt{\frac{-KA}{4\pi}}
\end{align*}
is strictly decreasing with respect to $K\in (-\infty,0)$.

Recall $\sigma_{(m)}(\Theta)=m/\sin \Theta$ or $m/\sinh \Theta$. It is straightforward to check that the conclusion holds. So we complete the proof of (1).

(2) When the radius $\rho>0$ of the geodesic disk of curvature $K$ is fixed, the $m$th eigenvalue $\sigma_{(m)}(K;\rho)$ is given by
\begin{equation*}
\sigma_{(m)}(K;\rho)=\frac{m}{h_K(\rho)},
\end{equation*}
and we may study its monotonicity with respect to $K\in (-\infty,(\pi/\rho)^2)$. Again we divide the argument into two cases.

(2.1) The case $K>0$.
We have
\begin{equation*}
\sigma_{(m)}(K;\rho)=\sqrt{K}\sigma_{(m)}(1;\sqrt{K}\rho)=\rho^{-1}\sigma_{(m)}(\Theta)\Theta,
\end{equation*}
where $\Theta=\sqrt{K}\rho$ is strictly increasing on $(0,4\pi/A)$ with respect to $K$.

(2.2) The case $K<0$. We have
\begin{equation*}
\sigma_{(m)}(K;\rho)=\sqrt{-K}\sigma _{(m)}(-1;\sqrt{-K}\rho)=\rho^{-1}\sigma_{(m)}(\Theta)\Theta,
\end{equation*}
where $\Theta=\sqrt{-K}\rho$ is strictly decreasing on $(-\infty,0)$ with respect to $K$.

Recall $\sigma_{(m)}(\Theta)=m/\sin \Theta$ or $m/\sinh \Theta$. We can finish the proof of~(2).
\end{proof}

Next we give the proof of Corollary~\ref{thm8}.
\begin{proof}[Proof of Corollary~\ref{thm8}]
$(1)$ We divide the argument into two cases.

$(1.1)$ The case $K>0$. According to scaling properties for the Steklov problem, we know that
\begin{equation*}
\xi_{(m)}(K;A)=K^{\frac{3}{2}}\xi_{(m)}(1;KA)=(4\pi A^{-1})^{\frac{3}{2}}\xi_{(m)}(\Theta)\sin^3\frac{\Theta}{2},
\end{equation*}
where $\Theta$ is strictly increasing with respect to $K\in (0,4\pi /A)$ as explained in the proof of Corollary~\ref{thm7}.

$(1.2)$ The case $K<0$. According to scaling properties for the Steklov problem, we know that
\begin{equation*}
\xi_{(m)}(K;A)=(-K)^{\frac{3}{2}}\xi_{(m)}(-1;-KA)=(4\pi A^{-1})^{\frac{3}{2}}\xi_{(m)}(\Theta)\sinh^3\frac{\Theta}{2},
\end{equation*}
where $\Theta$ is strictly decreasing with respect to $K\in (-\infty,0)$ as explained in the proof of Corollary~\ref{thm7}.

Now according to Theorem ~\ref{thm2} (4) and Theorem ~\ref{thm5} (4) we complete the proof of (1).

$(2)$ We divide the argument into two cases.

$(2.1)$ The case $K>0$. According to scaling properties for the Steklov problem, we have
\begin{equation*}
\xi_{(m)}(K;\rho)=K^{\frac{3}{2}}\xi_{(m)}(1;\sqrt{K}\rho)=\rho^{-3}\xi_{(m)}(\Theta)\Theta^3,
\end{equation*}
where $\Theta=\sqrt{K}\rho$ is strictly increasing with respect to $K$.

$(2.2)$ The case $K<0$. We have, according to scaling properties for the Steklov problem,
\begin{equation*}
\xi_{(m)}(K;\rho)=(-K)^{\frac{3}{2}}\xi_{(m)}(1;\sqrt{-K}\rho)=\rho^{-3}\xi_{(m)}(\Theta)\Theta^3,
\end{equation*}
where $\Theta=\sqrt{-K}\rho$ is strictly decreasing with respect to $K$.

Now according to Theorem ~\ref{thm2} (1) and Theorem ~\ref{thm5} (1) we complete the proof of (2).
 \end{proof}

Last we give the proof of Corollary~\ref{thm9}.
\begin{proof}[Proof of Corollary~\ref{thm9}]
$(1)$ We divide the argument into two cases.

$(1.1)$ The case $K>0$. We know that
\begin{equation*}
\eta_{(m)}(K;A)=\sqrt{K}\eta_{(m)}(1;KA)=\sqrt{4\pi A^{-1}}\eta_{(m)}(\Theta)\sin\frac{\Theta}{2},
\end{equation*}
where $\Theta$ is strictly increasing with respect to $K\in (0,4\pi /A)$ as explained in the proof of Corollary~\ref{thm7}.

$(1.2)$ The case $K<0$. We know that
\begin{equation*}
\eta_{(m)}(K;A)=\sqrt{-K}\eta_{(m)}(-1; -KA)=\sqrt{4\pi A^{-1}}\eta_{(m)}(\Theta)\sinh\frac{\Theta}{2},
\end{equation*}
where $\Theta$ is strictly decreasing with respect to $K\in (-\infty,0)$ as explained in the proof of Corollary~\ref{thm7}.

Now according to Theorem ~\ref{thm3} (4) and Theorem ~\ref{thm6} (4) we complete the proof of (1).

$(2)$ We divide the argument into two cases.

$(2.1)$ The case $K>0$. We have
\begin{equation*}
\eta_{(m)}(K;\rho)=\sqrt{K}\eta_{(m)}(1;\sqrt{K}\rho)=\rho^{-1}\eta_{(m)}(\Theta)\Theta,
\end{equation*}
where $\Theta=\sqrt{K}\rho$ is strictly increasing on $(0,4\pi/A)$ with respect to $K$.

$(2.2)$ The case $K<0$. We have
\begin{equation*}
\eta_{(m)}(K;\rho)=\sqrt{-K}\eta_{(m)}(-1;\sqrt{-K}\rho)=\rho^{-1}\eta_{(m)}(\Theta)\Theta,
\end{equation*}
where $\Theta=\sqrt{-K}\rho$ is strictly decreasing on $(-\infty,0)$ with respect to $K$.

Now according to Theorem ~\ref{thm3} (1) and Theorem ~\ref{thm6} (1) we complete the proof of (2).
\end{proof}

\section{Proofs of scaling inequalities for $n\geq 3$}\label{sec6}
First we give the proof of Theorem~\ref{thm 10}.
\begin{proof}[Proof of Theorem~\ref{thm 10}]
Considering the ODE ~\eqref{2nd}, by Proposition~\ref{prop1} we know that the $m$th eigenvalue $\sigma_{(m)}$ without counting multiplicity is given by
\begin{equation}\label{6.1}
 \sigma_{(m)}=\frac{\psi'(R)}{\psi(R)}.
 \end{equation}
Multiplying the equation \eqref{2nd} by $h^{n-1}$ and integrating from $0$ to $R$, we obtain
\begin{equation}\label{6.2}
h^{n-1}\psi'=\tau_m\int_0^Rh^{n-3}\psi dr.
 \end{equation}
Here and below for simplicity we write $h$ for $h(R)$ or $h(r)$ whenever no confusion arises, and the same convention applies to $\psi$. Now combining ~\eqref{6.1} and ~\eqref{6.2} to eliminate $\psi'$, we obtain
\begin{equation*}
\sigma_{(m)}h=\frac{\psi'}{\psi}h=\frac{\tau_m\int_0^Rh^{n-3}\psi dr}{h^{n-2}\psi }.
 \end{equation*}
Let $$F(R)=\frac{\int_0^R h^{n-3}\psi dr}{h^{n-2}\psi}.$$
Then we have
\begin{align*}
F'(R)=\frac{h^{2n-5}\psi^2-\left((n-2)h^{n-3}h'\psi+h^{n-2}\psi'\right)\int_0^Rh^{n-3}\psi dr}{(h^{n-2}\psi)^2}.
\end{align*}
Set
\begin{align*}
G(R)=h^{n-2}\psi^2-\left((n-2)h'\psi+h\psi'\right)\int_0^Rh^{n-3}\psi dr.
\end{align*}
Then $G(0)=0$. In addition, we have
\begin{align*}
G'(R)=h^{n-2}\psi\psi'-\left((n-2)h''\psi+(n-1)h'\psi'+h\psi''\right)\int_0^Rh^{n-3}\psi dr.
\end{align*}
Using
\begin{align*}
h\psi''+(n-1)h'\psi'=\frac{\tau_m\psi}{h}
\end{align*}
and
\begin{align*}
\int_0^R h^{n-3}\psi dr=\frac{h^{n-1}\psi'}{\tau_m},
\end{align*}
we obtain
\begin{align*}
G'(R)&=-\frac{n-2}{\tau_m}h^{n-1}h''\psi \psi'.
\end{align*}

If we are in the hyperbolic space, then $h''=h$ and
\begin{align*}
G'(R)&=-\frac{n-2}{\tau_m}h^n\psi\psi'<0.
\end{align*}
So we have $G(R)< 0$. Then $F'(R)< 0$. It follows that $\sigma_{(m)}\sinh R$ is strictly decreasing on $(0,+\infty)$.

If we are in the sphere, then $h''=-h$ and we can get the conclusion similarly. Thus we complete the proof.

\end{proof}

To make the proofs of Theorem~\ref{thm 11}, Theorem~\ref{thm 14} and Theorem~\ref{thm 12} more concise, we define
\begin{equation*}
y_m=\frac{u_m'}{u_m}.
\end{equation*}
Then one can readily check that $y_m$ satisfies the Riccati differential equation
\begin{equation}\label{eq6.3}
y_m'=-y_m^2-\frac{(n-1)h'}{h}y_m+\frac{\tau_m}{h^2}.
\end{equation}
Also, we define
\begin{equation*}
z_m=hy_m.
\end{equation*}
Then $z_m$ satisfies
\begin{equation}\label{eq-z}
z_m'=-\frac{z_m^2+(n-2)h'z_m-\tau_m}{h}.
\end{equation}

Now we give the proof of Theorem ~\ref{thm 11}.
\begin{proof}[Proof of Theorem ~\ref{thm 11}]
We divide the argument into two cases.

(1) For $m\geq 1$, consider the function
\begin{align*}
F=\eta_{(m)} \sinh R=\frac{h^nu_m^2}{\int_0^R h^{n-1}u_m^2dr}.
\end{align*}
Then we have
\begin{align*}
F'=\frac{h^{n-1}u_m(nh'u_m+2hu_m')}{(\int_0^R h^{n-1}u_m^2dr)^2}\left(\int_0^R h^{n-1}u_m^2dr-\frac{h^nu_m^3}{nh'u_m+2hu_m'}\right).
\end{align*}
Next set
\begin{align*}
G&=\int_0^R h^{n-1}u_m^2dr-\frac{h^nu_m^3}{nh'u_m+2hu_m'}\\
&=\int_0^R h^{n-1}u_m^2dr-\frac{h^nu_m^2}{nh'+2z_m}.
\end{align*}
For later use, we need some computation results for $z_m$ and its derivatives at zero. By direct computation using Equations~\eqref{eq6.3} and~\eqref{eq-z}, we get
\begin{align}\label{eq-z0}
z_m(0)=m,\quad z'_m(0)=0,\quad z''_m(0)=-\frac{(n-2)m}{2m+n}.
\end{align}
Then $G(0)=0$. In addition, we have
\begin{align*}
G'=\frac{h^{n-1}u_m^2}{(nh'+2z_m)^2}\left(nh^2+2\tau_m-2(n-2)h'z_m-2z_m^2\right).
\end{align*}
Now consider the function
\begin{equation}\label{eq6.6}
H=nh^2+2\tau_m-2(n-2)h'z_m-2z_m^2.
\end{equation}
Then we deduce that
\begin{align}\label{eq6.7}
H'
&= \frac{2}{h}\Bigl(
    2z_m^{3} + 3(n-2)h' z_m^{2}+ \left((n-2)^{2}(h')^{2} - (n-2)h^{2} - 2\tau_m\right) z_m \nonumber\\
&\quad
   +n h^{2}h'- (n-2)\tau_m h'
   \Bigr).
\end{align}
Next we claim
\begin{equation*}
H>0,\quad  \forall R>0.
\end{equation*}
The proof is by contradiction. First by using \eqref{eq-z0} to get
\begin{align*}
z_m(R)=m-\frac{(n-2)m}{2(2m+n)}R^2+o(R^2),\text{ as } R\to 0,
\end{align*}
and further using
\begin{align*}
h(R)&=\sinh R=R+o(R^2),\\
h'(R)&=\cosh R=1+\frac{1}{2}R^2+o(R^2),\text{ as } R\to 0,
\end{align*}
we obtain
\begin{align*}
H=\frac{n^2+4m}{2m+n}R^2+o(R^2),\text{ as } R\to 0.
\end{align*}
It follows that there exists an $R_m>0$ such that $H>0$ on $(0,R_m)$.
Suppose the conclusion is not true. Let
\begin{align*}
R_0=\inf\{R\big|H(R)=0\}.
\end{align*}
Then $R_0>0$. Moreover, we have $H'(R_0)\leq 0$. Take $R=R_0$ in \eqref{eq6.6}. Then we obtain
\begin{equation*}
2z_m^2(R_0)=(nh^2+2\tau_m-2(n-2)h'z_m)\big|_{R=R_0}.
\end{equation*}
Substituting this expression into \eqref{eq6.7}, we have
\begin{align*}
H'(R_0)=\frac{2}{h}(2h^2z_m+\frac{n^2}{2}h^2h')\big|_{R=R_0}>0,
\end{align*}
which is a contradiction. Now note that
\begin{align*}
G(0)=H(0)=0.
\end{align*}
One can readily draw the desired conclusion for $m \geq 1$.

(2) For $m=0$, consider the function
\begin{align*}
J=\eta_{(0)}\sinh R=\frac{h^n}{\int_0^R h^{n-1}dr}.
\end{align*}
Then we have
\begin{align*}
J'=\frac{h^{n-1}}{(\int_0^Rh^{n-1}dr)^2}\left(nh'\int_0^R h^{n-1}dr-h^n\right).
\end{align*}
Set
\begin{align*}
K=nh'\int_0^R h^{n-1}dr-h^n.
\end{align*}
Notice that $K(0)=0$. Moreover, we have
\begin{align*}
K'=nh''\int_0^R h^{n-1}dr > 0.
\end{align*}
Then $K> 0.$ In addition, $J'> 0$. It follows that $\eta_{(0)}\sinh R$ increases strictly on $(0,+\infty)$. Thus we complete the proof.
\end{proof}

\section{Proofs of sharp bounds for eigenvalues on warped product manifolds}\label{sec7}

First we give the proof of Theorem ~\ref{thm 14}.

\begin{proof}[Proof of Theorem ~\ref{thm 14}]
First we claim that
\begin{align*}
z_m=hy_m
\end{align*}
is increasing on $(0,R]$. Recall from ~\eqref{eq-z} that
\begin{equation*}
z_m'=-\frac{z_m^2+(n-2)h'z_m-\tau_m}{h}.
\end{equation*}
Set
\begin{align*}
F=z_m^2+(n-2)h'z_m-\tau_m.
\end{align*}
Notice that $F(0)=0$. Then our aim is to show $F\leq 0$. Note that
\begin{align*}
F'&=2z_mz_m'+(n-2)h''z_m+(n-2)h'z_m'\\
  &=-\frac{2z_m+(n-2)h'}{h}F+(n-2)h''z_m.
\end{align*}
It follows that
\begin{align}\label{example}
F'+c(r)F\leq 0,
\end{align}
where $$c(r)=\frac{2z_m+(n-2)h'}{h}.$$
Then we see that, for sufficiently small $\epsilon>0$,
\begin{equation}\label{eq7.2}
(e^{\int_{\epsilon}^r c(t)dt}F)' \leq 0.
\end{equation}
Integrating both sides from $\epsilon$ to $R$ in \eqref{eq7.2}, we have
\begin{align*}
e^{\int_{\epsilon}^R c(t)dt}F(R)\leq F(\epsilon).
\end{align*}
Notice that
\begin{align*}
\int_{\epsilon}^Rc(t)dt\to +\infty, \text{ as } \epsilon\to 0+.
\end{align*}
We have
\begin{align*}
F(R)\leq e^{-\int_{\epsilon}^R c(t)dt}F(\epsilon)\to 0,\text{ as } \epsilon\to 0+,
\end{align*}
which leads to $F(R)\leq 0$. Hence the claim follows.

Next assume $n\geq 4$ and we prove the lower bound for $\xi_{(m)}$. Let
\begin{align*}
G=h^{n+2}(u_m')^2-m^2(n+2m)\int_0^R h^{n-1}u_m^2dr.
\end{align*}
Then $G(0)=0$. In addition, we have
\begin{align*}
G'=h^{n-1} u_m^2\left(-m^2\left(n+2m\right)+2\tau_mz_m-(n-4)h'z_m^2\right).
\end{align*}
Then we consider the function
\begin{equation}\label{eq6.12}
H=-m^2\left(n+2m\right)+2\tau_mz_m-(n-4)h'z_m^2.
\end{equation}
According to ~\eqref{eq-z} we obtain
\begin{equation}\label{eq h'}
h'=\frac{\tau_m-hz_m'-z_m^2}{(n-2)z_m}.
\end{equation}
Substituting this expression into \eqref{eq6.12}, we have
\begin{align*}
H=-m^2(n+2m)+\frac{n}{n-2}\tau_mz_m+\frac{n-4}{n-2}(z_m^3+hz_mz_m').
\end{align*}
Recall $n\geq 4$, and (see Prop.~12 in \cite{Xio21})
\begin{align*}
z_m\geq m, \quad z_m'\geq 0.
\end{align*}
It follows that
\begin{equation}\label{eq6.13}
H\geq H\big|_{z_m=m,z_m'=0}=0.
\end{equation}
Then $G'\geq 0$. Moreover, we have $G\geq 0$. It follows that
\begin{align*}
\xi_{(m)} \geq m^2(n+2m)\dfrac{1}{h^3(R)}.
\end{align*}
In addition, when $H=0$, from ~\eqref{eq6.13} we obtain
\begin{align*}
z_m\equiv m,\quad z_m'\equiv 0.
\end{align*}
Moreover, from ~\eqref{eq h'} we have $h'\equiv1$. Then $h(r)\equiv r$.

Now we note that the argument above can be used to prove the upper bound in ~\eqref{ineq-xi2} when $n=2$. In fact for $n=2$, we have (see Thm.~2 in \cite{Xio21})
\begin{align*}
z_m=\frac{hu_m'}{u_m}=h\sigma_{(m)}=m.
\end{align*}
Then ~\eqref{eq6.12} becomes
\begin{equation}\label{eqH}
H=2m^2(h'-1)\leq0.
\end{equation}
Thus $G'\leq 0$, leading to $G\leq 0$. It follows that
\begin{align*}
\xi_{(m)} \leq m^2(2+2m)\dfrac{1}{h^3(R)}.
\end{align*}
In addition, when $H=0$, from ~\eqref{eqH} we have $h'\equiv1$. Then $h(r)\equiv r$.

Next we consider the case $n\leq 3$ and prove the lower bound for $\xi_{(m)}$. Set
\begin{align*}
I=\frac{h^{n+2}(u_m')^2}{h'}-m^2(n+2m)\int_0^R h^{n-1}u_m^2 dr.
\end{align*}
Then $I(0)=0$. In addition, we have
\begin{align*}
I'&=h^{n-1} u_m^2 \left(-\left(n-4\right)z_m^2+\frac{2\tau_mz_m}{h'}-m^2\left(n+2m\right)\right)-\frac{h^{n+2}(u_m')^2}{(h')^2}h''\\
  &\geq h^{n-1} u_m^2\left(-\left(n-4\right)z_m^2+\frac{2\tau_mz_m}{h'}-m^2\left(n+2m\right)\right)\\
  & \geq h^{n-1} u_m^2\left(-\left(n-4\right)m^2+2m\tau_m-m^2\left(n+2m\right)\right)\\
  &= 0.
\end{align*}
Then $I\geq 0$, which implies that
\begin{align*}
\xi_{(m)} \geq m^2(n+2m)\dfrac{h'(R)}{h^3(R)}.
\end{align*}
In addition, when $I'=0$, we can check that $h''\equiv 0$ and $h'\equiv 1$. It follows that $h(r)\equiv r.$ Hence the proof is completed.
\end{proof}
\begin{rem}
In the proof of Theorem~\ref{thm 14}, we showed that $z_m$ is increasing on $(0,R]$. Moreover, if we replace the condition $h''\leq0$ and $h'\leq 1$ by $h''\geq 0$ and $h'\geq 1$, then we may prove that $z_m$ decreases on $(0,R]$. Note $z_m=hu_m'/u_m=h\sigma_{(m)}$. Hence the above conclusions contain Theorem ~\ref{thm 10} (where $h(r)=\sin r$ or $h(r)=\sinh r$) and the argument gives an alternative proof for it.
\end{rem}
Then we give the proof of Theorem~\ref{thm 12}.
\begin{proof}[Proof of Theorem~\ref{thm 12}]
We divide the argument into two cases.

(1) Let $m\geq 1$. First we prove the upper bound in \eqref{ineq-eta}. Set
\begin{align*}
F(R)=h^nu_m^2-(n+2m)\int_0^R h^{n-1}u_m^2dr.
\end{align*}
Note that $F(0)=0$. In addition, we have
\begin{align*}
F'(R)=nh^{n-1}h'u_m^2+2h^nu_mu_m'-(n+2m)h^{n-1}u_m^2.
\end{align*}
Let
\begin{align*}
G(R)=nh'u_m+2hu_m'-(n+2m)u_m.
\end{align*}
Notice that $G(0)=0$. Moreover, we have
\begin{align*}
G'(R) &= nh''u_m + nh'u_m' + 2h'u_m' + 2hu_m'' - (n+2m)u_m' \\[3pt]
& \le \left(- (n-4)h' - (n+2m)\right)u_m' + \frac{2\tau_m}{h}u_m \\[3pt]
&= -\left((n-4)h' + (n+2m)\right)\frac{G}{2h} \\[2pt]
&\quad -\left((n+2m) - nh'\right)\left((n+2m) + (n-4)h'\right)\frac{u_m}{2h}
   + \frac{2\tau_m}{h}u_m.
\end{align*}
Now we claim
\begin{align*}
I:=4\tau_m-\left((n+2m) - nh'\right)\left((n+2m) + (n-4)h'\right)\leq0.
\end{align*}
Note that
\begin{align*}
I=-8m-n^2+4(n+2m)h'+n(n-4)(h')^2.
\end{align*}
Whether $n\geq 4$ or $2\leq n\leq 3$, we have
\begin{equation}\label{eq I}
I \le I\big|_{h'=1}=0.
\end{equation}
By the claim,
\begin{equation}\label{ineq G}
G'+c_1(r)G\leq 0,
\end{equation}
where $$c_1(r)=\frac{(n-4)h'+(n+2m)}{2h}.$$
Using the same argument as in the treatment of ~\eqref{example}, we obtain from ~\eqref{ineq G} that $G(R)\leq 0$. Then $F(R)\leq 0$. It follows that
\begin{align*}
\eta_{(m)} \leq (n+2m)\frac{1}{h}.
\end{align*}
In addition, when $G=0$, from ~\eqref{eq I} we have $h' \equiv 1$. Then $h(r) \equiv r$.

Next we prove the lower bound in \eqref{ineq-eta}. Set
\begin{align*}
H(R)=\frac{h^nu_m^2}{h'}-(n+2m)\int_0^R h^{n-1}u_m^2 dr.
\end{align*}
Note that $H(0)=0$. In addition, we have
\begin{align*}
H'(R)&=\frac{nh^{n-1}h'u_m^2+2h^nu_mu_m'}{h'}-\frac{h^nu_m^2}{(h')^2}h''-(n+2m)h^{n-1}u_m^2\\
        &\geq -2mh^{n-1}u_m^2+\frac{2h^nu_mu_m'}{h'}.
\end{align*}
By Prop.~12 in \cite{Xio21} we know that
\begin{equation}\label{=}
\frac{u_m'}{u_m}\geq m\frac{h'}{h}.
\end{equation}
It follows that
\begin{align*}
H'(R)\geq -2mh^{n-1}u_m^2+2mh^{n-1}u_m^2=0.
\end{align*}
Then $H(R)\geq 0$, which implies that
\begin{align*}
\eta_{(m)}\geq (n+2m)\frac{h'}{h}.
\end{align*}
Moreover, when $H'=0$, from ~\eqref{=} we obtain $z_m=mh'$. Since
\begin{align*}
z_m\geq m,\quad h'\leq 1,
\end{align*}
it follows that $h'\equiv 1$. Then $h(r)\equiv r$.

Last we prove \eqref{ineq-eta-ratio}. Set
\begin{align*}
K(R)=(n+2m)\frac{u_{m+1}^2}{u_m^2}\int_0^R h^{n-1}u_{m}^2 dr-(n+2m+2)\int_0^R h^{n-1}u_{m+1}^2 dr.
\end{align*}
Note that $K(0)=0$. In addition, we have
\begin{align*}
K'(R)
&= (n+2m)
   \frac{
      2u_m^2 u_{m+1}' u_{m+1}- 2u_m u_{m+1}^2 u_m'
   }{u_m^4}
   \int_0^R h^{n-1}u_m^2\,dr  \\[4pt]
&\quad -\, 2h^{n-1}u_{m+1}^2.
\end{align*}
Let
\begin{align*}
L(R)&=(n+2m)\int_0^R h^{n-1}u_m^2 dr-\frac{h^{n-1}u_m^3u_{m+1}}{u_mu_{m+1}'-u_{m+1}u_m'}\\
&=(n+2m)\int_0^R h^{n-1}u_m^2 dr-\frac{h^{n-1}u_m^2}{\Delta y},
\end{align*}
where $\Delta y:=y_{m+1}-y_m$.

Since
\begin{align*}
\Delta y=y_{m+1}-y_m=\sigma_{m+1}-\sigma_m,
\end{align*}
by Prop.~12 in \cite{Xio21} we know that
\begin{align*}
\Delta y \geq \frac{1}{h} >0.
\end{align*}
Then $L(R)$ and $K'(R)$ share the same sign.

Next notice that
\begin{align*}
h\Delta y|_{R=0}&=z_{m+1}(0)-z_{m}(0)=1,
\end{align*}
implying $L(0)=0$. Then we may check
\begin{align*}
L'(R)=\frac{h^{n-1}u_m^2}{\Delta y}\left((n+2m-1)\Delta y+\frac{n+2m-1}{h^2 \Delta y}-\frac{2(n-1)h'}{h}-4y_m\right).
\end{align*}
Define
\begin{align*}
M(R)=(n+2m-1)\Delta y+\frac{n+2m-1}{h^2 \Delta y}-\frac{2(n-1)h'}{h}-4y_m.
\end{align*}
By the arithmetic--geometric mean inequality we have
\begin{align*}
M(R)\geq \frac{2(2m+n-1)}{h}-\frac{2(n-1)h'}{h}-4y_m.
\end{align*}
Set
\begin{align*}
N(R)=(2m+n-1)u_m-(n-1)h'u_m-2hu_m'.
\end{align*}
Since
\begin{align*}
h''\leq 0, \quad u_m''=\frac{\tau_m}{h^2}u_m-\frac{(n-1)h'}{h}u_m',
\end{align*}
we have
\begin{align*}
N'(R)
&= (2m+n-1)u_m'-(n-1)h''u_m-(n-1)h'u_m'-2h'u_m'-2hu_m'' \\[3pt]
&\geq \left((2m+n-1)+(n-3)h'\right)u_m' - \frac{2\tau_m}{h}u_m \\[3pt]
&= \left(
    (2m+n-1)^2
    - 2(2m+n-1)h'
    - (n-1)(n-3)(h')^2 \right. \\
&\left.
    \quad - 4m(m+n-2)
  \right)\frac{u_m}{2h}
  - \frac{(2m+n-1)+(n-3)h'}{2h}N.
\end{align*}
Let
\begin{align*}
J=&(2m+n-1)^2-2(2m+n-1)h'\\
&-(n-1)(n-3)(h')^2-4m(m+n-2).
\end{align*}
Then we have
\begin{equation}\label{eqJ}
J \geq J\big|_{h'=1}=0.
\end{equation}
It follows that
\begin{equation}\label{example'}
N'+c_2(r)N\geq0,
\end{equation}
where
\begin{align*}
c_2(r)=\frac{(2m+n-1)+(n-3)h'}{2h}.
\end{align*}
Using the same argument as in the treatment of ~\eqref{example}, we obtain from ~\eqref{example'} that $N(R)\geq 0$. Then $M(R)\geq 0$. Moreover, we have $L(R)\geq 0$. It follows that $ K(R)\geq 0$. Therefore, we have
\begin{align*}
\frac{\eta_{(m+1)}}{\eta_{(m)}} \geq \frac{n+2m+2}{n+2m}.
\end{align*}
In addition, when $ N'=0$, from ~\eqref{eqJ} we have $h'\equiv 1$. Then $h(r)\equiv r$. So we complete the proof for $m \geq 1$.

(2) For $m=0$, first consider the function
\begin{align*}
P=h^n-nh'\int_0^R h^{n-1}dr.
\end{align*}
Then $P(0)=0$. In addition, we have
\begin{align*}
P'=-nh''\int_0^R h^{n-1}dr \geq 0.
\end{align*}
So we have $P\geq0$. It follows that
\begin{align*}
\eta_{(0)}\geq \frac{nh'}{h}.
\end{align*}
Moreover, when $P'=0$, we have $h''\equiv 0$. Then $h(r)\equiv r$.

Next, we set
\begin{align*}
Q=h^n-n\int_0^R h^{n-1}dr.
\end{align*}
Then $Q(0)=0$. In addition, we have
\begin{align*}
Q'=nh^{n-1}(h'-1)\leq 0.
\end{align*}
So we have $Q\leq 0$. It follows that
\begin{align*}
\eta_{(0)} \leq \frac{n}{h}.
\end{align*}
Moreover, when $Q'=0$, we have $h'\equiv 1$. It follows that $h(r)\equiv r$.

Last set
\begin{align*}
U=nu_1^2\int_0^R h^{n-1}dr-(n+2)\int_0^R h^{n-1}u_1^2dr.
\end{align*}
Then $U(0)=0$. In addition, we have
\begin{align*}
U'=2nu_1u_1'\int_0^R h^{n-1}dr-2h^{n-1}u_1^2.
\end{align*}
Now consider the function
\begin{align*}
V=n\int_0^R h^{n-1}dr-\frac{h^{n-1}u_1}{u_1'}=n\int_0^R h^{n-1}dr-\frac{h^n}{z_1}.
\end{align*}
Then $V(0)=0$. Moreover, from ~\eqref{eq-z} we have
\begin{align*}
V'&=nh^{n-1}-\frac{nh^{n-1}h'z_1-h^nz_1'}{z_1^2}\\
&=\frac{(n-1)h^{n-1}}{z_1^2}(z_1^2-2h'z_1+1)\\
&\geq \frac{(n-1)h^{n-1}}{z_1^2}(z_1^2-2z_1+1)\\
&=\frac{(n-1)h^{n-1}}{z_1^2}(z_1-1)^2\\
&\geq 0,
\end{align*}
where the first inequality is due to $h'\leq 1$. So we have $V\geq 0$. Then $U\geq 0$. It follows that
\begin{align*}
\frac{\eta_{(1)}}{\eta_{(0)}}\geq\frac{n+2}{n}.
\end{align*}
In addition, when $V'=0$, we have $h'\equiv 1$. It follows that $h(r)\equiv r$. So we complete the proof.
\end{proof}

\section{Further problems}\label{sec8}
Finally, we would like to pose several open problems that remain unresolved in this work.
\begin{ques}
\begin{enumerate}
\item In the hyperbolic space $\H^n$ $(n \geq 3)$, for the fourth-order Steklov eigenvalue $\eta_{(m)}$, can we get the monotonicity of the quantities $\eta_{(m)} R$, $\eta_{(m)}\tanh(R/2)$ and $\eta_{(m)}\sinh(R/2)$?
\item Under the assumptions as in Theorem~\ref{thm 14}, for $n=3$, is it possible to determine whether the inequality $$\xi_{(m)}\geq m^2(n+2m)\frac{1}{h^3}$$ or $$\xi_{(m)}\leq m^2(n+2m)\frac{1}{h^3}$$ holds?
\item Under the assumptions as in Theorem~\ref{thm 14}, can we establish the ratio estimate for $\xi_{(m+1)}/\xi_{(m)}$?
\end{enumerate}
\end{ques}

\bibliographystyle{Plain}

\end{document}